\newcommand{\link}[1]{\href{http://#1}{\nolinkurl{#1}}}
\theoremstyle{plain}
\newtheorem{theorem}{Theorem}[section]
\newtheorem{lemma}[theorem]{Lemma}
\newtheorem{proposition}[theorem]{Proposition}
\newtheorem{corollary}[theorem]{Corollary}
\newtheorem*{Theorem}{Theorem}
\theoremstyle{definition}
\newtheorem{definition}[theorem]{Definition}
\theoremstyle{remark}
\newtheorem{remark}[theorem]{Remark}
\newtheorem{example}[theorem]{Example}
\newtheorem*{Question}{Question}
\newtheorem*{Example}{Example}
\numberwithin{equation}{section}
\newcommand{\sref}[1]{\textsection\ref{#1}}
\newtheoremstyle{anno}%
{0.5\baselineskip}
{0.5\baselineskip}
{\upshape\footnotesize}
{0pt}
{\bfseries\footnotesize}
{.}{0.5em}{}
\theoremstyle{anno}
\newtheorem{annotation}{Annotation}[section]
\newenvironment{mylist}[1][70pt]{%
  \begin{list}{}{
      \setlength{\labelwidth}{#1}
      \setlength{\labelsep}{5pt}
      \setlength{\leftmargin}{17pt+#1}
      }}%
  {\end{list}}
\newcommand{\NN}{\mathbb{N}}
\newcommand{\ZZ}{\mathbb{Z}}
\newcommand{\QQ}{\mathbb{Q}}
\newcommand{\RR}{\mathbb{R}}
\newcommand{\CC}{\mathbb{C}}
\newcommand{\R}{\mathbf{R}}
\newcommand{\C}{\mathbf{C}}
\newcommand{\K}{\mathbf{K}}
\newcommand{\F}{\mathbf{F}}
\newcommand{\X}{\mathscr{X}}
\newcommand{\Y}{\mathscr{Y}}
\newcommand{\Z}{\mathscr{Z}}
\renewcommand{\S}{\mathbf{S}}
\renewcommand{\P}{\mathbf{P}}
\newcommand{\abs}[1]{\lvert#1\rvert}
\newcommand{\sign}{\operatorname{sign}}
\newcommand{\half}{{\textstyle\frac{1}{2}}}
\newcommand{\Int}{\operatorname{Int}}
\newcommand{\dd}{\,\mathrm{d}}
\newcommand{\disc}{\operatorname{disc}}
\newcommand{\id}{\operatorname{id}}
\newcommand{\minus}{\smallsetminus}
\newcommand{\onto}{\mathrel{\makebox[1pt][l]{$\to$}{\to}}}
\newcommand{\isoto}{\mathrel{\xrightarrow{_\sim}}}
\newcommand{\card}{\operatorname{\#}}
\newcommand{\mcard}{\operatorname{\underset{mult}{\#}}}
\newcommand{\lc}{\operatorname{lc}}
\newcommand{\re}{\operatorname{re}}
\newcommand{\im}{\operatorname{im}}
\newcommand{\Ind}{\operatorname{Ind}}
\newcommand{\Sturm}{\operatorname{Sturm}}
\newcommand{\Routh}{\operatorname{Routh}}
\newcommand{\Wind}{w}
\newcommand{\cind}[2]{\Wind({#2}|{#1})}
\newcommand{\mind}{\overline{\Wind}}
\newcommand{\ie}[2]{\mathopen[ #1, #2 \mathclose[}
\newcommand{\ei}[2]{\mathopen] #1, #2 \mathclose]}
\newcommand{\ee}[2]{\mathopen] #1, #2 \mathclose[}
\begin{document} 


\title[The Fundamental Theorem of Algebra: a real-algebraic proof]
      {The Fundamental Theorem of Algebra made effective: 
        \\ an elementary real-algebraic proof via Sturm chains}
      
\author{Michael Eisermann}
\address{Institut f\"ur Geometrie und Topologie, Universit\"at Stuttgart, Germany}
\email{Michael.Eisermann@mathematik.uni-stuttgart.de}
\urladdr{www.igt.uni-stuttgart.de/eiserm}

\date{first version March 2008; this version compiled \today}


\keywords{Constructive and algorithmic aspects of the fundamental theorem of algebra, 
  algebraic winding number, Sturm chains and Cauchy index, Sturm--Cauchy root-finding algorithm.} 

\subjclass[2010]{
  12D10; 
  26C10, 
  30C15, 
  65H04, 
  65G20
}




\maketitle

\begin{abstract}
  Sturm's theorem (1829/35) provides an elegant algorithm 
  to count and locate the real roots of any real polynomial.
  In his residue calculus (1831/37) 
  Cauchy extended Sturm's method to count and locate 
  the complex roots of any complex polynomial.
  For holomorphic functions Cauchy's index is based on contour integration,
  but in the special case of polynomials it can effectively be calculated 
  via Sturm chains using euclidean division as in the real case.
  In this way we provide an algebraic proof of Cauchy's theorem 
  for polynomials over any real closed field.
  As our main tool, we formalize Gauss' geometric notion 
  of winding number (1799) in the real-algebraic setting,
  from which we derive a real-algebraic proof of the Fundamental Theorem of Algebra.
  The proof is elementary inasmuch as it uses only 
  the intermediate value theorem and arithmetic of real polynomials.
  It can thus be formulated in the first-order language of real closed fields.
  Moreover, the proof is constructive and immediately 
  translates to an algebraic root-finding algorithm.
\end{abstract}

\begin{center}
  \footnotesize \it \selectlanguage{french} 
  L'alg\`ebre est g\'en\'ereuse, elle donne souvent plus qu'on lui
  demande. \rm (Jean le Rond d'Alembert)%
  \footnote{ \selectlanguage{english} 
    This quotation is folklore \cite[p.\,285]{Kasner:1905}.
    It corresponds, though not verbatim, to d'Alembert's article \textit{\'Equation} 
    in the \textit{Encyclop\'edie} (1751--1765, tome 5, p.\,850):
    \selectlanguage{french} 
    ``[L'alg\`ebre] r\'epond non seulement \`a ce qu'on lui demande, 
    mais encore \`a ce qu'on ne lui demandoit pas, et qu'on ne songeoit pas \`a lui demander.''
    The portraits of Gauss and Cauchy are taken from Wikimedia Commons,
    the portrait of Sturm is from Loria's biography \cite{Loria:1938}.}
\end{center}

\begin{figure}[h]
  \begin{minipage}{0.32\linewidth}\centering
    \includegraphics[width=0.9\linewidth]{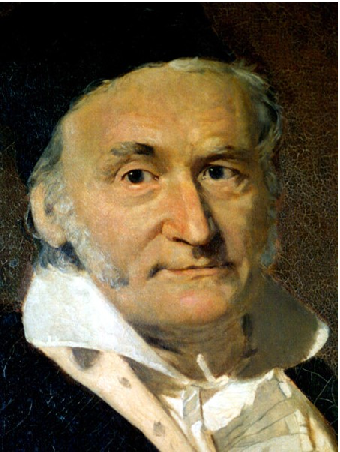}
    \\ \footnotesize Carl Friedrich Gau{\ss} \\ (1777--1855)
  \end{minipage}
  \hfill
  \begin{minipage}{0.32\linewidth}\centering
    \includegraphics[width=0.9\linewidth]{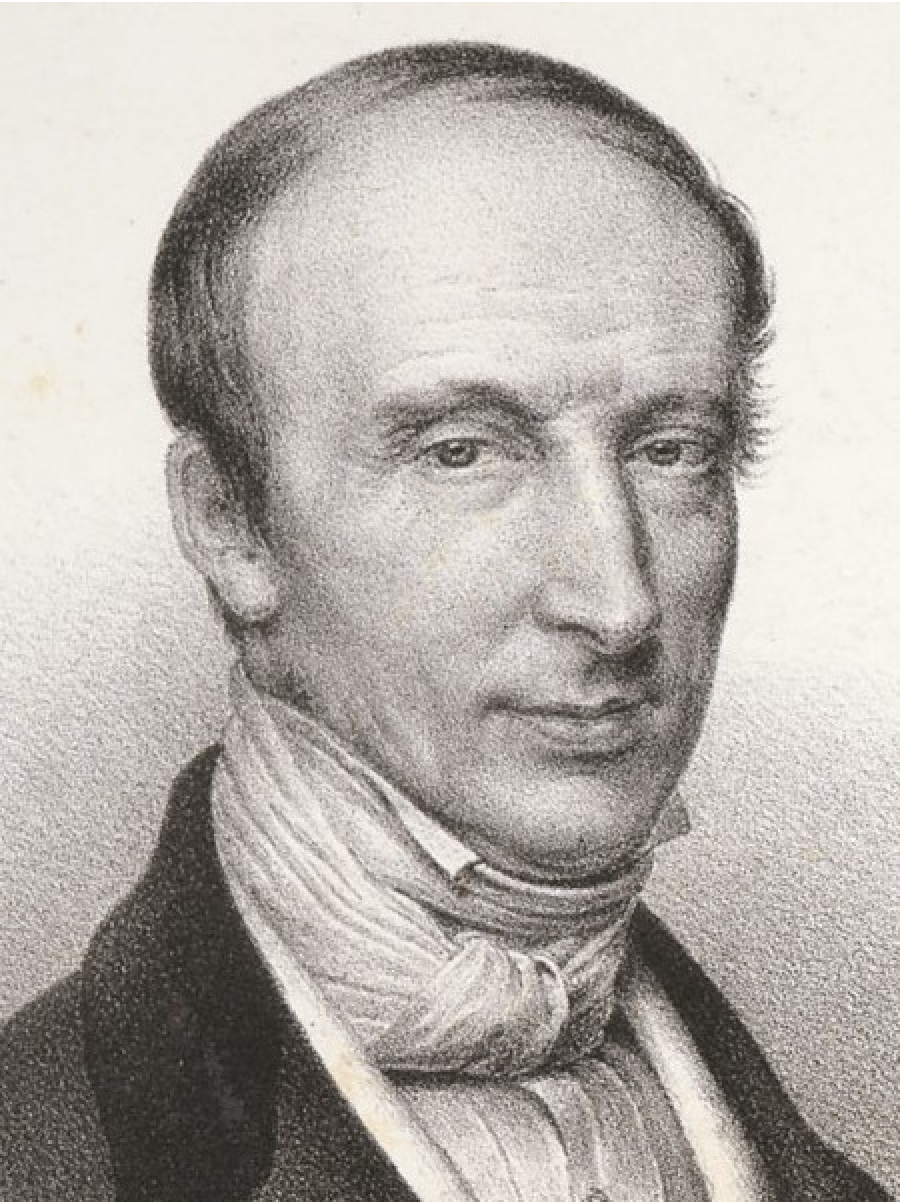}
    \\ \footnotesize Augustin Louis Cauchy \\ (1789--1857)
  \end{minipage}
  \hfill
  \begin{minipage}{0.32\linewidth}\centering
    \includegraphics[width=0.9\linewidth]{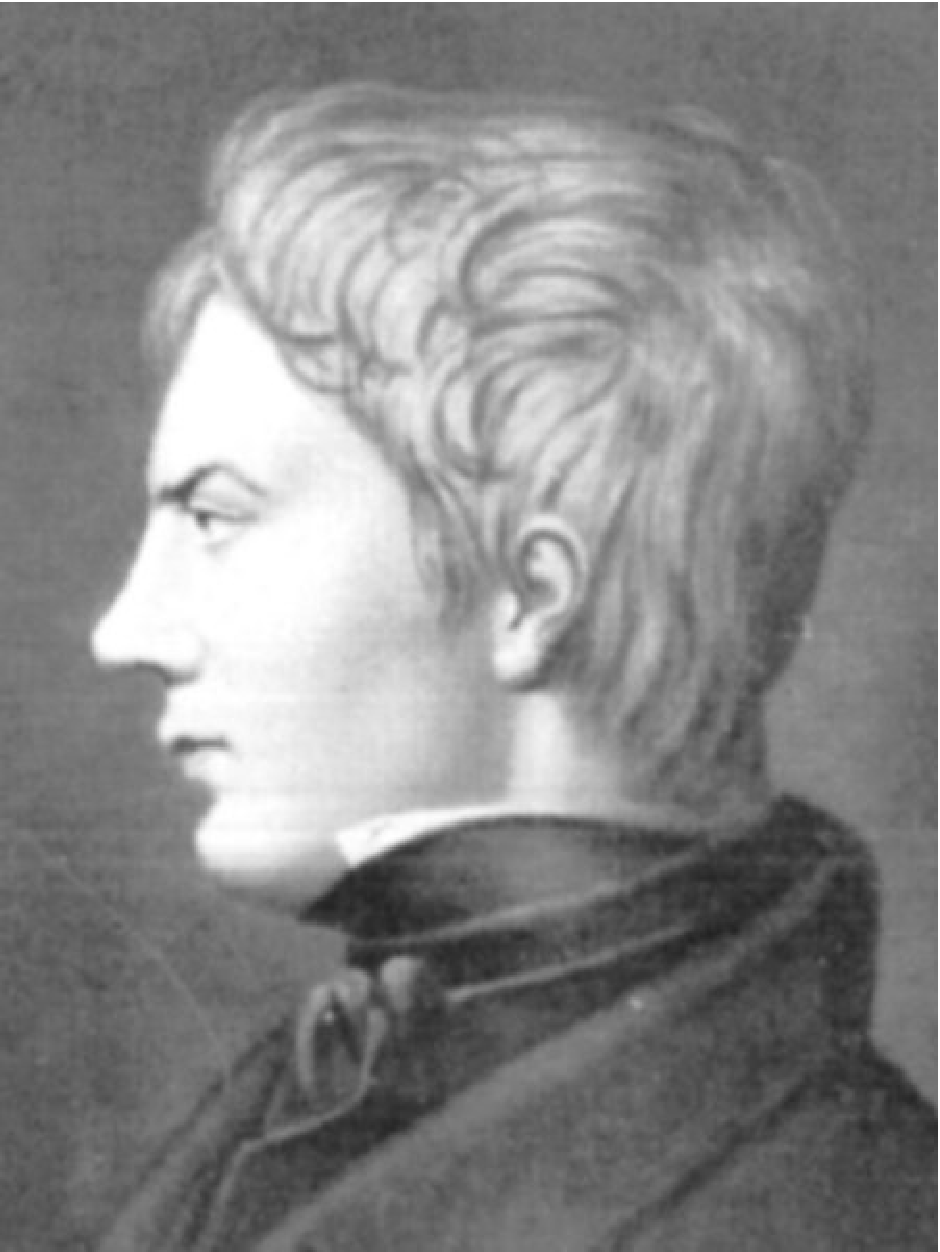}
    \\ \footnotesize Charles-Fran\c{c}ois Sturm \\ (1803--1855)
  \end{minipage}
\end{figure}











\section{Introduction and statement of results} \label{sec:Introduction}

\subsection{Historical origins} \label{sub:HistoricalOrigins}

Sturm's theorem \cite{Sturm:1829,Sturm:1835}, 
announced in 1829 and published in 1835,
provides an elegant and ingeniously simple algorithm 
to determine for each real polynomial $P \in \RR[X]$ the number 
of its real roots in any given interval $[x_0,x_1] \subset \RR$. 
Sturm's breakthrough solved an outstanding problem 
of his time and earned him instant fame.

In his residue calculus, 
outlined in 1831 and fully developed in 1837, 
Cauchy \cite{Cauchy:1831,Cauchy:1837} extended Sturm's method 
to determine for each complex polynomial $F \in \CC[Z]$ 
the number of its complex roots in a given domain, 
say in any rectangle of the form $[x_0,x_1] \times [y_0,y_1] \subset \CC$, 
where we identify $\CC$ with $\RR^2$ in the usual way.
For holomorphic functions Cauchy's index is based on contour integration,
but in the special case of polynomials it can effectively be calculated 
via Sturm chains using euclidean division as in the real case.

Combining Sturm's real algorithm and Cauchy's complex approach, we provide 
an algebraic proof of Cauchy's theorem for polynomials over any real closed field.
As our main tool, we formalize Gauss' geometric notion 
of winding number in real-algebraic language.
This leads to a real-algebraic proof of the Fundamental Theorem of Algebra,
assuring that every nonconstant complex polynomial has at least one complex zero.
Since zeros split off as linear factors, this is equivalent 
to the following extensive formulation.

\begin{theorem}[Fundamental Theorem of Algebra, existence only]
  For every polynomial \[ F = Z^n + c_1 Z^{n-1} + \dots + c_{n-1} Z + c_n \]
  with complex coefficients $c_1,\dots,c_{n-1},c_n \in \CC$
  there exist $z_1,z_2,\dots,z_n \in \CC$ 
  such that \[ F = (Z-z_1)(Z-z_2)\cdots(Z-z_n). \]
\end{theorem}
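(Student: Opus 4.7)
The plan is to produce at least one root $z_1 \in \CC$ of $F$ and then induct on the degree $n$: once $F(z_1)=0$ is known, polynomial division yields $F=(Z-z_1)\,G$ with $\deg G=n-1$, and the induction hypothesis applied to $G$ furnishes the remaining factors $Z-z_2,\dots,Z-z_n$. The base case $n\le 1$ is trivial. So the whole problem reduces to existence of one root for $n\ge 1$, which is the essential content of the theorem.

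For existence, the strategy is to algebraicize Gauss's geometric argument using the algebraic winding number developed earlier in the paper. By Cauchy's theorem, the number of roots of $F$ inside a rectangle $\Gamma\subset\CC$, counted with multiplicity, equals an algebraic winding number $w(F,\partial\Gamma)$ defined via Cauchy indices of rational functions formed from the real and imaginary parts of $F$ along each of the four edges. Thus it suffices to exhibit a single rectangle on which $w(F,\partial\Gamma)\neq 0$. The natural choice is a large square $\Gamma_R=[-R,R]\times[-R,R]$.

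On $\partial\Gamma_R$ one compares $F(Z)$ with its leading term $Z^n$. First I would fix $R$, depending explicitly on $|c_0|,\dots,|c_{n-1}|$ (for example $R>1+\max_k|c_k|$), so that on $\partial\Gamma_R$ the estimate $|F(Z)-Z^n|<|Z|^n$ holds, forcing $F(Z)$ and $Z^n$ never to be antipodal relative to the origin. A real-algebraic homotopy invariance lemma for $w$, applied to the straight-line homotopy $(1-t)Z^n+tF(Z)$, then yields $w(F,\partial\Gamma_R)=w(Z^n,\partial\Gamma_R)$. Next I would compute $w(Z^n,\partial\Gamma_R)=n$ directly in real-algebraic terms, by expanding $(X+iY)^n$ into its real and imaginary polynomials and summing the Cauchy indices along the four edges. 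Since $n\ge 1$, this forces at least one root inside $\Gamma_R$.

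The main obstacle is that nothing here may be invoked as geometric or analytic intuition; every ingredient must be expressed and proved through Cauchy indices of real polynomials over an arbitrary real closed field. Concretely, the delicate points I expect to be the hardest are (i) a clean real-algebraic homotopy invariance statement for $w(F,\partial\Gamma)$ under deformations whose image avoids $0$ on $\partial\Gamma$, and (ii) the explicit evaluation $w(Z^n,\partial\Gamma_R)=n$, which boils down to a careful sign-change count in a family of explicit rational functions. Once these pieces are in place, the induction and the estimate $R>1+\max_k|c_k|$ are mechanical, and the whole proof remains within the first-order language of real closed fields, hence both constructive and algorithmic as promised in the abstract.
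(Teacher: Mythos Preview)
Your proposal is correct and follows essentially the same architecture as the paper: algebraic winding number via Cauchy indices, Cauchy's root-counting theorem, homotopy to $Z^n$ on a large square with $R>1+\max_k|c_k|$, and the evaluation $w(Z^n,\partial\Gamma_R)=n$. Two remarks on execution: the paper obtains $w(Z^n,\partial\Gamma_R)=n$ not by a direct expansion of $(X+iY)^n$ and edge-by-edge sign counting, but far more cleanly via multiplicativity of the index together with the explicit degree-one computation $w(Z-z_0,\partial\Gamma)=1$; and the step you flag as hardest, homotopy invariance, is in the paper a short corollary of the deeper ``no zeros in $\Gamma$ implies index zero'' theorem, which is the genuine technical core (proved via two-variable Sturm chains, replacing compactness) and which also underlies the Cauchy root-counting theorem you invoke as given.
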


Numerous proofs of this important theorem have been published over 
the last two centuries.  According to the tools used, they can 
be grouped into three families (\sref{sec:HistoricalRemarks}):
\begin{enumerate}
\item \label{strategy:analysis}
  Analysis, using compactness, integration, transcendental functions, etc.;
\item\label{strategy:algebra}
  Algebra, using polynomials and the intermediate value theorem;
\item \label{strategy:topology}
  Algebraic topology, using some form of the winding number.
\end{enumerate}

There are proofs for every taste and each has its merits.
From a more ambitious, constructive viewpoint, however,
a mere existence proof only ``announces the presence of a treasure, 
without divulging its location'', as Hermann Weyl put it.
``It is not the existence theorem that is valuable,
but the construction carried out in its proof.''%
\footnote{ \selectlanguage{german}
  ``Bezeichne ich Erkenntnis als einen wertvollen Schatz,
  so ist das Urteilsabstrakt ein Papier, welches das Vorhandensein 
  eines Schatzes anzeigt, ohne jedoch zu verraten, an welchem Ort.''
  \cite[p.\,54]{Weyl:1921}
  ``Nicht das Existenztheorem ist das Wertvolle, 
  sondern die im Beweise gef\"uhrte Konstruktion.''
  \cite[p.\,55]{Weyl:1921} }

The real-algebraic approach presented here is situated 
between \eqref{strategy:algebra} and \eqref{strategy:topology}.
It combines algebraic computation (Cauchy's index and Sturm's algorithm)
with geometric reasoning (Gauss' notion of winding number) 
and therefore enjoys some remarkable features.
\begin{itemize}
\item It uses only the intermediate value theorem and arithmetic of real polynomials.
\item It is elementary, in the colloquial as well as the formal sense of first-order logic.
\item All arguments and constructions hold verbatim over every real closed field.
\item The proof is constructive and immediately translates to a root-finding algorithm.
\item The algorithm is easy to implement, and reasonably efficient in moderate degree.
\item It can be formalized to a computer-verifiable proof (of theorem \emph{and} algorithm).
\end{itemize}

The logical structure of such a proof was already outlined 
by Sturm \cite{Sturm:1836} in 1836, but his article lacks 
the elegance and perfection of his famous 1835 m\'emoire.
This may explain why his sketch found little resonance, was not further 
worked out, and became forgotten by the end of the 19th century. 
The aim of the present article is to save the real-algebraic proof 
from oblivion and to develop Sturm's idea in due rigour.
The presentation is intended for non-experts and thus 
contains much introductory and expository material.


\subsection{The algebraic winding number} 

Our arguments work over every ordered field $\R$ 
that satisfies the intermediate value property for polynomials,
i.e., a \emph{real closed field} (\sref{sec:RealClosedFields}).
We choose this starting point as the axiomatic 
foundation of Sturm's theorem (\sref{sec:Sturm}).
We then deduce that the field $\C = \R[i]$ with $i^2 = -1$ is algebraically closed, 
which was first proven by Artin and Schreier \cite{ArtinSchreier:1926,ArtinSchreier:1927}.
Moreover, we construct the algebraic winding number and establish 
an algorithm to locate the zeros of any given polynomial $F \in \C[Z]^*$.
(Here for every ring $A$, we denote by $A^* = A \minus \{0\}$ the set of its nonzero elements.)

The geometric idea is very intuitive: the winding number $\Wind(\gamma)$ 
counts the number of turns that a loop $\gamma \colon [0,1] \to \C^*$ 
performs around $0$. 
Theorem \ref{thm:AlgebraicWindingNumber} turns the geometric idea into a rigorous 
algebraic construction and provides an effective computation. 

\begin{figure}[ht]
  \centering
  \includegraphics[height=25ex]{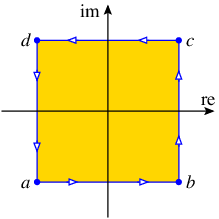} 
  \quad
  \includegraphics[height=25ex]{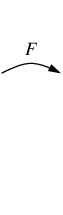} 
  \quad
  \includegraphics[height=25ex]{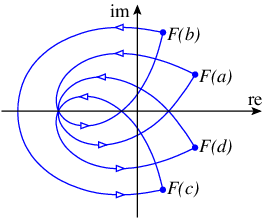} 
  \caption{The winding number $\cind{\partial\Gamma}{F}$ of a polynomial $F \in \C[Z]$ along the boundary
    of a rectangle $\Gamma \subset \C$.  In this example $\cind{\partial\Gamma}{F} = 2$.}
  \label{fig:WindingNumber}
\end{figure}

In order to work algebraically, a \emph{loop} $\gamma$ 
will be understood to be a piecewise polynomial map from the interval 
$[0,1] = \{\, x \in \R \mid 0 \le x \le 1 \,\}$ to $\C^*$ 
such that $\gamma(0) = \gamma(1)$; see \sref{sub:AlgebraicWindingNumber}.  
Likewise, a \emph{homotopy} between loops will be required to be 
piecewise polynomial, as explained in \sref{sub:HomotopyInvariance}.  
We can now formulate our main result.

\begin{theorem}[algebraic winding number] \label{thm:AlgebraicWindingNumber}
  Consider an ordered field $\R$ and its extension $\C = \R[i]$ where $i^2 = -1$.
  Let $\Omega$ be the set of piecewise polynomial loops $\gamma \colon [0,1] \to \C^*$.
  We define the \emph{algebraic winding number} $\Wind \colon \Omega \to \ZZ$ 
  by the following algebraic property:
  \begin{enumerate}
    \setcounter{enumi}{-1}
    \renewcommand{\theenumi}{W\arabic{enumi}}
  \item \label{property:AlgebraicComputation} Computation:
    $\Wind(\gamma)$ equals half the Cauchy index of $\frac{\re\gamma}{\im\gamma}$,
    recalled in \sref{sec:Sturm}, and can thus be calculated by Sturm's algorithm
    via iterated euclidean division. 
  \end{enumerate}
  If $\R$ is real closed, then $\Wind$ enjoys the following geometric properties:
  \begin{enumerate}
    \renewcommand{\theenumi}{W\arabic{enumi}}
  \item \label{property:Normalization} Normalization:
    Let $\Gamma \subset \C$ be a rectangle of the form $\Gamma = [x_0,x_1] \times [y_0,y_1]$.
    If $\gamma$ parametrizes the boundary $\partial\Gamma \subset \C^*$,
    positively oriented as in Figure \ref{fig:WindingNumber} (left), then
    \[
    \Wind( \gamma ) = \begin{cases} 
      1 & \text{if $0 \in \Int\Gamma$,} \\
      0 & \text{if $0 \in \C\minus\Gamma$.} 
    \end{cases}
    \]
  \item \label{property:Multiplicativity} Multiplicativity:
    For all $\gamma_1,\gamma_2 \in \Omega$ we have 
    \[
    \Wind( \gamma_1 \cdot \gamma_2 ) = \Wind( \gamma_1) + \Wind( \gamma_2 ) .
    \]
  \item \label{property:HomotopyInvariance} Homotopy invariance: 
    For all $\gamma_0,\gamma_1 \in \Omega$ we have
    \[
    \Wind( \gamma_0 ) = \Wind( \gamma_1 )
    \quad \text{whenever $\gamma_0$ and $\gamma_1$ are homotopic in $\C^*$.}
    \]
  \end{enumerate}
  Conversely, if over some ordered field $\R$ there exists a map 
  $\Wind \colon \Omega \to \ZZ$ satisfying properties \eqref{property:Normalization},
  \eqref{property:Multiplicativity}, \eqref{property:HomotopyInvariance},
  then $\R$ is real closed and $\Wind$ can be calculated as in \eqref{property:AlgebraicComputation}
\end{theorem}

\begin{remark} \label{rem:WindingNumberConstruction}
  Since polynomials form the simplest function algebra and can immediately be used for computations,
  Theorem \ref{thm:AlgebraicWindingNumber} has both practical and theoretical relevance.
  Over the real numbers $\RR$, the Stone-Weierstrass theorem can be used to extend
  the winding number 
  to continuous loops and homotopies, such that the geometric properties 
  \eqref{property:Normalization}, \eqref{property:Multiplicativity}, 
  \eqref{property:HomotopyInvariance} continue to hold.
  Several alternative constructions over $\RR$ lead to this result:
  \begin{enumerate}
  \item \label{alt:FundamentalGroup}
    Fundamental group, $\Wind \colon \pi_1(\CC^*,1) \isoto \ZZ$ via the Seifert--van\,Kampen theorem,
  \item \label{alt:CoveringTheory}
    Covering theory, $\exp \colon \CC \onto \CC^*$ with monodromy $\Wind \colon \pi_1(\CC^*,1) \isoto \ZZ$,
  \item \label{alt:Homology}
    Homology, $\Wind \colon H_1(\CC^*) \isoto \ZZ$ via the Eilenberg--Steenrod axioms,
  \item \label{alt:ComplexIntegration}
    Complex analysis, analytic winding number $\Wind(\gamma) = \frac{1}{2 \pi i}\int_\gamma \frac{\dd z}{z}$ via integration.
  \end{enumerate}

  Each of these approaches relies on some characteristic property 
  of the field $\RR$ of real numbers, such as metric completeness or some equivalent,
  and therefore does not extend to any other real closed field.
  In this article we develop an independent algebraic proof using only polynomial arithmetic,
  avoiding compactness, integrals, covering spaces, etc.

  We remark that constructions \eqref{alt:FundamentalGroup} 
  and \eqref{alt:CoveringTheory} are dual via Galois correspondence,
  while their abelian counterparts \eqref{alt:Homology} and 
  \eqref{alt:ComplexIntegration} are dual via the homology-cohomology 
  pairing. 
  The real-algebraic approach 
  appears to be self-dual, as expressed in Theorem \ref{thm:AlgebraicWindingNumber} 
  by the equivalence of the algebraic computation \eqref{property:AlgebraicComputation} 
  with the geometric properties \eqref{property:Normalization},
  \eqref{property:Multiplicativity}, and \eqref{property:HomotopyInvariance}.
  This dual nature conjugates 
  real-algebraic geometry and effective algebraic topology.
\end{remark}  

\begin{remark} \label{rem:WindingNumberSubtleties}
  The algebraic winding number turns out to be 
  slightly more general than stated in the theorem.
  The algebraic definition \eqref{property:AlgebraicComputation} 
  of $\Wind(\gamma)$ also applies to loops $\gamma$ that pass through $0$.  
  Normalization \eqref{property:Normalization} extends to 
  $\Wind(\gamma) = \nicefrac{1}{2}$ if $0$ lies in an edge of $\Gamma$,
  and $\Wind(\gamma) = \nicefrac{1}{4}$ if $0$ is one of the vertices of $\Gamma$.
  Multiplicativity \eqref{property:Multiplicativity} continues to hold 
  provided that $0$ is not a vertex of $\gamma_1$ or $\gamma_2$.
  Homotopy invariance \eqref{property:HomotopyInvariance} applies only to loops in $\C^*$. 
\end{remark}

\subsection{Counting complex roots} \label{sub:FTAbyWindingNumber}

For the rest of this introduction, $\R$ denotes a real closed field 
and $\C = \R[i]$ its complex extension. 
From Theorem \ref{thm:AlgebraicWindingNumber} we can deduce 
the Fundamental Theorem of Algebra using the geometric properties \eqref{property:Normalization}, 
\eqref{property:Multiplicativity}, \eqref{property:HomotopyInvariance} as follows.

As the first step (\sref{sec:Cauchy}) we obtain the following algebraic version of Cauchy's theorem.
We write $\cind{\partial\Gamma}{F}$ as a short-hand for $\Wind(F \circ \gamma)$
where $\gamma$ parametrizes $\partial\Gamma$ as in Figure \ref{fig:WindingNumber}.

\begin{theorem}[local winding number] \label{thm:RootCounting}
  If $F \in \C[Z]$ does not vanish at any of the four vertices 
  of the rectangle $\Gamma \subset \C$, then the algebraic winding number 
  $\cind{\partial\Gamma}{F}$ equals the number of roots of $F$ in $\Gamma$.
  Here each root in the interior of $\Gamma$ is counted with its multiplicity,
  whereas each root in an edge of $\Gamma$ is counted with half its multiplicity.
\end{theorem}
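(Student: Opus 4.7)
My plan is to reduce Theorem~\ref{thm:RootCounting} to a combined use of Multiplicativity~\eqref{property:Multiplicativity}, Normalization~\eqref{property:Normalization} and Homotopy invariance~\eqref{property:HomotopyInvariance} from Theorem~\ref{thm:IndexConstruction}. The guiding idea is to peel off the roots of $F$ in $\Gamma$ as linear factors whose index is read off from Normalization, while the remaining factor, which has no root in $\Gamma$ at all, has trivial index by a shrinking homotopy argument.

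Concretely, I would begin by enumerating the pairwise distinct roots $z_1,\dots,z_k\in\Gamma$ of $F$ with their multiplicities $m_1,\dots,m_k\in\NN$, extracted by iterated euclidean division in $\C[Z]$. This yields a factorization $F = (Z-z_1)^{m_1}\cdots(Z-z_k)^{m_k}\,G$ in which $G\in\C[Z]$ has no root in $\Gamma$. By hypothesis no vertex of $\Gamma$ is a root of $F$, hence no $z_i$ is a vertex, and in turn $G$ too is non-zero at each vertex, since dividing by $Z-z_i$ preserves non-vanishing at any point distinct from $z_i$. Multiplicativity~\eqref{property:Multiplicativity} then gives
\[
  \cind{\partial\Gamma}{F}
  \;=\; \sum_{i=1}^{k} m_i\,\cind{\partial\Gamma}{Z-z_i} \;+\; \cind{\partial\Gamma}{G}.
\]
For each linear factor, the loop $(Z-z_i)\circ\partial\Gamma$ parametrizes the positively oriented boundary of the translated rectangle $\Gamma-z_i$, so Normalization~\eqref{property:Normalization} (in the extended form noted just after Theorem~\ref{thm:IndexConstruction}) yields $\cind{\partial\Gamma}{Z-z_i}=1$ if $z_i\in\Int\Gamma$ and $\cind{\partial\Gamma}{Z-z_i}=\tfrac{1}{2}$ if $z_i$ lies in an open edge of $\Gamma$. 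Weighted by $m_i$, this is precisely the count announced in the statement.

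The substantial step is then the vanishing $\cind{\partial\Gamma}{G}=0$ for $G$ without any root in $\Gamma$. Here I would invoke Homotopy invariance~\eqref{property:HomotopyInvariance}, using the shrinking homotopy centred at the midpoint $c$ of $\Gamma$: set $\Gamma_s = c+(1-s)(\Gamma-c)$ and define $H(t,s) = G\bigl(c+(1-s)(\gamma(t)-c)\bigr)$, where $\gamma\colon[0,1]\to\partial\Gamma$ is the standard piecewise linear parametrization of the boundary. The map $H$ is piecewise polynomial in both $t$ and $s$, and because each $\Gamma_s$ is contained in $\Gamma$ while $G$ is rootless there, $H$ takes values in $\C^*$. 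At $s=1$ it collapses to the constant loop $G(c)$, whose algebraic index is $0$ by direct computation via~\eqref{property:AlgebraicCalculation}.

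The main obstacle lies precisely in this last argument: carrying it out strictly inside the real-algebraic framework demands that $H$ be genuinely piecewise polynomial in \emph{both} parameters and that it avoid $0$ on the entire square $[0,1]^2$. The first point follows because $H$ is polynomial on each of the four ``face strips'' of the homotopy cylinder; the second from the convexity of $\Gamma$ together with the rootlessness of $G$ on $\Gamma$. A secondary but important technical matter is to carry out the root enumeration purely algebraically, so that the vertex hypothesis on $F$ propagates to $G$ without invoking the algebraic closure of $\C$, which is precisely what we are in the process of proving.
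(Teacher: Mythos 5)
Your proposal is correct, and its overall structure (peel off linear factors, read each contribution from Normalization, show the residual factor has index zero) mirrors the paper's proof of Theorem~\ref{thm:CountingComplexRoots}. The one genuine divergence is in how you dispose of the residual factor $G$. The paper factors $F = (Z-z_1)\cdots(Z-z_m)\,G$ with $G$ having no roots anywhere in $\C$, then invokes Theorem~\ref{thm:IndexZero} (the index vanishes in the absence of zeros on $\Gamma$) directly to conclude $\cind{\partial\Gamma}{G}=0$; you instead extract only the roots lying in $\Gamma$ and handle the residual $G$ via a shrinking homotopy to the constant $G(c)$, appealing to homotopy invariance~\eqref{property:HomotopyInvariance}. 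Both routes are sound, and your observation that only rootlessness on $\Gamma$ (rather than on all of $\C$) is needed is a mild sharpening. But note two points. First, in the paper's development Theorem~\ref{thm:HomotopyInvariance} is established \emph{after} Theorem~\ref{thm:CountingComplexRoots}, so your route would require reordering the proofs (not circular, since the homotopy invariance proof does not use root counting, but a reordering nonetheless). Second, and more to the point, homotopy invariance is itself deduced from Theorem~\ref{thm:IndexZero}, so your detour through it is a longer path to the same underlying result; the direct citation of Theorem~\ref{thm:IndexZero} is cleaner. Your care in verifying that the shrinking homotopy stays in $\C^*$ and is polynomial in both parameters is exactly right, as is your remark that the vertex hypothesis propagates from $F$ to $G$ and that no appeal to algebraic closure of $\C$ is made.
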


To prove this, consider $F = (Z-z_1) \cdots (Z-z_m) G$ with 
$z_1,\dots,z_m \in \Gamma$ such that $G$ has no zeros in $\Gamma$.  
For $a \in \Gamma$ the homotopy $G_t = G( a + t(Z-a) )$ deforms $G_1 = G$
to $G_0 = G(a)$, whence homotopy invariance \eqref{property:HomotopyInvariance} 
implies that $\cind{\partial\Gamma}{G_1} = \cind{\partial\Gamma}{G_0} = 0$.
The theorem then follows from multiplicativity \eqref{property:Multiplicativity}
and normalization \eqref{property:Normalization} as in Remark \ref{rem:WindingNumberSubtleties}.

\begin{example} \label{exm:SmallPolynomial}
  Figure \ref{fig:WindingNumber} displays the situation for 
  $F = Z^5 - 5 Z^4 - 2 Z^3 - 2 Z^2 - 3 Z - 12$ and $\Gamma = [-1,+1]^2$.
  Here the winding number is $\cind{\partial\Gamma}{F} = 2$.  
  This is in accordance with the approximate location of zeros:
  $\Gamma$ contains $z_{1,2} \approx -0.9 \pm 0.76i$ whereas 
  $z_{3,4} \approx 0.67 \pm 1.06i$ and $z_5 \approx 5.46$ lie outside of $\Gamma$.
\end{example}

The hypothesis that $F$ does not vanish at any of the vertices of $\Gamma$ 
is very mild and easy to check in every concrete application.
Unlike Cauchy's integral formula $\Wind(\gamma) = \frac{1}{2 \pi i}\int_\gamma \frac{\dd z}{z}$, 
the algebraic winding number behaves well if zeros lie on (or close to) the boundary,
and the uniform treatment of all configurations of roots simplifies 
theoretical arguments and practical implementations alike.
This is yet another manifestation of the oft-quoted wisdom of d'Alembert 
that \emph{Algebra is generous, she often gives more than we ask of her}.

As the second step (\sref{sec:FTA}) we formalize Gauss' geometric argument (1799)
saying that $F \approx Z^n$ outside of a sufficiently big rectangle $\Gamma \subset \C$,
whence $F | \partial\Gamma$ has winding number $n$.

\begin{theorem}[global winding number] \label{thm:GlobalWindingNumber}
  For each polynomial $F = Z^n + c_1 Z^{n-1} + \dots + c_n$ in $\C[Z]$,
  we define its Cauchy radius to be $\rho_F := 1 + \max\{\abs{c_1},\dots,\abs{c_n}\}$.
  Then $F$ satisfies $\cind{\partial\Gamma}{F} = n$ on every rectangle $\Gamma$ 
  containing the Cauchy disk $B(\rho_F) = \{\, z \in \C \mid \abs{z} < \rho_F \,\}$.
\end{theorem}

The proof uses the homotopy $F_t = Z^n + t(c_1 Z^{n-1} + \dots + c_n)$ 
to deform $F_1 = F$ to $F_0 = Z^n$.  All zeros of $F_t$ lie in $B(\rho_F)$.
The hypothesis $\Gamma \supset B(\rho_F)$ ensures that $F_t$ has no zeros on $\partial\Gamma$, 
so homotopy invariance \eqref{property:HomotopyInvariance} allows us to conclude 
that $\cind{\partial\Gamma}{F_1} = \cind{\partial\Gamma}{F_0} = n$.

Theorems \ref{thm:RootCounting} and \ref{thm:GlobalWindingNumber} 
imply that $\C$ is algebraically closed.  Each polynomial $F \in \C[Z]$ of degree $n$ 
has $n$ roots in $\C$, more precisely in the square $\Gamma = [-\rho_F,\rho_F]^2 \subset \C$. 
(The latter is only a coarse estimate and can be improved for practical purposes; see Remark \ref{rem:CauchyPolynomial}.)


\subsection{The Fundamental Theorem of Algebra made effective}

The winding number proves more than mere existence of roots:
it also establishes a root-finding algorithm (\sref{sub:GlobalRootFinding}).
Here we have to assume that the ordered field $\R$ is archimedean, which amounts to $\R \subset \RR$.


\begin{theorem}[Fundamental Theorem of Algebra, effective version] \label{thm:RootLocation}
  For every complex polynomial $F = Z^n + c_1 Z^{n-1} + \dots + c_n$ in $\CC[Z]$
  there exist complex roots $z_1,\dots,z_n \in \CC$ such that $F = (Z-z_1)\cdots(Z-z_n)$
  and the algebraic winding number provides an algorithm to locate them.
  Starting from some rectangle containing all $n$ roots, as in Theorem \ref{thm:GlobalWindingNumber}, 
  we can subdivide and keep only those rectangles that 
  actually contain roots, using Theorem \ref{thm:RootCounting}.
  All computations can be carried out using Sturm chains according to 
  Theorem \ref{thm:AlgebraicWindingNumber}. 
  By iterated bisection we can thus approximate all roots to any desired precision.
\end{theorem}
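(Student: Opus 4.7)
The plan is to bootstrap the existence of a factorization from the mere existence of one root, and then to turn the root-counting theorem into a bisection algorithm that locates every root simultaneously.

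For the factorization, I would apply Theorem~\ref{thm:IndexDegree} to the square $\Gamma_0 = [-\rho_F,\rho_F]^2$, which contains $B(\rho_F)$ and whose four vertices lie at distance $\sqrt{2}\rho_F$ from the origin. A standard Cauchy-bound argument (showing $|F(z)| > 0$ whenever $|z| \ge \rho_F$) verifies that no root lies on $\partial\Gamma_0$, so the vertex hypothesis of Theorem~\ref{thm:RootCounting} holds; combining the two theorems, $\Gamma_0$ contains at least one root $z_1$ of $F$. Polynomial division in $\CC[Z]$ yields $F = (Z-z_1) F_1$ with $\deg F_1 = n-1$, and an induction on $n$ (with base $n=1$ trivial) gives the promised factorization $F = c(Z-z_1)\cdots(Z-z_n)$, where $c = c_n$ is the leading coefficient.

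For the algorithm I would iterate the following step. Starting from $\Gamma_0$, maintain a finite collection of rectangles whose union still contains all roots. At each stage subdivide each surviving rectangle into four congruent sub-rectangles by bisecting both sides, compute the algebraic index of $F$ along each sub-boundary using the Sturm-chain procedure of Theorem~\ref{thm:IndexConstruction}\eqref{property:AlgebraicCalculation}, and discard any sub-rectangle whose index is zero. By Theorem~\ref{thm:RootCounting} a discarded rectangle contains no root (neither in its interior nor on an edge); roots lying on an edge shared by two retained sub-rectangles are bookkept consistently via the half-multiplicity convention, and since each interior edge is shared by exactly two sub-rectangles the total count adds up to the correct integer. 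After $k$ bisection rounds the surviving rectangles have side length $\rho_F/2^{k-1}$, so for any prescribed $\varepsilon > 0$ only finitely many steps are needed to enclose each root in a rectangle of diameter less than $\varepsilon$.

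The main obstacle is the vertex hypothesis of Theorem~\ref{thm:RootCounting}: after subdivision a newly created corner might happen to coincide with a root of $F$, and then the algebraic index along the adjacent sub-boundaries is no longer governed by the clean root-counting statement. The remedy is to bisect slightly off-center whenever necessary: since $F$ has only finitely many roots and the split abscissa (resp.\ ordinate) is an arbitrary real parameter, any sufficiently generic choice avoids them. Constructively, one evaluates $F$ at each candidate vertex and, if a vanishing occurs, perturbs the splitting coordinate by a small rational amount; a finite search suffices because $F$ has at most $n$ roots on any given horizontal or vertical line. This keeps the side lengths shrinking at a geometric rate, and with the perturbation in place all three cited theorems apply unchanged, certifying both termination and correctness of the root-localization procedure.
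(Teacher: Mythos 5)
Your proposal is correct, but the algorithmic half takes a genuinely different route from the paper's. To avoid violating the vertex hypothesis of Theorem~\ref{thm:RootCounting} after subdivision, you perturb each bisection coordinate by a small rational amount, invoking the finiteness of the root set of $F$ to guarantee that a generic choice works. The paper (\sref{sub:GlobalRootFinding}) instead \emph{embraces} the degenerate configurations: it maintains a list of pairwise disjoint \emph{open} cells of dimensions $0$, $1$, and $2$, and when a $2$-cell is quartered, the new center point and the four cross-arm segments are tracked as separate $0$- and $1$-cells. A root that would land on a new vertex or interior edge is thus captured in a lower-dimensional cell rather than dodged. This exploits precisely the feature the paper labors to establish in \sref{sec:Sturm}--\sref{sec:Cauchy}, namely that the algebraic index is defined even when zeros lie on the contour and assigns them weight $\tfrac12$ on an edge and $\tfrac14$ at a vertex; your perturbation scheme foregoes that generosity in favor of a more conventional general-position argument. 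The paper also preprocesses by replacing $F$ with $F/\gcd(F,F')$ so that all roots are simple, which keeps the cell bookkeeping tidy. Your scheme does work, but you should say explicitly that the perturbation is bounded (say, by a quarter of the current side length) so that the cells still shrink at a guaranteed geometric rate; you only gesture at this. For the factorization you peel off one root at a time by induction on the degree, whereas the paper observes directly that Theorems~\ref{thm:RootCounting} and~\ref{thm:IndexDegree} together say a large square contains exactly $\deg F$ roots counted with multiplicity, from which the full factorization is immediate; both routes are fine.
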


\begin{remark}[computability]
  In the real-algebraic setting of this article 
  we consider the field operations $(a,b) \mapsto a+b$, 
  $a \mapsto -a$, $(a,b) \mapsto a \cdot b$, $a \mapsto a^{-1}$ 
  and the comparisons $a = b$, $a < b$ as primitive operations.
  Over the real numbers $\RR$, this point of view was advanced 
  by Blum--Cucker--Shub--Smale \cite{SmaleEtAl:1998} by 
  postulating a hypothetical \emph{real number machine}.

  In order to implement the required real-algebraic operations on a Turing machine, 
  however, a more careful analysis is necessary (\sref{sub:TuringComputability}).
  Given $F = c_0 Z^n +  c_1 Z^{n-1} + \dots + c_n$  we have to assume that 
  the operations of the ordered field $\QQ(\re(c_0),\im(c_0),\dots,\re(c_n),\im(c_n))$ 
  are computable in the Turing sense (\sref{sub:GlobalRootFinding}).
  This is the case for the field $\QQ$ of rational numbers, for example, 
  or every real-algebraic number field $\QQ(\alpha) \subset \RR$.
\end{remark}

\begin{remark}[complexity]
  On a Turing machine we can compare time requirements by measuring bit-complexity.
  The above Sturm--Cauchy method requires $\tilde{O}( n^4 b^2 )$ bit-operations
  to approximate all $n$ roots to a precision of $b$ bits (\sref{sub:SubresultantAlgorithm}).
  Further improvement is necessary to reach the nearly optimal bit-complexity $\tilde{O}( n^3 b )$
  of Sch\"onhage \cite{Schoenhage:1986} (\sref{sub:AlgorithmicImprovements}).

  Nevertheless, the Sturm--Cauchy method can be useful in hybrid algorithms, 
  in order to verify numerical approximations
  and to improve them as necessary \cite{Rump:2003}.
  Once sufficient approximations of the roots have been obtained,
  one can switch to Newton's method, which converges much faster 
  but vitally depends on good starting values (\sref{sub:NewtonMethod}).
\end{remark}

\begin{Annotation}
  \textbf{(Sturm's forgotten proof)}
  Attracted by the aforementioned features, I worked out 
  the real-algebraic approach for a computer algebra course 
  at the University of Grenoble in 2008.  The idea seems natural, 
  and so I was surprised not to find any such proof in the literature.
  Retracing its history (\sref{sec:HistoricalRemarks}), 
  I was even more surprised when I finally unearthed the key ideas 
  in the works of Cauchy and Sturm 
  (\sref{sub:CauchySturmLiouville}).  Why have they been lost?
  The real-algebraic proof is, of course, based on classical ideas.
  The geometric idea goes back to Gauss in 1799; Sturm's algebraic method 
  and Cauchy's analytic techniques have been developed in the 1830s.
  Since then they have evolved in very different directions:
  
  Sturm's theorem has become a cornerstone of real algebra.
  Cauchy's integral is the starting point of complex analysis.
  Their algebraic method for counting complex roots, however,
  has transited from algebra to applications, where its 
  conceptual and algorithmic simplicity are much appreciated.
  Algebra textbooks published since the end of the 19th century do not present it;
  by now it is known almost exclusively to specialists as as a tool for \emph{computation},
  for example in the Routh--Hurwitz theorem on the stability of motion. 
  After Sturm's outline of 1836, this algebraic tool seems to have 
  never been employed to \emph{prove} the existence of roots.
  
  In retrospect, the proof presented here is a fortunate rediscovery 
  of Sturm's algebraic vision (\sref{sub:SturmsAlgebraicVision}).
  The present article gives a modern, rigorous, and complete presentation, 
  which means to set up the right definitions and to provide elementary, real-algebraic proofs.
  
\end{Annotation}

\subsection{How this article is organized} \label{sub:ArticleOrganization}

Section \ref{sec:RealClosedFields} briefly recalls the notion 
of real closed fields, on which we build Sturm's theorem 
and the theory of Cauchy's index.

Section \ref{sec:Sturm} presents Sturm's theorem \cite{Sturm:1835}
counting real roots of real polynomials.
The only novelty is the extension to boundary points,
which is needed in Section \ref{sec:Cauchy}.

Section \ref{sec:Cauchy} proves Cauchy's theorem \cite{Cauchy:1837}
counting complex roots of complex polynomials,
by establishing multiplicativity \eqref{property:Multiplicativity} 
of the algebraic winding number.

Section \ref{sec:FTA} establishes homotopy invariance \eqref{property:HomotopyInvariance},
and proves the Fundamental Theorem of Algebra by Gauss' winding number argument.

Section \ref{sec:AlgorithmicAspects} discusses algorithmic aspects,
such as Turing computability, the efficient computation of Cauchy indices,
and the crossover to Newton's local method.

Section \ref{sec:HistoricalRemarks}, finally, provides historical comments 
in order to put the real-algebraic approach into a wider perspective. 



I have tried to keep the exposition elementary yet detailed.
I hope that the interest of the subject justifies the resulting length of this article.

\begin{Annotation}
  \textbf{(Why should we care for yet another proof?)}
  There are several lines of proof leading to the Fundamental Theorem of Algebra,
  and literally hundreds of variants have been published over the last 200 years
  (see \sref{sec:HistoricalRemarks}).  The motivations for the present work are threefold:
  \begin{itemize}

  \item
    First, on a philosophical level, it is satisfying 
    to minimize the hypotheses and the tools used in the proof, 
    and simultaneously maximize the conclusion.
    
    
  \item
    Second, when teaching mathematics, it is advantageous to have 
    different proofs to choose from, adapted to the course's level and context.
    
  
  \item
    Third, from a practical point of view, it is desirable 
    to have a constructive proof, even more so if it 
    directly translates to a practical algorithm.
    
    
  \end{itemize}
  
  In this annotated student version, some complementary remarks are included that will not appear in the published version.  
  They are set in small font, as this one, and numbered separately in order to ensure consistent references.

  \bigskip
  \makeatletter
  \begin{quote}
    \renewcommand{\contentsline}[4]{#2}
    \renewcommand{\tocsection}[3]{%
      \ifthenelse{\equal{#2}{}}{\unskip\ignorespaces}{\item[#2.] \bf#3\unskip\@addpunct.\rm }}
    \renewcommand{\tocsubsection}[3]{%
      \ifthenelse{\equal{#2}{}}{\unskip\ignorespaces}{#2.~#3\unskip\@addpunct. }}
    \renewcommand{\tocsubsubsection}[3]{\hspace{-3pt}\unskip\ignorespaces}
    \begin{center}\contentsnamefont\contentsname\end{center}
    \medskip
    \setTrue{toc}
    \@input{\jobname.toc}%
    \if@filesw
    \@xp\newwrite\csname tf@toc\endcsname
    \immediate\@xp\openout\csname tf@toc\endcsname \jobname.toc\relax
    \fi
    \global\@nobreakfalse \par
    \addvspace{32\p@\@plus14\p@}%
    \let\tableofcontents\relax
  \end{quote}
  \makeatother
  
\end{Annotation}


\section{Real closed fields} \label{sec:RealClosedFields}

This section sets the scene by recalling the notion of a real closed field, 
on which we build Sturm's theorem in \sref{sec:Sturm}, 
and also sketches its mathematical context.



\begin{Annotation} 
  \textbf{(Fields)} \label{anno:Field}
  We assume that the reader is familiar with the algebraic notion of a \emph{field}.
  In order to highlight the field axioms formulated in first-order logic, we recall that 
  a field $(\R,+,\cdot)$ is a set $\R$ equipped with two binary operations 
  $+ \colon \R \times \R \to \R$ and $\cdot \colon \R \times \R \to \R$
  satisfying the following three sets of axioms: 

  First, addition enjoys the following four properties,
  saying that $(\R,+)$ is an abelian group:
  \begin{mylist}[80pt]
  \item[(A1) associativity]    For all $a,b,c \in \R$ we have $(a+b)+c = a+(b+c)$.
  \item[(A2) commutativity]    For all $a,b \in \R$ we have $a+b = b+a$. 
  \item[(A3) neutral element]  There exists $0 \in \R$ such that for all $a \in \R$ we have $a+0 = a$.
  \item[(A4) opposite elements] For each $a \in \R$ there exists $b \in \R$ such that $a+b = 0$.
  \end{mylist}
  The neutral element $0 \in \R$ whose existence is required by axiom (A3) is unique by (A2).
  This ensures that axiom (A4) is unambiguous.  The opposite element of $a \in \R$ 
  required by axiom (A4) is unique and denoted by $-a$.

  Second, multiplication enjoys the following four properties,
  saying that $(\R^*,\cdot)$ is an abelian group:
  \begin{mylist}[80pt]
  \item[(M1) associativity]    For all $a,b,c \in \R$ we have $(a \cdot b) \cdot c = a \cdot (b \cdot c)$.
  \item[(M2) commutativity]    For all $a,b \in \R$ we have $a \cdot b = b \cdot a$.
  \item[(M3) neutral element]  There exists $1 \in \R$, $1 \ne 0$, such that for all $a \in \R$ we have $a \cdot 1 = a$.
  \item[(M4) inverse elements] For each $a \in \R$, $a \ne 0$, there exists $b \in \R$ such that $a \cdot b  = 1$.
  \end{mylist}
  The neutral element $1 \in \R$ whose existence is required by axiom (M3) is unique by (M2).
  This ensures that axiom (M4) is unambiguous.  The inverse element of $a \in \R$ 
  required by axiom (M4) is unique and denoted by $a^{-1}$.

  Third, multiplication is distributive over addition:
  \begin{mylist}[80pt]
  \item[(D) distributivity]    For all $a,b,c \in \R$ we have $a \cdot (b + c) = (a \cdot b) + (a \cdot c)$. 
  \end{mylist}
\end{Annotation}
    
\begin{Annotation} 
  \textbf{(Ordered fields)} \label{anno:OrderedField}
  An \emph{ordered field} is a field $\R$ with a distinguished subset 
  $\R_{>0} \subset \R$ of positive elements, denoted $x > 0$, that is
  compatible with the field operations in the following sense:
  \begin{mylist}[80pt]
  \item[(O1) trichotomy]       For each $x \in \R$ we have either $x > 0$ or $x = 0$ or $-x > 0$.
  \item[(O2) compatibility]    For all $x,y \in \R$ the conditions $x > 0$ and $y > 0$ imply $x+y > 0$ and $x y > 0$.
  \end{mylist}
  
  We define the ordering $x > y$ by $x-y > 0$.
  The weak ordering $x \ge y$ means $x > y$ or $x = y$.
  The inverse ordering $x < y$ is defined by $y > x$,
  and likewise $x \le y$ is defined by $y \ge x$.
  From the above axioms follow the usual properties; see 
  Jacobson \cite[\textsection 5.1]{Jacobson:1985:1989}, 
  Cohn \cite[\textsection 8.6]{Cohn:2003}, 
  or Lang \cite[\textsection XI.1]{Lang:2002}.
  Intervals in $\R$ will be denoted, as usual, by 
  \begin{align*}
    [a,b] & = \{\, x \in \R \mid a \le x \le b \,\} ,
    & \ei{a}{b} = \{\, x \in \R \mid a < x \le b \,\} , \\
    \ee{a}{b} & = \{\, x \in \R \mid a < x < b \,\} ,
    & \ie{a}{b} = \{\, x \in \R \mid a \le x < b \,\} .
  \end{align*}
  
  Every ordered field $\R$ inherits a natural topology generated by open intervals:
  a subset $U \subset \R$ is open if for each $x \in U$ there exists $\delta > 0$
  such that $\ee{x-\delta}{x+\delta} \subset U$.  We can thus apply the usual 
  notions of topological spaces and continuous functions.  Addition and 
  multiplication are continuous, and so are polynomial functions.

  For $x \in \R$ we define the absolute value 
  to be $\abs{x} := x$ if $x \ge 0$ and $\abs{x} := -x$ if $x \le 0$.
  We record the following properties, which hold for all $x,y \in \R$:
  \begin{enumerate}
  \item $\abs{x} \ge 0$, and $\abs{x} = 0$ if and only if $x=0$.
  \item $\abs{x + y} \le \abs{x} + \abs{y}$ for all $x,y \in \R$.
  \item $\abs{x \cdot y} = \abs{x} \cdot \abs{y}$ for all $x,y \in \R$.
  \end{enumerate}
  %

  For every $x \in \R$ we have $x^2 \ge 0$ with equality if and only if $x = 0$.
  The polynomial $X^2-a$ can thus have a root $x \in \R$ only for $a \ge 0$;
  if it has a root, then $X^2 - a = (X-x)(X+x)$ and among the two roots $\pm x$ 
  we can choose signs such that $x \ge 0$, denoted $\sqrt{a} := x$. This implies that $\sqrt{x^2} = \abs{x}$.  
  In \sref{sub:ComplexFields} we will extend the absolute value $\abs{\;} \colon \R \to \R_{\ge0}$ 
  to a norm $\abs{\;} \colon \C \to \R_{\ge0}$ on the complex field $\C = \R[i]$.
\end{Annotation}

\begin{Annotation} 
  \textbf{(Rings)} \label{anno:Ring}
  A \emph{ring} $(\R,+,\cdot)$ is only required 
  to satisfy axioms (A1-A4), (M1-M3), and (D) but not necessarily (M4).
  This is sometimes called a \emph{commutative ring with unit},
  for emphasis, but we will have no need for this distinction.
  A ring $\R$ is called \emph{integral} if for all $a,b \in \R^*$ we have $a b \in \R^*$.
  Every integral ring $\R$ can be embedded into a field; the smallest 
  such field is unique and thus called the \emph{field of fractions} of $\R$.
  If a ring ist ordered (in the sense of Annotation \ref{anno:OrderedField}),
  then it is integral and the ordering uniquely extends to its field of fractions.
  For example, the ring $\ZZ$ of integers thus yields the field $\QQ$ of rational numbers.
  In this article we will study the ring $\R[X]$ polynomials 
  over some ordered field $\R$, as explained below, which has 
  as field of fractions the field of rational functions $\R(X)$.
\end{Annotation}

\subsection{Real numbers} \label{sub:RealNumbers}

As usual we denote by $\RR$ the field of real numbers, that is, 
an ordered field $(\RR,+,\cdot,<)$ such that every nonempty 
bounded subset $A \subset \RR$ has a least upper bound in $\RR$. 
This is a very strong property, and in fact it characterizes $\RR$.

\begin{theorem} \label{thm:CharacterizingTheReals}
  Let $\R$ be an ordered field, with the order-topology generated by the open intervals. 
  Then the following conditions are equivalent:
  \begin{enumerate}
  \item The ordered set $(\R,<)$ satisfies the least upper bound property.
  \item Each interval $[a,b] \subset \R$ is compact as a topological space.
  \item Each interval $[a,b] \subset \R$ is connected as a topological space.
  \item The intermediate value property holds for all continuous functions $f \colon \R \to \R$.
  \end{enumerate}
  
  Any two ordered fields satisfying these properties are isomorphic 
  by a unique field isomorphism, and this isomorphism preserves order.
  Any construction of the real numbers shows that one such field exists.
  \qed
\end{theorem}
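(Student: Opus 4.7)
The plan is to prove the four-way equivalence by a short cycle of implications, then deduce uniqueness up to unique isomorphism, and finally invoke the standard Dedekind-cut construction to produce an example. I would carry out (1)$\Rightarrow$(3)$\Rightarrow$(4)$\Rightarrow$(1) as the main loop, together with (1)$\Rightarrow$(2) and (2)$\Rightarrow$(1) separately, since the cleanest route to compactness passes through the least upper bound property rather than through connectedness.

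For the implications themselves: (1)$\Rightarrow$(2) is the classical Heine--Borel argument -- given an open cover $\mathcal U$ of $[a,b]$, form $s = \sup\{x \in [a,b] : [a,x]$ admits a finite subcover$\}$, and use a cover element around $s$ to push $s$ first into $S$ and then all the way to $b$. For (2)$\Rightarrow$(1) I would argue by contradiction: a nonempty set $A$ bounded above but with no supremum induces an open cover of any interval $[a_0, b_0]$ (with $a_0 \in A$ and $b_0$ an upper bound) by the sets $(-\infty, a)$ for $a \in A$ and $(b, +\infty)$ for upper bounds $b$; a finite subcover then yields a direct contradiction. For (1)$\Rightarrow$(3) I would again argue by contrapositive: a clopen decomposition $[a,b] = U \sqcup V$ with $a \in U$ lets us take $s = \sup U$; then $U$ closed forces $s \in U$, while $U$ open forces $s \notin U$. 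The step (3)$\Rightarrow$(4) is textbook: if a continuous $f \colon \R \to \R$ avoided some value $c$ strictly between $f(a)$ and $f(b)$, then $\{f < c\}$ and $\{f > c\}$ would disconnect $[a,b]$. Finally, for (4)$\Rightarrow$(1): if some nonempty bounded $A$ had no supremum, the set $U$ of upper bounds and the set $V$ of non-upper-bounds are both open (the former because, in the absence of a sup, every upper bound is strictly larger than some other upper bound, the latter because for $x \in V$ some $a \in A$ strictly exceeds $x$ on a whole neighbourhood), so the function $f$ taking value $1$ on $U$ and $0$ on $V$ is locally constant, hence continuous, yet it violates the IVT between any point of $V$ and any point of $U$.

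For uniqueness, the first step is to extract the Archimedean property from (1): were $\NN \subset \R$ bounded above, its supremum $s$ would satisfy that $s-1$ is not an upper bound, producing $n \in \NN$ with $n+1 > s$, a contradiction. Consequently $\QQ$ is order-dense in $\R$, and each $x \in \R$ is determined by its lower cut $\{q \in \QQ : q < x\}$. The unique ring homomorphism $\QQ \into \R_i$ (automatically order-preserving) then extends uniquely to an order-preserving field isomorphism $\R_1 \isoto \R_2$ via $x \mapsto \sup_{\R_2}\{q \in \QQ : q < x\}$, the extension being forced because both the ordering and the field operations are continuous in the order topology. For existence, the standard Dedekind-cut construction organises cuts of $\QQ$ into an ordered field whose LUB property is immediate from the construction.

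The hard part, to my mind, is (4)$\Rightarrow$(1): the naive ``indicator of upper bounds'' approach only becomes legitimate once one realises that the \emph{absence} of a supremum is precisely what forces both the upper-bound set and its complement to be open, so that the locally constant function is genuinely continuous. A secondary subtlety is that the uniqueness argument tacitly depends on the Archimedean property, which must be pried out of the LUB axiom before the Dedekind-cut bijection between $\R_1$ and $\R_2$ can even be formulated.
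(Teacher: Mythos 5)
Your treatment of $(4) \Rightarrow (1)$ --- the one implication the paper actually writes out --- is the same argument the paper gives: when $A$ has no supremum, the two-valued indicator of the set of upper bounds is locally constant (hence continuous) yet jumps without taking intermediate values. Where you depart from the paper is in the overall bookkeeping: the paper explicitly delegates the remaining implications to ``any analysis course'' (suggesting the cycle $(1) \Rightarrow (2) \Rightarrow (4) \Rightarrow (1)$ with $(3) \Leftrightarrow (4)$ on the side), whereas you sketch all of them, organized as a main cycle $(1) \Rightarrow (3) \Rightarrow (4) \Rightarrow (1)$ plus the separate equivalence $(1) \Leftrightarrow (2)$. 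Both routings are standard; yours has the small structural advantage that the Heine--Borel-type argument, the most technical of the omitted steps, is decoupled from the cycle. Your observation that the Archimedean property must be extracted from the least upper bound axiom before the uniqueness argument can even be set up is exactly right and frequently glossed over.

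One detail to patch in your sketch of $(1) \Rightarrow (3)$: taking $s = \sup U$ yields a contradiction from openness of $U$ only when $s < b$; if $b \in U$, openness is entirely compatible with $s = b$. You have to use that $V$ is nonempty, for instance by setting $s = \sup\{x \in [a,b] : [a,x] \subset U\}$, or by choosing $v_0 \in V$ and working with $\sup(U \cap [a,v_0]) < v_0$. This is a gap in the sketch, not in the strategy, and is easily repaired.
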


\begin{Annotation}
  \textbf{(Sketch of proof)}
  Existence and uniqueness of the field $\RR$ of real numbers 
  form the foundation of any analysis course.
  Many analysis books prove $(1) \Rightarrow (2) \Rightarrow (4)$, while 
  $(3) \Leftrightarrow (4)$ is essentially the definition of connectedness.
  Here we only show $(4) \Rightarrow (1)$, in the form $\neg(1) \Rightarrow \neg(4)$.
  
  Let $A \subset \R$ be nonempty and bounded above.
  Define $f \colon \R \to \{\pm1\}$ by
  $f(x) = 1$ if $a \le x$ for all $a \in A$,
  and $f(x) = -1$ if $x < a$ for some $a \in A$.
  In other words, we have $f(x) = 1$ 
  if and only if $x$ is an upper bound of $A$.
  If $f$ is discontinuous at $x$, then $f(x) = +1$
  but $f(y) = -1$ for all $y < x$, whence $x = \sup A$.
  If $A$ does not have a least upper bound in $\R$, 
  then $f$ is continuous but does not satisfy 
  the intermediate value property.
\end{Annotation}

\subsection{Real closed fields} \label{sub:RealClosedFields}

The field $\RR$ of real numbers provides the foundation of analysis.
In the present article it appears as the most prominent 
example of the much wider class of real closed fields.
The reader who wishes to concentrate on the classical case 
may skip the rest of this section and assume $\R = \RR$ throughout.

\begin{Annotation}
  \textbf{(Polynomials)}
  In the sequel we shall assume that the reader is familiar 
  with the polynomial ring $\K[X]$ over some ground ring $\K$,
  see Jacobson \cite[\textsection 2.9--2.12]{Jacobson:1985:1989} 
  or Lang \cite[\textsection II.2, \textsection IV.1]{Lang:2002}.
  We briefly recall some notation.
  Let $\K$ be a ring, that is, satisfying axioms (A1-A4), (M1-M3), and (D) 
  of Annotation \ref{anno:OrderedField}, but not necessarily (M4).
  There exists a ring $\K[X]$ characterized by the following two properties: 
  First, $\K[X]$ contains $\K$ as a subring and $X$ as an element.
  Second, every nonzero element $P \in \K[X]$ can be uniquely written as
  \[
  P = c_0 + c_1 X + \dots + c_n X^n  \quad\text{where}\quad 
  n \in \NN \text{ and } c_0,c_1,\dots,c_n \in \K, c_n \ne 0 .
  \]
  
  In this situation $\K[X]$ is called the \emph{ring of polynomials} over $\K$ in the variable $X$, 
  and each element $P \in \K[X]$ is called a \emph{polynomial} over $\K$ in $X$.
  In the above notation we call $\deg P := n$ the \emph{degree}
  and $\lc P := c_n$ the \emph{leading coefficient} of $P$.
  The zero polynomial is special: we set $\deg 0 := -\infty$ and $\lc 0 := 0$.
\end{Annotation}

\begin{Annotation}
  \textbf{(Polynomial functions)}
  The ring $\K[X]$ has the following universal property:
  for every ring $\K'$ containing $\K$ as a subring
  and every element $x \in \K'$ there exists a unique
  ring homomorphism $\Phi \colon \K[X] \to \K'$
  such that $\Phi|_\K = \id_\K$ and $\Phi(X) = x$.
  Explicitly, $\Phi$ sends 
  $P = c_0 + c_1 X + \dots + c_n X^n$ to 
  $P(x) = c_0 + c_1 x + \dots + c_n x^n$.
  In particular, each polynomial $P \in \K[X]$ defines 
  a polynomial function $f_P \colon \K \to \K$, $x \mapsto P(x)$.
  If $\K$ is an infinite integral ring, for example an ordered ring or field,
  then the map $P \mapsto f_P$ is injective, and we can thus identify
  each polynomial $P \in \K[X]$ with the associated function $f_P \colon \K \to \K$.
  Traditionally equations have \emph{roots} and functions have \emph{zeros}.
  In this article we use both words ``roots'' and ``zeros'' synonymously. 
\end{Annotation}

\begin{definition} \label{def:RealClosedField}
  An ordered field $(\R,+,\cdot,<)$ is \emph{real closed}
  if it satisfies the intermediate value property for polynomials: 
  whenever $P \in \R[X]$ satisfies $P(a) P(b) < 0$ for some $a < b$ in $\R$, 
  then there exists $x \in \R$ with $a < x < b$ such that $P(x) = 0$.
\end{definition}

\begin{example}
  The field $\RR$ of real numbers is real closed 
  by Theorem \ref{thm:CharacterizingTheReals} above.  
  The field $\QQ$ of rational numbers is not real closed, 
  as shown by the example $P = X^2-2$ on $[1,2]$.
  The algebraic closure $\QQ^c$ of $\QQ$ in $\RR$ is a real closed field.
  In fact, $\QQ^c$ is the smallest real closed field, in the sense
  that $\QQ^c$ is contained in any real closed field.
  Notice that $\QQ^c$ is much smaller than $\RR$, 
  in fact $\QQ^c$ is countable whereas $\RR$ is uncountable.
\end{example}

The theory of real closed fields originated in the work 
of Artin and Schreier \cite{ArtinSchreier:1926,ArtinSchreier:1927} 
in the 1920s, culminating in Artin's solution \cite{Artin:1927} of Hilbert's 17th problem.
Excellent textbook references include Jacobson \cite[chap.\,I.5 and II.11]{Jacobson:1985:1989} 
and Bochnak--Coste--Roy \cite[chap.\,1 and 6]{BochnakCosteRoy:1998}.
For the present article, Definition \ref{def:RealClosedField} above is 
the natural starting point because it captures the essential geometric feature.
It deviates from the algebraic definition of Artin--Schreier \cite{ArtinSchreier:1926},
saying that an ordered field is real closed if no proper algebraic extension can be ordered.
For a proof of their equivalence see \cite[Prop.\,8.8.9]{Cohn:2003} 
or \cite[\textsection 1.2]{BochnakCosteRoy:1998}.

\begin{remark}
  In a real closed field $\R$ every positive element has a square root,
  and so the ordering on $\R$ can be characterized in algebraic terms:
  For every $a \in \R$ we have $a \ge 0$ if and only if there exists $b \in \R$ such that $b^2 = a$.
  In particular, if a field is real closed, then it admits precisely 
  one ordering that is compatible with the field structure.
\end{remark}

Every archimedean ordered field can be embedded 
into $\RR$; see \cite[\textsection 8.7]{Cohn:2003}. 
The field $\RR(X)$ of rational functions can be ordered 
(in many different ways; see \cite[\textsection 1.1]{BochnakCosteRoy:1998})
but does not embed into $\RR$.  Nevertheless 
it can be embedded into its real closure.

\begin{theorem}[Artin--Schreier {\cite[Satz 8]{ArtinSchreier:1926}}] \label{thm:RealClosure}
  Every ordered field $\K$ admits a real closure, i.e., a real closed field 
  that is algebraic over $\K$ and whose unique ordering extends that of $\K$.
  Any two real closures of $\K$ are isomorphic via a unique isomorphism fixing $\K$.
  \qed
\end{theorem}

The real closure is thus completely rigid, in contrast to the algebraic closure.


\begin{remark} \label{rem:ArtinSchreierTheorem}
  Artin and Schreier \cite[Satz 3]{ArtinSchreier:1926} proved that 
  if a field $\R$ is real closed, then $\C = \R[i]$ is algebraically closed,
  recasting the classical algebraic proof of the Fundamental Theorem of Algebra (\sref{ssub:Algebra}).
  Conversely \cite{ArtinSchreier:1927}, if a field $\C$ is algebraically closed 
  and contains a subfield $\R$ such that $1 < \dim_\R(\C) < \infty$, 
  then $\R$ is real closed and $\C = \R[i]$.  
\end{remark}

\begin{Annotation}
  \textbf{(Finiteness conditions)}
  In the sequel we will not appeal to the least upper bound property, nor compactness nor connectedness.
  The intermediate value property for polynomials is a sufficiently strong hypothesis. 
  In order to avoid compactness, a sufficient finiteness condition will be the fact 
  that a polynomial of degree $n$ over a field $\K$ can have at most $n$ roots in $\K$.

  In general $P$ can have \emph{less} than $n$ roots, of course, 
  as illustrated by the polynomial $X^2+1$ over $\RR$.
  The fact that $P$ cannot have \emph{more} than $n$ roots 
  relies on commutativity (M2) and invertibility (M4). 
  For example $X^2-1$ has four roots in the nonintegral ring 
  $\ZZ/8\ZZ$ of integers modulo $8$, namely $\pm1$ and $\pm3$.
  On the other hand, $X^2+1$ has infinitely many roots 
  in the skew field $\mathbb{H} = \RR + \RR i + \RR j + \RR k$ 
  of Hamilton's quaternions \cite[chap.\,7]{EbbinghausEtAl:1991}, 
  namely every combination $a i + b j + c k$ with $a,b,c \in \RR$ such that $a^2+b^2+c^2 = 1$.  
  The limitation on the number of roots makes the theory of fields very special.  
  We will repeatedly use it as a crucial finiteness condition.
\end{Annotation}
  

\subsection{Elementary theory of ordered fields} \label{sub:ElementaryTheory}

The axioms of an ordered field $(\R,+,\cdot,<)$ are formulated in first-order logic,
which means that we quantify over elements of $\R$, but not over subsets, functions, etc.
By way of contrast, the characterization of the field $\RR$ of real numbers 
(Theorem \ref{thm:CharacterizingTheReals}) is of a different nature:
here we have to quantify over subsets of $\RR$, or functions $\RR \to \RR$,
and such a formulation uses second-order logic.

The algebraic condition for an ordered field $\R$ to be real closed is of first order.
It is given by an axiom scheme where for each degree $n \in \NN$ we have the axiom 
\begin{multline}
  \label{eq:IntermediatValueAxiom}
  \forall a,b,c_0,c_1,\dots,c_n \in \R \, \bigl[
  (c_0 + c_1 a + \dots + c_n a^n)(c_0 + c_1 b + \dots + c_n b^n) < 0 
  \\ \Rightarrow \exists x \in \R \, \bigl( (x-a)(x-b) < 0 \;\land\; c_0 + c_1 x + \dots + c_n x^n = 0 \bigr) \bigr].
\end{multline}

First-order formulae are customarily called \emph{elementary}.
The collection of all first-order formulae that are true over 
a given ordered field $\R$ is called its \emph{elementary theory}.

Tarski's theorem \cite{Jacobson:1985:1989,BochnakCosteRoy:1998}
says that all real closed fields share the same elementary theory:
if an assertion in the first-order language of ordered fields
is true over one real closed field, for example the real numbers, 
then it is true over every real closed field.
(This no longer holds for second-order assertions, where $\RR$ 
is singled out as in Theorem \ref{thm:CharacterizingTheReals}.)

Tarski's theorem implies that euclidean geometry, 
seen as cartesian geometry modeled on the vector space $\RR^n$,
remains unchanged if the field $\RR$ of real numbers 
is replaced by any other real closed field $\R$.
This is true as far as its first-order properties are
concerned, and these comprise the core of classical geometry.
In this vein we encode the geometric notion of winding number 
in the first-order theory of real closed fields.

\begin{remark} 
  Tarski's theorem is a vast generalization of Sturm's technique, 
  and so is its effective formulation, called \emph{quantifier elimination},
  which provides explicit decision procedures.
  In principle such procedures could be used to generate a proof 
  of the Fundamental Theorem of Algebra in every fixed degree.
  We will not use Tarski's theorem, however, and we only mention it
  in order to situate our approach in its logical context.
\end{remark}

\begin{Annotation}
  \textbf{(Categoricity)}
  An axiom system is called \emph{categorical} if any two of its models are isomorphic,
  or stated differently, if it has only one model up to isomorphism.
  For example, this is the case for the axioms characterizing the natural numbers
  (where the axiom of induction is of second order) or the axioms characterizing 
  the real numbers (where the least upper bound axiom is of second order).
  The first-order axioms of real closed fields are not categorical,
  because there are many nonisomorphic models, besides $\RR$ for example $\QQ^c$.
\end{Annotation}

\begin{Annotation}
  \textbf{(Decidability)}
  The elementary theory of real closed fields can be recursively axiomatized,
  as seen above.  By Tarski's theorem it is \emph{complete} in the sense that 
  any two models of it share the same elementary theory.  
  This implies decidability, that is, the true formulae can be determined effectively.
  Effective decision procedures are provided by \emph{quantifier elimination},
  and the quest for \emph{efficient} quantifier elimination algorithms,
  at least for certain families of formulae, is an active area of research.
\end{Annotation}


\section{Sturm's theorem for real polynomials} \label{sec:Sturm}

This section recalls Sturm's theorem for polynomials over a real closed field
-- a gem of 19th century algebra and one of the greatest discoveries in the theory of polynomials.

It seems impossible to surpass the elegance of the original 
m\'emoires by Sturm \cite{Sturm:1835} and Cauchy \cite{Cauchy:1837}.
One technical improvement of our presentation, however, seems noteworthy: 
The inclusion of boundary points streamlines the arguments so that 
they will apply seamlessly to the complex setting in \sref{sec:Cauchy}.
The necessary amendments render the development hardly any longer or more complicated.
They pervade, however, all statements and proofs, so that it seems 
worthwhile to review the classical arguments in full detail.

\subsection{Counting sign changes}

For every ordered field $\R$, we define $\sign \colon \R \to \{-1,0,+1\}$ 
by $\sign(x) = +1$ if $x > 0$, $\sign(x) = -1$ if $x < 0$, and $\sign(0)=0$.
Given a finite sequence $s = (s_0,\dots,s_n)$ in $\R$, we say that 
the pair $(s_{k-1},s_k)$ presents a \emph{sign change} if $s_{k-1} s_k < 0$.
The pair presents \emph{half a sign change} if one element is zero while the other 
is nonzero.  In the remaining cases there is no sign change.
All cases can be subsumed by the formula 
\begin{equation} \label{eq:SingleSignChange}
  V(s_{k-1},s_k) := \half \bigl| \sign(s_{k-1}) - \sign(s_{k}) \bigr| .
\end{equation}

\begin{definition} \label{def:SignCounting}
  For a finite sequence $s = (s_0,\dots,s_n)$ in $\R$ 
  the \emph{number of sign changes} is 
  \begin{equation} \label{eq:SignChanges}
    V(s) := \sum_{k=1}^n V(s_{k-1},s_k) 
    = \sum_{k=1}^n \half \bigl| \sign(s_{k-1}) - \sign(s_{k}) \bigr| .
  \end{equation}

  For a finite sequence $(S_0,\dots,S_n)$ of polynomials in $\R[X]$
  and $a \in \R$ we set 
  \begin{equation} \label{eq:PolynomialSignChanges}
    V_a\bigl( S_0,\dots,S_n \bigr) := 
    V\bigl( S_0(a),\dots,S_n(a) \bigr) .
  \end{equation}

  For the difference at two points $a,b \in \R$ 
  we use the notation $V_a^b := V_a - V_b$.
\end{definition}

\begin{Annotation}
  The number $V(s_0,\dots,s_n)$ does not change 
  if we multiply all $s_0,\dots,s_n$ by some constant $q \in \R^*$.
  Likewise, $V_a^b(S_0,\dots,S_n)$ remains unchanged if we multiply all $S_0,\dots,S_n$ 
  by some polynomial $Q \in \R[X]^*$ that does not vanish in $\{a,b\}$.
  Such operations will be used repeatedly later on.
\end{Annotation}

There is no universal agreement how to count sign changes 
because each application requires its specific conventions. 
While there is no ambiguity for $s_{k-1} s_k < 0$ and $s_{k-1} s_k > 0$, 
some arbitration is needed to take care of possible zeros.
Our definition \eqref{eq:SingleSignChange} has been chosen 
to account for boundary points in Sturm's theorem, as explained below. 

The traditional way of counting sign changes, following Descartes, 
is to extract the subsequence $\hat{s}$ by discarding all zeros of $s$
and to define $\hat{V}(s) := V(\hat{s})$.  (This counting rule is 
nonlocal whereas in \eqref{eq:SignChanges} only neighbours interact.)
As an illustration we recall Descartes' rule of signs 
and its generalization due to Budan and Fourier 
\cite[chap.\,10]{RahmanSchmeisser:2002}.

\begin{theorem} \label{thm:DescartesBudanFourier}
  For every nonzero polynomial $P = c_0 + c_1 X + \dots + c_n X^n$ 
  over an ordered field $\R$,  the number of positive roots
  counted with multiplicity satisfies the inequality
  \begin{equation} \label{eq:DescartesRule}
    \mcard \bigl\{\, x \in \R_{>0} \bigm| P(x)=0 \,\bigr\} 
    \quad\le\quad \hat{V}(c_0,c_1,\dots,c_n) .
  \end{equation}
  More generally, the number of roots in any interval $\ei{a}{b} \subset \R$ satisfies the inequality
  \begin{equation} \label{eq:BudanFourier}
    \mcard \bigl\{\, x \in \ei{a}{b} \bigm| P(x)=0 \,\bigr\}
        \quad\le\quad \hat{V}_a^b(P,P',\dots,P^{(n)}) .
  \end{equation}
  Equality holds for every interval $\ei{a}{b} \subset \R$ if and only if $P$ has $n$ roots in $\R$.
  \\
  The excess $(\text{r.h.s.} - \text{l.h.s.})$ is even for all $P,a,b$ if and only if $\R$ is real closed.
  \qed
\end{theorem}

\begin{Annotation}
  \textbf{(Sketch of proof)}
  Descartes' rule \eqref{eq:DescartesRule} is a special case of 
  the Budan--Fourier bound \eqref{eq:BudanFourier} for $a=0$ 
  and $b \to +\infty$, so we concentrate on the latter.
  The number $\hat{V}_a$ counts the sign variations at $a$,
  and the difference $\hat{V}_a^b$ tells us how many sign variations 
  we lose when going from $a$ to $b$.  
  We assume that $P$ is of degree $n$, which means $c_n \ne 0$, so that
  the polynomials $P,P',\dots,P^{(n)}$ have only finitely many zeros.

  \begin{enumerate}
  \item[(i)]
    Passing a simple zero of $P$, where $P(x) = 0$ but $P'(x) \ne 0$, we lose one sign variation: 
    either $(-,+,\dots) \to (0,+,\dots) \to (+,+,\dots)$ or $(+,-,\dots) \to (0,-,\dots) \to (-,-,\dots)$.
    At a zero of $P$ of multiplicity $2$, where $P(x) = P'(x) = 0$ but $P''(x) \ne 0$, we lose two sign variations:
    either $(+,-,+,\dots) \to (0,0,+,\dots) \to (+,+,+,\dots)$ or $(-,+,-,\dots) \to (0,0,-,\dots) \to (-,-,-,\dots)$.
    In general, at every zero of $P$ of multiplicity $m$, where $P(x) = P'(x) = \dots P^{(m-1)}(x) = 0$ 
    but $P^{(m)}(x) \ne 0$, we lose $m$ sign variations.  
  \item[(ii)]
    At a zero $x$ of some derivative, where $P^{(k)}(x) = 0$ for $0 < k < n$ 
    but $P^{(k-1)}(x) \ne 0$, we always lose an even number of sign variations,
    for example $(\dots,+,+,-,\dots) \to (\dots,+,0,-,\dots) \to (\dots,+,-,-,\dots)$
    or $(\dots,+,-,+,\dots) \to (\dots,+,0,+,\dots) \to (\dots,+,+,+,\dots)$. 
    (The details are left to the reader.)
  \end{enumerate}
  
  If we assume the field $\R$ to be real closed, then (i) and (ii) cover all sign changes.
  Counting all zeros of $P$ on one hand and adding up all sign variations of $P,P',\dots,P^{(n)}$ 
  on the other hand thus yields the Budan--Fourier inequality \eqref{eq:BudanFourier} by (i), 
  and the excess $(\text{r.h.s.} - \text{l.h.s.})$ is always an even integer by (ii).
  If $\R$ is not real closed, we may lose additional sign variations without intermediate zeros, 
  so the inequality \eqref{eq:BudanFourier} still holds.  The excess, however, can now be odd:
  this happens whenever $P(a) P(b) < 0$ without any intermediate zero of $P$.
  
  For $a \to -\infty$ and $b \to +\infty$ we find $\hat{V}_a^b(P,P',\dots,P^{(n)}) = n$,
  because only the highest term counts and yields $\hat{V}_a = n$ and $\hat{V}_b = 0$.
  If equality in \eqref{eq:BudanFourier} holds, then $P$ has $n$ zeros in $\R$.
  Conversely, if $P$ has $n$ zeros in $\R$, then case (i) shows that we lose $n$ sign variations 
  according to the zeros of $P$.  Since $\hat{V}_a^b = n$, we cannot lose any further 
  sign variations in case (ii), so equality in \eqref{eq:BudanFourier} holds for all intervals.
\end{Annotation}

\begin{example}[signature] \label{example:Signature}
  For a self-adjoint matrix $A \in \CC^{n \times n}$, where $A^{\smash{\mathrm{T}}} = \overline{A}$,
  all eigenvalues are real.  Its \emph{signature} is defined as the difference $p-q$ 
  where $p$ resp.\ $q$ is the number of positive resp.\ negative eigenvalues.
  These can be read from the characteristic polynomial $P = c_0 + c_1 X + \dots + c_n X^n$
  as $p = \hat{V}(c_0,c_1,\dots,c_n)$ and $q = \hat{V}(c_0,-c_1,\dots,(-1)^n c_n)$.
\end{example}

\begin{remark} 
  \label{rem:DifferentiableFunctions}
  The Budan--Fourier bound is not restricted to polynomials.  
  Over the real numbers $\RR$ the inequality \eqref{eq:BudanFourier} 
  holds for every $n$-times differentiable function $P \ne 0$
  such that $P^{(n)}$ is of constant sign on $[a,b]$. 
  This extends to every ordered field $\R$, provided that differentiability 
  of $f \colon [a,b] \to \R$ means that there exists $f' \colon [a,b] \to \R$ and $C > 0$ 
  such that $\abs{ f(x)-f(x_0) - f'(x_0)(x-x_0) } \le C \abs{ x-x_0 }^2$ for all $x,x_0 \in [a,b]$.
\end{remark}


\begin{Annotation}
  \textbf{(Sketch of proof)}
  First one shows that $f' \ge 0$ (resp.\ $f' > 0$) implies that $f$ is (strictly) increasing, 
  and symmetrically,  $f' \le 0$ (resp.\ $f' < 0)$ implies that $f$ is (strictly) decreasing.
  Over the reals this is a familiar result for differentiable functions;
  over an arbitrary ordered field, Lipschitz differentiability is a convenient substitute.
  (As a counterexample where the usual differentiability fails over an ordered field, 
  consider $f \colon \QQ \to \QQ$ defined by $f(x) = 0$ for $x^2 > 2$ and $f(x) = 1$ for $x^2 < 2$.
  This function is differentiable, with $f' = 0$, but $f$ is not constant.
  Likewise $g(x) = f(x) + x$ is differentiable, with $g' = 1$, but $g$ is not increasing.)

  We can assume $f^{(n)} \ne 0$. 
  By hypothesis, $f^{(n)}$ is of constant sign, so $f^{(n-1)}$ is strictly monotonic:
  it thus has at most one zero and is of constant sign on each of two complementary subintervals.
  Likewise, $f^{(n-2)}$ has at most two zeros and is of constant sign on each of three complementary subintervals.
  Iterating this argument, $f$ has at most $n$ zeros and is of constant sign on each of $n+1$ complementary subintervals.
  
  Cases (i) and (ii) work as before.  If $f,f',\dots,f^{(n-1)}$ satisfy 
  the intermediate value property (over $\RR$, for example, because they are continuous),
  then (i) and (ii) cover all sign changes; otherwise we may lose 
  the same sign variations without intermediate zeros.
  Counting all zeros of $f$ on one hand and adding up all sign variations of $f,f',\dots,f^{(n)}$ 
  on the other hand thus yields the Budan--Fourier inequality \eqref{eq:BudanFourier}.
  If $f$ satisfies the intermediate value property, then the excess 
  $(\text{r.h.s.} - \text{l.h.s.})$ is always an even integer;
  otherwise $f$ may change signs without intermediate zero, 
  and the excess need no longer be even.

  We also notice that $0 \le \hat{V}_a^b \le n$.  If $f$ happens to have $n$ zeros 
  in $\ei{a}{b}$, then equality holds in \eqref{eq:BudanFourier}.
  Conversely, if equality holds, then case (i) shows that we lose sign variations only according 
  to the zeros of $f$ and cannot lose any further sign variations in case (ii).
  In this situation equality in \eqref{eq:BudanFourier} holds for all subintervals.
\end{Annotation}

The upper bounds \eqref{eq:DescartesRule} and \eqref{eq:BudanFourier} 
are easy to compute but often overestimate the number of roots.
This was the state of knowledge before Sturm's ground-breaking discovery in 1829.
Sturm's theorem (Corollary \ref{cor:CountingRealRoots} below) gives the precise number of roots.

\subsection{The Cauchy index} \label{sub:RealCauchyIndex}

The Cauchy index judiciously counts roots with a sign $\pm1$ 
encoding the passage from negative to positive or from positive to negative.
Instead of zeros of $P$, it is customary to count poles of $f = \frac{1}{P}$, which is of course equivalent.

Informally, as illustrated in Figure \ref{fig:CauchyIndex}, 
we set $\Ind_a(f) = +1$ if $f$ jumps from $-\infty$ to $+\infty$,
and $\Ind_a(f) = -1$ if $f$ jumps from $+\infty$ to $-\infty$, 
and $\Ind_a(f) = 0$ in all other cases.  

\begin{figure}[ht]
  \centering
  \includegraphics[width=\linewidth]{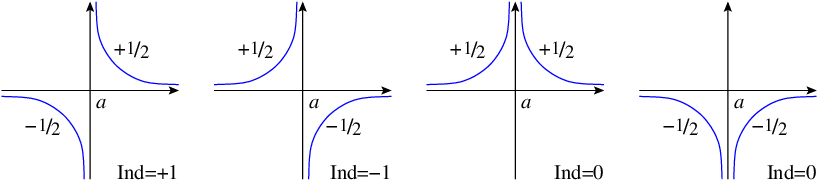}
  \caption{A pole $a$ and its Cauchy index $\Ind_a(f) = \Ind_a^+(f) - \Ind_a^-(f)$}
  \label{fig:CauchyIndex}
\end{figure}

Formally, we define the right limit $\lim_a^+ f$ and the left limit $\lim_a^- f$ 
of $f \in \R(X)^*$ at $a \in \R$ by factoring $f = (X-a)^m g$ 
with $m \in \ZZ$ and $g \in \R(X)^*$ such that $g(a) \in \R^*$.
If $m \ge 0$, then $\lim_a^\varepsilon f = f(a) \in \R$ for both $\varepsilon \in \{\pm\}$; 
if $m < 0$, then $\lim_a^\varepsilon f = \varepsilon^m \cdot \sign g(a) \cdot (+\infty) \in \{\pm\infty\}$. 


\begin{definition} \label{def:CauchyIndex}
  The \emph{Cauchy index} of a rational function $f \in \R(X)^*$ at a point $a \in \R$ is 
  \begin{equation}
    \Ind_a(f) := \Ind_a^+(f) - \Ind_a^-(f)
    \quad\text{where}\quad
    \Ind_a^\varepsilon(f) := \begin{cases} 
      +\half & \text{if $\lim_a^\varepsilon f = +\infty$,} \\
      -\half & \text{if $\lim_a^\varepsilon f = -\infty$,} \\
      0      & \text{otherwise.}
    \end{cases}
  \end{equation}

  For $a < b$ in $\R$ we define the Cauchy index of $f \in \R(X)^*$ on the interval $[a,b]$ by
  \begin{equation} \label{eq:DefCauchyIndex}
    \Ind_a^b(f) := \Ind_a^+(f) + \sum_{x \in \ee{a}{b}} \Ind_x(f) - \Ind_b^-(f).
  \end{equation}

  The sum is well-defined because only finitely many points $x \in \ee{a}{b}$ contribute.
  
  For $b < a$ we define $\Ind_a^b(f) := - \Ind_b^a(f)$,
  and for $a=b$ we set $\Ind_a^a(f) := 0$.

  Finally, we set $\Ind_a^b(\frac{R}{S}) := 0$
  in the degenerate case where $R=0$ or $S=0$.
\end{definition}

Here we opt for a more comprehensive definition \eqref{eq:DefCauchyIndex}
than usual, in order to take care of boundary points.
We will frequently subdivide intervals, 
and this technique works best 
with a uniform definition that avoids case distinctions.
Moreover, we will have reason to consider piecewise
rational functions in \sref{sec:Cauchy}.

\begin{proposition} \label{prop:RealIndexProperties}
  The Cauchy index enjoys the following properties.
  \begin{enumerate}
  \item[(a)] Subdivision: \;
    $\Ind_a^b(f) + \Ind_b^c(f) = \Ind_a^c(f)$ \; 
    for all $a,b,c \in \R$.  
  \item[(b)] Invariance: \;
    $\Ind_a^b(f \circ \tau) = \Ind_{\smash{\tau(a)}}^{\smash{\tau(b)}}(f)$ \; 
    for every linear fractional transformation $\tau \colon [a,b] \to \R$, 
    $\tau(x)=\frac{p x + q}{r x + s}$ where $p,q,r,s \in \R$, without poles on $[a,b]$.
  \item[(c)] Scaling: \;
    $\Ind_a^b(g f) = \sign(g) \Ind_a^b(f)$ \; 
    if $g$ is of constant sign on $[a,b]$.
  \item[(d)] Addition: \;
    $\Ind_a^b(f+g) = \Ind_a^b(f) + \Ind_a^b(g)$ \; 
    if $f,g$ have no common poles. 
    \qed
  \end{enumerate}
\end{proposition}

\begin{Annotation}
  \textbf{(Rational functions as maps)}
  In view of Definition \ref{def:CauchyIndex}
  we wish to interpret rational functions $f \in \R(X)$ as maps.
  The right way to do this is to extend the affine line $\R$ 
  to the projective line $\P\R = \R\cup\{\infty\}$.

  We construct $\P\R = (\R^2\minus\{0\})/_\sim$ as the quotient of $\R^2\minus\{0\}$ 
  by the quivalence $(p,q) \sim (s,t)$ defined by the condition 
  that there exists $u \in \R^*$ such that $(p,q) = (u r, u s)$.
  The equivalence class of $(p,q)$ is denoted by $[p:q]$ and represents 
  the line passing through the origin $(0,0)$ and $(p,q)$ in $\R^2$.
  The affine line $\R$ can be identified with $\{\, [p:1] \mid p \in \R \,\}$;
  this covers all points of $\P\R$ except one: the point at infinity, $\infty = [1:0]$. 

  Likewise we construct $\P\R(X) = (\R(X)^2\minus\{0\})/_\sim$ as the quotient of $\R(X)^2\minus\{0\}$ 
  by the quivalence $(P,Q) \sim (R,S)$ defined by the condition 
  that there exists $U \in \R(X)^*$ such that $(P,Q) = (U R, U S)$.
  The equivalence class of $(P,Q)$ is denoted by $[P:Q]$.
  Here $\R(X)$ can be identified with $\{\, [P:Q] \mid P,Q \in \R[X], Q \ne 0 \,\}$
  using only polynomials.  Again this covers all points of $\P\R(X)$ 
  except one: the point at infinity, $\infty = [1:0]$. 

  Consider $f = [P:Q] \in \P\R(X)$ with $P,Q \in \R[X]$.
  We can assume $\gcd(P,Q)=1$ and set $m =: \max\{\, \deg P, \deg Q \,\}$.
  We then construct homogenous polynomials $\hat{P},\hat{Q} \in \R[X,Y]$ 
  by $X^k \mapsto X^k Y^{m-k}$.  We have $(\hat{P}(x,y),\hat{Q}(x,y)) \ne (0,0)$ 
  for all $(x,y) \ne (0,0)$ in $\R^2$, and the map $\hat{f} \colon \P\R \to \P\R$
  given by $\hat{f}([x:y]) = [\hat{P}(x,y),\hat{Q}(x,y)]$ is well-defined.

  This construction allows us to interpret every $f \in \P\R(X)$ and in particular 
  every rational fraction $f \in \R(X)$ as a map $\hat{f} \colon \P\R \to \P\R$.  
  In the sequel most constructions for $P/Q$ resp.\ $[P:Q]$ are slightly 
  easier in the generic case where $P,Q \in \R[X]^*$, but can easily 
  be extended to the exceptional cases where $P=0$ or $Q=0$.
\end{Annotation}

\begin{Annotation} 
  \textbf{(Winding number)} \label{anno:WindingNumber}
  We can present the ordered field $\R$ as an oriented line,
  the two ends being denoted by $-\infty$ and $+\infty$.
  It is sometimes convenient to formally adjoin two further elements 
  $\pm\infty$ and to extend the order of $\R$ to $\bar\R := \R\cup\{\pm\infty\}$
  such that $-\infty < x < +\infty$ for all $x \in \R$.
  This turns $\bar\R$ into a closed interval.

  \begin{figure}[ht]
    \centering
    \includegraphics[height=16ex]{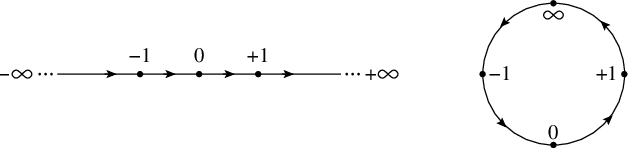}
  \end{figure}

  We can think of the projective line $\P\R = \R\cup\{\infty\}$ as an oriented circle.
  In the above picture this is obtained by identifying $+\infty$ and $-\infty$ in $\bar\R$.
  Even though we cannot extend the ordering of $\R$ to $\P\R$,
  we can nevertheless define a sign function $\P\R \to \{-1,0,+1\}$
  by $\sign([p:q]) = \sign(pq)$, which simply means that $\sign(\infty) = 0$.
  
  The intermediate value property now takes the following form:
  if $f \in \P\R(X)$ satisfies $f(a) f(b) < 0$ for some $a < b$ in $\R$,
  then there exists $x \in \ee{a}{b}$ such that $\sign f(x) = 0$,
  that is $f(x) = 0$ or $f(x) = \infty$.  In other words, a rational 
  function does not change sign without passing through $0$ or $\infty$.
  
  The Cauchy index $\Ind_a^b(f)$ counts the number of times 
  that $f$ crosses $\infty$ from $-$ to $+$ (clockwise in the figure above) 
  minus the number of times that $f$ crosses $\infty$ from $+$ to $-$ 
  (counter-clockwise in the above figure).  
  This geometric interpretation anticipates the winding number 
  of loops in the plane constructed in \sref{sec:Cauchy}.
\end{Annotation}

\subsection{Counting real roots} \label{sub:CountingRealRoots}

The ring $\R[X]$ is equipped with a derivation $P \mapsto P'$
sending each polynomial $P = \sum_{k=0}^n p_k X^k$ 
to its formal derivative $P' = \sum_{k=1}^n k p_k X^{k-1}$. 
This extends in a unique way to a derivation on the field $\R(X)$ 
sending $f = \frac{R}{S}$ to $f' = \frac{R' S - R S'}{S^2}$.
This is an $\R$-linear map satisfying Leibniz' rule $(f g)' = f' g + f g'$.
For $f \in \R(X)^*$ the quotient $f'/f$ is called
the \emph{logarithmic derivative} of $f$; it enjoys the following property.

\begin{proposition} \label{prop:LogarithmicDerivative}
  For every $f \in \R(X)^*$ we have 
  $\Ind_a(f'/f) = +1$ if $a$ is a zero of $f$,
  and $\Ind_a(f'/f) = -1$ if $a$ is a pole of $f$,
  whereas $\Ind_a(f'/f) = 0$ in all other cases.
\end{proposition}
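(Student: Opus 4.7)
The plan is to reduce everything to the local shape of $f$ near $a$ via its order of vanishing, and then compute the index of the resulting explicit rational function.

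First I would introduce the key factorization. Since $\R(X)$ is a field of rational functions, there is a unique integer $m \in \ZZ$ and a unique $g \in \R(X)^*$ with $g(a) \in \R^*$ such that $f = (X-a)^m g$. By convention, $m > 0$ means $a$ is a zero of $f$ of multiplicity $m$, $m < 0$ means $a$ is a pole of order $|m|$, and $m = 0$ means $f$ is regular and non-vanishing at $a$. The proposition's three cases correspond exactly to the sign of $m$.

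Next I would compute the logarithmic derivative directly: differentiating $f = (X-a)^m g$ with the product rule and Leibniz' rule gives $f' = m(X-a)^{m-1} g + (X-a)^m g'$, so dividing by $f$ yields the decomposition
\begin{equation*}
  \frac{f'}{f} = \frac{m}{X-a} + \frac{g'}{g}.
\end{equation*}
Because $g(a) \in \R^*$, the second summand $g'/g$ has no pole at $a$ (it extends continuously to $a$), so by the definition of $\Ind_a^\varepsilon$ it contributes $0$ to the local index. By the pointwise version of additivity -- which holds at a single point whenever at most one summand has a pole there, and which follows immediately from $\lim_a^\varepsilon(h + k) = \lim_a^\varepsilon h + k(a)$ when $k$ is continuous at $a$ -- we get $\Ind_a(f'/f) = \Ind_a(m/(X-a))$.

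Finally I would read off the index of $m/(X-a)$ by cases. For $m > 0$ the function jumps from $-\infty$ at $a^-$ to $+\infty$ at $a^+$, giving $\Ind_a = +\half - (-\half) = +1$. For $m < 0$ it jumps the other way, giving $-1$. For $m = 0$ the summand vanishes and $\Ind_a = 0$. This matches the three cases of the proposition. The only mildly subtle point is the pointwise additivity used in the decomposition step; it is not literally Proposition \ref{prop:RealIndexProperties}(c), which concerns interval indices, but it is an immediate consequence of the local definition of $\Ind_a^\pm$, so this is a minor bookkeeping rather than a real obstacle.
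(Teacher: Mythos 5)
Your proposal is correct and follows essentially the same route as the paper: factor $f = (X-a)^m g$ with $g(a) \in \R^*$, apply Leibniz to get $\frac{f'}{f} = \frac{m}{X-a} + \frac{g'}{g}$, observe that $g'/g$ has no pole at $a$ and hence contributes nothing, and read off $\Ind_a(f'/f) = \sign(m)$. Your additional remark about pointwise additivity of the local index makes explicit a step the paper leaves tacit, but the argument is the same.
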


\begin{proof}
  We have $f = (X-a)^m g$ with $m \in \ZZ$
  and $g \in \R(X)^*$ such that $g(a) \in \R^*$.
  By Leibniz' rule we obtain 
  $\frac{f'}{f} = \frac{m}{X-a} + \frac{g'}{g}$.
  The fraction $\frac{g'}{g}$ does not contribute 
  to the index because it does not have a pole at $a$.
  We conclude that $\Ind_a(f'/f) = \sign(m)$.
\end{proof}

\begin{corollary}
  For every $f \in \R(X)^*$ and $a < b$ in $\R$ the index $\Ind_a^b(f'/f)$ 
  equals the number of roots minus the number of poles of $f$ in $[a,b]$,
  counted without multiplicity.  Roots and poles on the boundary count for one half.
  \qed
\end{corollary}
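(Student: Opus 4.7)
The plan is to reduce the corollary to a pointwise statement and then assemble it via the defining formula
\[
    \Ind_a^b(f'/f) = \Ind_a^+(f'/f) + \sum_{x \in {]a,b[}} \Ind_x(f'/f) - \Ind_b^-(f'/f)
\]
from Definition \ref{def:CauchyIndex}. The interior sum is handled directly by Proposition \ref{prop:LogarithmicDerivative}: each interior point $x \in {]a,b[}$ contributes $+1$ if $x$ is a root of $f$, $-1$ if $x$ is a pole, and $0$ otherwise. Since only finitely many interior points make a non-zero contribution, the sum is well-defined and counts (zeros of $f$) minus (poles of $f$) in the open interval ${]a,b[}$, each with weight one.

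The substance of the argument is therefore the boundary analysis, which has to produce the claimed half-weights. For $c \in \{a,b\}$ I would use the local factorisation $f = (X-c)^m g$ with $m \in \ZZ$ and $g \in \R(X)^*$ such that $g(c) \in \R^*$. Leibniz' rule (as in the proof of Proposition \ref{prop:LogarithmicDerivative}) gives
\[
    \frac{f'}{f} = \frac{m}{X-c} + \frac{g'}{g},
\]
and the second summand has a finite limit at $c$, so it does not affect any one-sided index at $c$. For the left boundary $c=a$ and right limits, the term $m/(X-a)$ tends to $(\sign m) \cdot \infty$ as $x \to a^+$, so $\Ind_a^+(f'/f) = \tfrac{1}{2}\sign(m)$. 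For the right boundary $c=b$ and left limits, $m/(X-b)$ tends to $-(\sign m)\cdot\infty$ as $x \to b^-$, so $\Ind_b^-(f'/f) = -\tfrac{1}{2}\sign(m)$, and the formula subtracts this, giving a contribution of $+\tfrac{1}{2}\sign(m)$.

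In both cases $\sign(m)$ is $+1$ precisely when $c$ is a root of $f$, $-1$ precisely when $c$ is a pole, and $0$ otherwise, so the boundary points contribute with weight one half to (roots minus poles), as required. Summing the interior and the two boundary contributions yields the claimed equality; the case $a = b$ is trivial and the case $a>b$ follows from the sign convention $\Ind_a^b = -\Ind_b^a$. The only subtlety I anticipate is being careful with the sign of the $-\Ind_b^-$ term in the definition, which is exactly what makes the right endpoint count roots (not poles) with weight $+\tfrac{1}{2}$; this is a bookkeeping issue rather than a real obstacle.
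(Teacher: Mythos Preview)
Your proof is correct and is exactly the argument the paper has in mind: the corollary is stated with a bare \qed\ immediately after Proposition~\ref{prop:LogarithmicDerivative}, so the paper regards it as a direct consequence of that proposition together with Definition~\ref{def:CauchyIndex}. Your write-up simply makes explicit the boundary computation (via the same Leibniz decomposition $f'/f = m/(X-c) + g'/g$ used in the proof of Proposition~\ref{prop:LogarithmicDerivative}) that the paper leaves to the reader.
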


The corollary remains true for $f = \frac{R}{S}$ when $R=0$ or $S=0$,
with the convention that we count only \emph{isolated} roots and poles.
Polynomials $P \in \R[X]^*$ have no poles, whence $\Ind_a^b(P'/P)$ 
simply counts the number of roots of $P$ in $[a,b]$.

\subsection{The inversion formula} \label{sub:InversionFormula}

While the Cauchy index can be defined over any ordered field $\R$,
the following results require $\R$ to be real closed.
They will allow us to calculate the Cauchy index by Sturm chains (\sref{sub:SturmChains})
via iterated Euclidean division (\sref{sub:EuclideanChains}).

The starting point is the observation that 
the intermediate value property of polynomials $P \in \R[X]$ 
can then be reformulated quantitatively as $\Ind_a^b(\frac{1}{P}) = V_a^b(1,P)$.  
More generally, we have the following inversion formula 
of Cauchy \cite[\textsection I, Thm.\,I]{Cauchy:1837}.

\begin{theorem} \label{thm:Inversion}
  Let $\R$ be a real closed field. 
  For all $P,Q \in \R[X]$ and $a,b \in \R$ we have
  \begin{equation} \label{eq:InversionGeneral}
    \Ind_a^b\Bigl(\frac{Q}{P}\Bigr) + \Ind_a^b\Bigl(\frac{P}{Q}\Bigr) 
    = V_a^b\Bigl( 1, \frac{P}{Q} \Bigr) = V_a^b\Bigl( 1, \frac{Q}{P} \Bigr) .
  \end{equation}
  If $P$ and $Q$ do not have common zeros at $a$ or $b$, 
  then this simplifies to 
  \begin{equation} \label{eq:Inversion}
    \Ind_a^b\Bigl(\frac{Q}{P}\Bigr) + \Ind_a^b\Bigl(\frac{P}{Q}\Bigr) 
    = V_a^b\bigl( P, Q \bigr) .
  \end{equation}
\end{theorem}


If $a$ or $b$ is a pole of $\frac{P}{Q}$ or $\frac{Q}{P}$, then the signs 
in \eqref{eq:InversionGeneral} are evaluated using the convention $\sign(\infty) = 0$.
The inversion formula will follow as a special case 
from the product formula \eqref{eq:RealProductFormula},
but its proof is short enough to be given separately here.

\begin{proof}
  We can assume $a < b$ and $P,Q \in \R[X]^*$ and $\gcd(P,Q) = 1$,
  so each pole is a zero of either $P$ or $Q$, and 
  Equations \eqref{eq:InversionGeneral} and \eqref{eq:Inversion} become equivalent.  
  They are additive with respect to subdivision of $[a,b]$, by Proposition \ref{prop:RealIndexProperties}(a),
  so it suffices to treat the case where $[a,b]$ contains at most one pole.

  \textit{Global analysis away from poles:}
  Suppose that $[a,b]$ does not contain zeros of $P$ or $Q$.
  Then both indices $\Ind_a^b\bigl(\frac{Q}{P}\bigr)$ and 
  $\Ind_a^b\bigl(\frac{P}{Q}\bigr)$ vanish in the absence of poles,
  and the intermediate value property ensures that $P$ and $Q$ 
  are of constant sign on $[a,b]$, whence $V_a^b(P,Q) = 0$.

  \textit{Local analysis at a pole:}
  Suppose that $[a,b]$ contains a pole.
  Subdividing, if necessary, we can assume that this pole is either $a$ or $b$.  
  Applying the symmetry $X \mapsto a+b-X$, if necessary,
  we can assume that the pole is $a$.
  Since Equation \eqref{eq:Inversion} is symmetric 
  in $P$ and $Q$, we can assume that $P(a) = 0$.  
  We then have $Q(a) \ne 0$, whence $Q$ has constant 
  sign on $[a,b]$ and $\Ind_a^b\bigl(\frac{P}{Q}\bigr) = 0$.
  Likewise, $P$ has constant sign on $\ei{a}{b}$ and
  $\Ind_a^b\bigl(\frac{Q}{P}\bigr) = \Ind_a^+\bigl(\frac{Q}{P}\bigr)$.
  On the right hand side we find $V_a(P,Q) = \nicefrac{1}{2}$,
  and for $V_b(P,Q)$ two cases occur:
  \begin{itemize}
  \item
    If $V_b(P,Q) = 0$, then $\frac{Q}{P} >0$ on $\ei{a}{b}$,
    whence $\lim_a^+\bigl(\frac{Q}{P}\bigr) = +\infty$.
  \item
    If $V_b(P,Q) = 1$, then $\frac{Q}{P} < 0$ on $\ei{a}{b}$,
    whence $\lim_a^+\bigl(\frac{Q}{P}\bigr) = -\infty$.
  \end{itemize}
  In both cases we find $\Ind_a^+\bigl(\frac{Q}{P}\bigr) = V_a^b(P,Q)$,
  whence Equation \eqref{eq:Inversion} holds.
\end{proof}

\begin{Annotation}
  \textbf{(Local and global arguments)}
  The previous proof relies on a local argument around a pole $a$, in the neighbourhoods 
  $[a,a+\varepsilon]$ and $[a-\varepsilon,a]$ for some chosen $\varepsilon > 0$,
  and a global argument, for a given interval $[a,b]$ without poles.
  The local argument only uses continuity and is valid for polynomials over any ordered field.
  It is in the global argument that we need the intermediate value property.
  This interplay of local and global arguments is a recurrent theme 
  in the proofs of \sref{sub:ProductFormula} and \sref{sub:CountingComplexRoots}.
\end{Annotation}

\subsection{Sturm chains} \label{sub:SturmChains}

In the rest of this section we exploit the inversion formula \eqref{eq:Inversion}, 
and we will therefore continue to assume $\R$ to be real closed.
We can then calculate the Cauchy index $\Ind_a^b(\frac{R}{S})$ 
by iterated euclidean division (\sref{sub:EuclideanChains}).
The crucial condition is the following.

\begin{definition} \label{def:SturmChain}
  A sequence of polynomials $(S_0,\dots,S_n)$ in $\R[X]$ is a \emph{Sturm chain} 
  with respect to an interval $I \subset \R$ 
  if it satisfies Sturm's condition:
  \begin{equation} \label{eq:SturmCondition}
    \text{If $S_k(x)=0$ for some $x \in I$ and $0 < k < n$, then $S_{k-1}(x) S_{k+1}(x) < 0$.}
  \end{equation}
\end{definition}

We will usually not explicitly mention the interval 
if it is understood from the context, or if 
$(S_0,\dots,S_n)$ is a Sturm chain on all of $\R$.
For $n=1$ Condition \eqref{eq:SturmCondition}
is void and should be replaced by the requirement
that $S_0$ and $S_1$ have no common zeros.

\begin{theorem} \label{thm:SymmetricSturm}
  If $(S_0,S_1,\dots,S_{n-1},S_n)$ is a Sturm chain in $\R[X]$ with respect to $[a,b]$, then
  \begin{equation} \label{eq:SymetricSturm}
    \Ind_a^b \Bigl( \frac{S_1}{S_0} \Bigr) + \Ind_a^b \Bigl( \frac{S_{n-1}}{S_n} \Bigr)
    = V_a^b \bigl( S_0,S_1,\dots,S_{n-1},S_n \bigr) . 
  \end{equation}
\end{theorem}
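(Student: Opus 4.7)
My plan is to apply the inversion formula (Theorem \ref{thm:Inversion}) to each consecutive pair $(S_{k-1},S_k)$ and then to telescope. As a preliminary observation, Sturm's condition precludes any two consecutive terms of the chain from sharing a zero in $[a,b]$: if $S_k(x_0)=0$ for some $1 \le k \le n-1$, then $S_{k-1}(x_0) S_{k+1}(x_0) < 0$, and the edge cases (the pairs $(S_0,S_1)$ and $(S_{n-1},S_n)$) reduce to this by invoking the condition at $k=1$ and $k=n-1$ respectively. Hence the hypothesis of Theorem \ref{thm:Inversion} is satisfied for every pair, in particular at the endpoints $a$ and $b$. Summing the resulting identities over $k=1,\dots,n$, and using $\sum_{k=1}^n V_a^b(S_{k-1},S_k) = V_a^b(S_0,\dots,S_n)$ from the definition of $V$, I obtain
\[
\sum_{k=1}^n \Bigl[ \Ind_a^b\Bigl(\tfrac{S_k}{S_{k-1}}\Bigr) + \Ind_a^b\Bigl(\tfrac{S_{k-1}}{S_k}\Bigr) \Bigr] = V_a^b(S_0,\dots,S_n).
\]

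Next I would isolate the two boundary terms $\Ind_a^b(\tfrac{S_1}{S_0})$ and $\Ind_a^b(\tfrac{S_{n-1}}{S_n})$, which appear on the left-hand side for $k=1$ and $k=n$ respectively, and regroup the remaining $2(n-1)$ summands according to their common denominator $S_k$. For each $k$ with $1 \le k \le n-1$, the denominator $S_k$ appears exactly twice: once in $\Ind_a^b(\tfrac{S_{k-1}}{S_k})$ and once in $\Ind_a^b(\tfrac{S_{k+1}}{S_k})$. The desired identity \eqref{eq:SymetricSturm} thus reduces to the claim
\[
\Ind_a^b\Bigl(\tfrac{S_{k-1}}{S_k}\Bigr) + \Ind_a^b\Bigl(\tfrac{S_{k+1}}{S_k}\Bigr) = 0 \quad\text{for every}\quad 1 \le k \le n-1,
\]
which is exactly where Sturm's condition becomes indispensable.

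I expect the heart of the proof to be the pointwise verification of this local cancellation. At any $x_0 \in [a,b]$ with $S_k(x_0) \ne 0$, neither fraction has a pole at $x_0$, so the contributions vanish. At a zero $x_0$ of $S_k$ in $[a,b]$, I would factor $S_k = (X-x_0)^m g$ with $g(x_0) \ne 0$ and $m \ge 1$, and then check, by a short case analysis on the parity of $m$, that the local Cauchy index (or the one-sided half-index, at a boundary endpoint) of $\tfrac{S_j}{S_k}$ at $x_0$ is proportional to $\sign(S_j(x_0))$ for $j \in \{k-1,k+1\}$, with the same scalar factor $\sign(g(x_0))$. Sturm's condition then forces $\sign(S_{k-1}(x_0)) + \sign(S_{k+1}(x_0)) = 0$, and the pair cancels. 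The main technical subtlety, and the chief motivation for the extended Definition \ref{def:CauchyIndex} with half-indices at the endpoints, is to see that this cancellation functions uniformly whether $x_0$ lies in ${]a,b[}$ or coincides with $a$ or $b$; with the definition already arranged to count boundary behaviour by the same proportionality factor, no case split between interior and boundary zeros is needed.
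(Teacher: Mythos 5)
Your proposal is correct and follows essentially the same route as the paper: apply the inversion formula to each consecutive pair, then telescope by observing that the "middle" indices over a common denominator $S_k$ cancel, because at each zero of $S_k$ the Sturm condition forces $S_{k-1}$ and $S_{k+1}$ to have opposite signs. The paper organizes this as an induction on $n$ (spelling out $n=2$ and iterating), whereas you do a direct summation and regrouping, and you spell out the pointwise cancellation — including the boundary cases — in more detail than the paper's terse remark; but the governing idea is identical.
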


\begin{proof}
  For $n=1$ this is the inversion formula \eqref{eq:Inversion}.
  For $n = 2$ the inversion formula implies 
  \[
    \Ind_a^b \Bigl( \frac{S_1}{S_0} \Bigr) 
    + \Ind_a^b \Bigl( \frac{S_0}{S_1} \Bigr) 
    + \Ind_a^b \Bigl( \frac{S_2}{S_1} \Bigr)
    + \Ind_a^b \Bigl( \frac{S_1}{S_2} \Bigr)
    = V_a^b \bigl( S_0,S_1,S_2 \bigr) . 
  \]
  This is a telescopic sum:  
  contributions to the middle indices arise at zeros of $S_1$, 
  but at each zero of $S_1$ its neighbours $S_0$ and $S_2$
  have opposite signs, which means that these terms cancel each other.
  Iterating this argument, we obtain \eqref{eq:SymetricSturm} by induction on $n$.
\end{proof}


The following algebraic criterion for Sturm chains will be useful 
in \sref{sub:EuclideanChains} and \sref{sub:CountingComplexRoots}:

\begin{proposition} \label{prop:AlgebraicSturmCriterion}
  Consider a sequence $(S_0,\dots,S_n)$ in $\R[X]$ such that 
  \begin{equation} \label{eq:AlgebraicSturmCondition}
    A_k S_{k+1} + B_k S_{k} + C_k S_{k-1} = 0 
    \qquad\text{for}\qquad 0 < k < n,
  \end{equation}
  with $A_k,B_k,C_k \in \R[X]$ satisfying $A_k > 0$ 
  and $C_k \ge 0$ on some interval $I \subset \R$.
  Then $(S_0,\dots,S_n)$ is a Sturm chain on $I$ if and only if 
  the terminal pair $(S_{n-1},S_n)$ has no common zeros in $I$.
\end{proposition}

\begin{proof}
  We assume that $n \ge 2$. If $(S_{n-1},S_n)$ has a common zero, 
  then the Sturm condition \eqref{eq:SturmCondition} is obviously violated.
  Suppose that $(S_{n-1},S_n)$ has no common zeros in $I$.
  If $S_k(x) = 0$ for $x \in I$ and $0 < k < n$, then $S_{k+1}(x) \ne 0$.
  Otherwise Equation \eqref{eq:AlgebraicSturmCondition} would imply 
  that $S_k,\dots,S_n$ vanish at $x$, which is excluded by our hypothesis.
  Now, the equation $A_k(x) S_{k+1}(x) + C_k(x) S_{k-1}(x) = 0$ 
  with $A_k(x) S_{k+1}(x) \ne 0$ implies $C_k(x) S_{k-1}(x) \ne 0$.
  Using $A_k(x) > 0$ and $C_k(x) > 0$ we conclude that $S_{k-1}(x) S_{k+1}(x) < 0$.
\end{proof}

For many calulcations $A_k = C_k = 1$ suffices, as in \sref{sub:EuclideanChains},
but the general setting is more flexible because $A_k$ and $C_k$ can absorb 
positive factors and thus purge $S_{k+1}$ and $S_{k-1}$ of irrelevancy.
Sturm chains as in \eqref{eq:AlgebraicSturmCondition} also occur naturally for orthogonal polynomials.

\begin{Annotation}
  \textbf{(A historical example)}
  The following example is taken from Kronecker (1872) citing Gauss (1849)
  in his course \textit{Theorie der algebraischen Gleichungen}. 
  [Notes written by Kurt Hensel, archived at the University of Strasbourg,
  available at \link{num-scd-ulp.u-strasbg.fr/429}, page 165.]
 
  \begin{Example}
    We consider $P_0 = X^7 - 28 X^4 + 480$ and its derivative $P_1 = P_0' = 7X^2 ( X^4 - 16 X )$.
    We set $S_0 = P_0$ and $S_1 = X^4 - 16 X$, neglecting the positive factor $7X^2$.
    We wish to calculate $\Ind_a^b(\frac{P_1}{P_0}) = \Ind_a^b(\frac{S_1}{S_0})$ by constructing 
    a suitable Sturm chain.  Euclidean division yields $P_2 = (X^3-12) S_1  - S_0 = 192 X - 480$, 
    which we reduce to $S_2 = 2X-5$. 
    Likewise $P_3 = \frac{1}{16}( 8X^3 + 20 X^2 + 50 X - 3)S_2 - S_1 = \frac{15}{16}$ is reduced to $S_3 = 1$.
    We thus obtain a judiciously reduced Sturm chain $(S_0,S_1,S_2,S_3)$ 
    of the form $A_k S_{k+1} + B_k S_{k} + C_k S_{k-1} = 0$ with $A_k, C_k > 0$.
  \end{Example}
\end{Annotation}

\begin{Annotation}
  \textbf{(Orthogonal polynomials)}
  The Legendre polynomials $P_0,P_1,P_2,\dots$ in $\RR[X]$ are recursively
  defined by $P_0 = 1$, $P_1 = X$, and $(k+1) P_{k+1} + k P_{k-1} = (2k+1) X P_k$.
  They satisfy the orthogonality relation $\int_{-1}^{+1} P_m(x) P_n(x) \dd x = \frac{2}{2n + 1} \delta_{mn}$
  with respect to the inner product $\langle f, g \rangle = \int_{-1}^{+1} f(x) g(x) \dd x$.

  More generally, one can fix a measure $\mu$ on the real line $\RR$, 
  say with compact support, and consider the inner product 
  $\langle f, g \rangle = \int f(x) g(x) \dd\mu$.
  Orthogonality of $P_0,P_1,P_2,\dots$ means that 
  $\langle P_k , P_\ell \rangle = 0$ if $k \ne \ell$, and $>0$ if $k = \ell$.  
  If we further assume that $P_0,P_1,P_2,\dots$ are polynomials in $\RR[X]$ with $\deg P_k = k$,
  then orthogonality entails a three-term recurrence relation $A_k P_{k+1} + B_k P_k + C_k P_{k-1} = 0$
  with constants $A_k,C_k > 0$ and some polynomial $B_k$ of degree $1$, depending on $k$ and $\mu$.
  Orthogonal polynomials thus form a Sturm chain.
\end{Annotation}


\subsection{Euclidean chains} \label{sub:EuclideanChains}

The definition of Sturm chains is fairly general and 
could be used for more general functions than polynomials.
The crucial observation for polynomials is that the euclidean algorithm 
can be used to construct Sturm chains as follows.

Consider a rational function $f = \frac{R}{S} \in \R(X)^*$ 
represented by polynomials $R,S \in \R[X]^*$.
Iterated euclidean division produces a sequence of polynomials 
starting with $P_0=S$ and $P_1=R$, such that $P_{k-1} = Q_k P_k - P_{k+1}$ 
and $\deg P_{k+1} < \deg P_{k}$ for all $k=1,2,3,\dots$.
This process eventually stops when we reach $P_{n+1} = 0$,
in which case $P_n \sim \gcd(P_0,P_1)$.

Stated differently, this construction
is the expansion of $f$ into the continued fraction
\begin{equation*}
  f 
  = \frac{P_1}{P_0}
  = \frac{P_1}{Q_1 P_1 - P_2} 
  = \cfrac{1}{Q_1 - \cfrac{P_2}{P_1}} 
  = \cfrac{1}{Q_1 - \cfrac{1}{Q_2 - \cfrac{P_3}{P_2}}} = \dots 
  = \cfrac{1}{Q_1 - \cfrac{1}{Q_2 - \cfrac{\dots}{Q_{n-1} - \cfrac{1}{Q_n}}}} .
\end{equation*}

\begin{definition} \label{def:EuclideanChain}
  In this euclidean remainder sequence, the last polynomial $P_n \ne 0$ 
  divides all preceding polynomials $P_0,P_1,\dots,P_{n-1}$.  
  The \emph{euclidean chain} $(S_0,S_1,\dots,S_n)$ 
  associated to the fraction $\frac{R}{S} \in \R(X)^*$
  is defined by $S_k := P_k/P_n$ for $k=0,\dots,n$.
\end{definition}

We thus obtain $\frac{R}{S} = \frac{S_1}{S_0}$ with $\gcd(S_0,S_1) = S_n = 1$,
and by construction $(S_0,S_1,\dots,S_n)$ depends only on the fraction $\frac{R}{S}$ 
and not on the polynomials $R,S$ representing it.
By Proposition \ref{prop:AlgebraicSturmCriterion} the equations 
$S_{k-1} + S_{k+1} = Q_k S_k$ ensure that $(S_0,S_1,\dots,S_n)$ is a Sturm chain.

\begin{Annotation}
  \textbf{(The euclidean cochain)}
  The polynomials $(Q_1,\dots,Q_n)$ suffice to reconstruct the fraction $f$.
  This presentation is quite economic because they usually have low degree; 
  generically we expect $\deg(Q_k)=1$.

  We recover $(S_0,S_1,\dots,S_n)$ working backwards 
  from $S_{n+1} = 0$ and $S_n = 1$ by calculating 
  $S_{k-1} = Q_k S_k - S_{k+1}$ for all $k=n-1,\dots,0$.
  This procedure also provides an economic way to 
  evaluate $(S_0,S_1,\dots,S_n)$ at $a \in \R$.

  This indicates that, from an algorithmic point of view,
  the cochain $(Q_1,\dots,Q_n)$ is of primary interest.
  From a mathematical point of view it is often more convenient 
  to use the chain $(S_0,S_1,\dots,S_n)$.
\end{Annotation}

\subsection{Sturm's theorem} \label{sub:SturmTheorem}

We can now fix the following convenient notation.

\begin{definition} \label{def:SturmIndex}
  For $\frac{R}{S} \in \R(X)$ and $a,b \in \R$ we define the \emph{Sturm index} to be 
  \[ 
  \Sturm_a^b\Bigl( \frac{R}{S} \Bigr) := V_a^b\bigl( S_0,S_1,\dots,S_n \bigr) ,
  \]
  where $(S_0,S_1,\dots,S_n)$ is the euclidean chain associated to $\frac{R}{S}$.  
  We include two exceptional cases.
  If $S = 0$ and $R \ne 0$, the euclidean chain is $(0,1)$ of length $n=1$.
  If $R = 0$, we take the chain $(1)$ of length $n=0$.
  In both cases we obtain $\Sturm_a^b\bigl( \frac{R}{S} \bigr) = 0$. 
\end{definition}

This definition is effective in the sense that 
$\smash{\Sturm_a^b\bigl( \frac{R}{S} \bigr)}$ can immediately be calculated.
Definition \ref{def:CauchyIndex} of the Cauchy index 
$\smash{\Ind_a^b\bigl(\frac{R}{S}\bigr)}$, however, assumes knowledge of all roots of $S$ in $[a,b]$.
This difficulty is overcome by Sturm's celebrated theorem, generalized by Cauchy,
equating the Cauchy index with the Sturm index over a real closed field.

\begin{theorem}[Sturm 1829/35, Cauchy 1831/37] \label{thm:Sturm}
  For every pair $R,S \in \R[X]$ of polynomials 
  over a real closed field $\R$ we have 
  \begin{equation} \label{eq:CauchyEqualsSturm}
    \Ind_a^b\Bigl( \frac{R}{S} \Bigr) = \Sturm_a^b\Bigl( \frac{R}{S} \Bigr) .
  \end{equation}
\end{theorem}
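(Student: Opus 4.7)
The plan is to reduce Sturm's theorem to the symmetric Sturm formula already proved in Theorem \ref{thm:SymmetricSturm}, applied to the euclidean Sturm chain $(S_0,\dots,S_n)$ that Definition \ref{def:EuclideanSturmChain} attaches to the fraction $R/S$.

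The degenerate cases are a quick warm-up: if $R=0$ or $S=0$, then by the explicit conventions recorded in Definitions \ref{def:CauchyIndex} and \ref{def:SturmIndex} both $\Ind_a^b(R/S)$ and $\Sturm_a^b(R/S)$ are $0$, so the equality \eqref{eq:CauchyEqualsSturm} is trivial. I therefore assume $R,S\in\R[X]^*$ in what follows. The three-term recursion $S_{k-1}+S_{k+1}=Q_k S_k$, rewritten as $(1)\,S_{k+1}+(-Q_k)\,S_k+(1)\,S_{k-1}=0$, together with the normalisation $S_n=1$ obtained by dividing through by $P_n\sim\gcd(R,S)$, puts us in the setting of Proposition \ref{prop:AlgebraicSturmCriterion} with $A_k=C_k=1$ (so its sign hypothesis is vacuous) and $S_n$ nowhere zero. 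Hence $(S_0,\dots,S_n)$ is a Sturm chain on all of $\R$, in particular on $[a,b]$, and Theorem \ref{thm:SymmetricSturm} applies and gives
$$
\Ind_a^b\Bigl(\frac{S_1}{S_0}\Bigr)+\Ind_a^b\Bigl(\frac{S_{n-1}}{S_n}\Bigr)
= V_a^b(S_0,\dots,S_n)
= \Sturm_a^b\Bigl(\frac{R}{S}\Bigr),
$$
the last equality being Definition \ref{def:SturmIndex}.

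It then remains to identify the two Cauchy indices on the left. Since $S_0=S/P_n$ and $S_1=R/P_n$, the quotient $S_1/S_0$ equals $R/S$ as an element of $\R(X)$; because the Cauchy index depends only on the underlying rational function -- the one-sided limits $\lim_a^\pm f$ of Definition \ref{def:CauchyIndexPoint} are intrinsic to $f$ -- we may replace $\Ind_a^b(S_1/S_0)$ by $\Ind_a^b(R/S)$. The normalisation $S_n=1$ collapses $S_{n-1}/S_n$ to the polynomial $S_{n-1}$, which has no poles, so its Cauchy index vanishes identically. Substituting both facts into the displayed identity gives exactly $\Ind_a^b(R/S)=\Sturm_a^b(R/S)$.

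The main obstacle in this line of reasoning is really located upstream: the non-trivial content lies in Theorem \ref{thm:SymmetricSturm} (a telescoping induction built on the inversion formula) and in the verification that euclidean division produces a Sturm chain. What this final step contributes is only the observation that the normalisation $S_n=1$ is precisely what is needed to kill the second summand on the left-hand side of the symmetric identity, thereby converting it into the asymmetric Sturm formula.
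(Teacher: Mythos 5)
Your proof is correct and follows essentially the same route as the paper: handle the degenerate cases by convention, apply Theorem~\ref{thm:SymmetricSturm} to the euclidean Sturm chain, then use $S_1/S_0 = R/S$ and $S_n = 1$ to identify the two sides. The only minor difference is that you re-derive, inside the proof, why the euclidean chain satisfies the Sturm condition via Proposition~\ref{prop:AlgebraicSturmCriterion}, whereas the paper records that fact immediately after Definition~\ref{def:EuclideanSturmChain} and simply uses it.
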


\begin{proof}
  Equation \eqref{eq:CauchyEqualsSturm} is trivially true
  if $R = 0$ or $S = 0$, according to our definitions.
  We can thus assume $R,S \in \R[X]^*$.
  Let $(S_0,S_1,\dots,S_n)$ be the euclidean chain
  associated to the fraction $\frac{R}{S}$.
  Since $\frac{R}{S} = \frac{S_1}{S_0}$ and $S_n=1$,
  Theorem \ref{thm:SymmetricSturm} implies that
  \[
  \Ind_a^b\Bigl( \frac{R}{S} \Bigr) =
  \Ind_a^b\Bigl( \frac{S_1}{S_0} \Bigr) + \Ind_a^b\Bigl( \frac{S_{n-1}}{S_n} \Bigr)
  = V_a^b\bigl( S_0,S_1,\dots,S_n \bigr) 
  = \Sturm_a^b\Bigl( \frac{R}{S} \Bigr) .
  \qedhere
  \]
\end{proof}

This theorem is usually stated under the additional 
hypotheses that $\gcd(R,S) = 1$ and $S(a) S(b) \ne 0$.
Our formulation of Theorem \ref{thm:Sturm} does not require either of these conditions, 
because they are absorbed into our slightly refined definitions: 
$\gcd(R,S) = 1$ becomes superfluous by formulating Definitions \ref{def:CauchyIndex} 
and \ref{def:SturmIndex} such that both indices become well-defined on $\R(X)$.
The exception $S(a) S(b) = 0$ is anticipated in Definitions \ref{def:SignCounting} 
and \ref{def:CauchyIndex} by counting boundary points correctly.
Arranging these details is not only an aesthetic preoccupation:
it clears the way for a uniform treatment of the complex case 
in \sref{sec:Cauchy}. 

As an immediate consequence of \sref {sub:CountingRealRoots} we obtain 
Sturm's classical theorem \cite[\textsection 2]{Sturm:1835}.

\begin{corollary}[Sturm 1829/35] \label{cor:CountingRealRoots} 
  For every polynomial $P \in \R[X]^*$ we have
  \begin{equation} \label{eq:RootCount}
    \card\bigl\{\, x \in [a,b] \bigm| P(x)=0 \,\bigr\}
    = \Sturm_a^b\Bigl( \frac{P'}{P} \Bigr) ,
  \end{equation}
  where roots on the boundary count for one half.
  \qed
\end{corollary}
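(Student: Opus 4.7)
The plan is to assemble the corollary directly from two results already established in the excerpt: Proposition \ref{prop:LogarithmicDerivative} together with the corollary on the logarithmic derivative, and Sturm's theorem itself (Theorem \ref{thm:Sturm}).

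First I would apply the logarithmic derivative to $f = P$. Since $P \in \R[X]^*$ is a polynomial, it has no poles in $\R$; its only contributions to the local index $\Ind_x(P'/P)$ come from roots of $P$. By Proposition \ref{prop:LogarithmicDerivative}, each interior root of $P$ in $]a,b[$ contributes exactly $+1$ to $\Ind_a^b(P'/P)$, regardless of multiplicity, because the local factor $(X-a)^m$ with $m \ge 1$ yields $\sign(m) = 1$. By Definition \ref{def:CauchyIndex}, a boundary root at $a$ or $b$ only contributes through the one-sided index $\Ind_a^+$ or $-\Ind_b^-$, and each of these equals $\pm\tfrac12$; since $m \ge 1$ the sign is $+\tfrac12$ in both cases. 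Summing these contributions gives precisely $\card\{x \in [a,b] \mid P(x) = 0\}$ with boundary roots counted for one half, which is the first equality in \eqref{eq:RootCount}.

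For the second equality, I would invoke Theorem \ref{thm:Sturm} applied to the fraction $P'/P$, which immediately gives $\Ind_a^b(P'/P) = \Sturm_a^b(P'/P)$. No additional assumption such as $\gcd(P,P')=1$ is required, because both our Cauchy and Sturm indices were defined on all of $\R(X)$, so any common factors of $P$ and $P'$ reduce out harmlessly in forming the euclidean Sturm chain.

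I do not anticipate any real obstacle here; the only point that deserves a moment's care is the bookkeeping of boundary points and the fact that the logarithmic derivative counts each root only once (not with multiplicity), so that the left-hand side of \eqref{eq:RootCount} should be read as $\card$ and not $\mcard$. The definitions in \S\ref{sec:Sturm} have already been arranged (cf.\ Definitions \ref{def:SignCounting} and \ref{def:CauchyIndexPoint}) precisely to absorb these boundary conventions, so the corollary drops out as a formal consequence of the two results cited above.
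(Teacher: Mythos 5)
Your proposal is correct and matches the paper's (unspelled-out) reasoning: the first equality is exactly the corollary to Proposition~\ref{prop:LogarithmicDerivative} specialized to $f=P$ (a polynomial, hence no poles), and the second is a direct application of Theorem~\ref{thm:Sturm} to the fraction $P'/P$, with the boundary conventions and the $\gcd(P,P')$ issue already absorbed into the definitions as you note. The paper simply labels the corollary an ``immediate consequence'' without writing out the details you supply.
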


By the usual bisection method, Formula \eqref{eq:RootCount} provides 
an algorithm to locate all real roots of any given real polynomial.
Once the roots are well separated, one can switch to Newton's method
(\sref{sub:NewtonMethod}), which is simpler to apply and converges much faster.

\begin{remark}
  Formula \eqref{eq:RootCount} counts real roots of $P$ 
  without multiplicity. Multiplicities can be counted 
  by observing that $x$ is a root of $P$ of multiplicity $m \ge 2$ 
  if and only if $x$ is a root of $\gcd(P,P')$ of multiplicity $m-1$. 
  See Rahman--Schmeisser \cite[Thm.\,10.5.6]{RahmanSchmeisser:2002}.
\end{remark}

\begin{remark}
  The intermediate value property is essential 
  for \eqref{eq:CauchyEqualsSturm} and \eqref{eq:RootCount}.
  Over $\QQ$, for example, the function $f(x) = 2x/(x^2-2)$ has no poles, whence $\Ind_1^2(f) = 0$.  
  A Sturm chain is given by $S_0 = X^2-2$ and $S_1 = 2X$ and $S_2 = 2$, 
  whence $V_1^2(S_0,S_1,S_2) = 1$.  Here the Sturm index does not count 
  zeros and poles in $\QQ$ but in the real closure $\QQ^c$.
\end{remark}

\begin{remark}
  Sturm's theorem can be seen as an algebraic analogue 
  of the fundamental theorem of calculus. 
  It reduces a $1$--dimensional counting problem on the interval $[a,b]$ 
  to a $0$--dimensional counting problem on the boundary $\{a,b\}$.
  In \sref{sec:Cauchy} we will generalize this to the complex realm, 
  reducing a $2$--dimensional counting problem on a rectangle $\Gamma$ 
  to a $1$--dimensional counting problem on the boundary $\partial\Gamma$.
\end{remark}

\begin{Annotation}
  \textbf{(Invariance)}
  Over a real closed field $\R$ we can strengthen Proposition \ref{prop:RealIndexProperties}(b):
  the Cauchy index satisfies $\Ind_a^b\bigl( f \circ g \bigr) = \Ind_{g(a)}^{g(b)}\bigl( f \bigr)$ 
  for all $f,g \in \R(X)$ where $g$ has no poles in $[a,b]$.

  Assume $f = R/S$ and $g = P/Q$ with $P,Q,R,S \in \R[X]$ such that $\gcd(P,Q)=1$ and $\gcd(R,S)=1$.
  Since $g$ has no poles, $Q$ has no roots in $[a,b]$.  If $(S_0,S_1,\dots,S_n)$ in $\R[X]$ 
  is a Sturm chain for $f = R/S$ on $[a,b]$ with $S_n$ constant, then $(P_0, P_1, \dots, P_n)$ 
  defined by $P_k = Q^m S_k ( P / Q )$ with $m = \max\{\deg S_0,\dots,\deg S_n\}$ 
  is a Sturm chain for $f \circ g = R(P/Q) / S(P/Q)$ with $P_n$ constant.
  As in Theorem \ref{thm:Sturm} we thus find
  \begin{multline*}
    \Ind_a^b\bigl( f \circ g \bigr) 
    = V_a^b\bigl( P_0, P_1, \dots, P_n \bigr) 
    = V_a^b\bigl( S_0 \circ g , S_1 \circ g , \dots, S_n \circ g \bigr)
    = V_{g(a)}^{g(b)}\bigl( S_0, S_1, \dots, S_n \bigr) 
    = \Ind_{g(a)}^{g(b)}\bigl( f \bigr) .
  \end{multline*}
  Contrary to the fractional linear transformations 
  of Proposition \ref{prop:RealIndexProperties}(b),
  the intermediate value property is essential.
  Consider for example $f(x) = \frac{1}{x-2}$ and $g(x) = x^2$ over $\QQ$.
  Then $\Ind_1^2(f \circ g) = 0$ differs from $\Ind_{g(1)}^{g(2)}(f) = 1$.
\end{Annotation}

\subsection{Pseudo-euclidean division} \label{sub:PseudoEuclideanDivision}

Euclidean division works for polynomials over a field.
In \sref{sub:CountingComplexRoots} we consider polynomials $S,P \in \R[Y,X] = \K[X]$ over $\K = \R[Y]$.
To this end we introduce pseudo-euclidean division over an integral ring $\K$:
for all $S,P \in \K[X]$ with $P \ne 0$ there exists a unique pair $Q^*,R^* \in \K[X]$ 
such that $c^d S = P Q^* - R^*$ and $\deg R^* < \deg P$, 
where $c \in \K$ is the leading coefficient of $P$
and $d = \max\{\, 0, 1 + \deg S - \deg P \,\}$.

When working over a field $\F \supset \K$, the leading coefficient $c \ne 0$ is invertible in $\F$,
and we can divide $c^d S = P Q^* - R^*$ by $c^d$ to recover $S = P Q - R$, where $Q = Q^*/c^d$ and $R = R^*/c^d$.
Pseudo-euclidean division may nevertheless be more convenient.
For polynomials in $\QQ[X]$, for example, it is often more efficient 
to clear denominators and to work in $\ZZ[X]$ in order to avoid coefficient swell;
see \cite[\textsection 6.12]{GathenGerhard:2003}.

For Sturm chains it is advantageous to have $c^d S = P Q^* - R^*$ with $d$ even.
In a typical Sturm chain we would expect $\deg S = \deg P + 1$ and thus $d = 2$.
If $d$ happens to be odd, we can multiply $Q^*$ and $R^*$ by $c$ and augment $d$ by $1$.
Starting from $S_0,S_1 \in \K[X]$ we can thus construct a chain 
$S_0,S_1,\dots,S_n \in \K[X]$ with $S_{k+1} = B_k S_{k} - c_k^2 S_{k-1}$
as in Proposition \ref{prop:AlgebraicSturmCriterion}.

\begin{Annotation}
  \textbf{(Pseudo-euclidean division)}
  For every ring $\K$, the degree $\deg \colon \K[X] \to \NN \cup \{-\infty\}$ satisfies:
  \begin{enumerate}
  \item
    $\deg(P+Q) \le \sup\{\, \deg P, \deg Q \,\}$, with equality 
    iff $\deg P \ne \deg Q$ or $\lc(P) + \lc(Q) \ne 0$.
  \item
    $\deg(PQ) \le \deg P + \deg Q$, with equality 
    iff $P = 0$ or $Q = 0$ or $\lc(P) \cdot \lc(Q) \ne 0$.
  \end{enumerate}

  If $\K$ is integral, then $\deg(PQ) = \deg P + \deg Q$ 
  and $\lc(PQ) = \lc(P) \cdot \lc(Q)$ for all $P,Q \in \K[X]^*$,
  and the polynomial ring $\K[X]$ is again integral.
  Moreover, for every $S \in \K[X]$ and $P \in \K[X]^*$ 
  there exists a unique pair $Q,R \in \K[X]$ such that
  $c^d S = P Q - R$ and $\deg R < \deg P$, where 
  $c = \lc(P)$ and $d = \max\{\, 0, 1 + \deg S - \deg P \,\}$.
  
  \textit{Existence:}
  We proceed by induction on $d$.
  If $d=0$, then $\deg S < \deg P$ and $Q=0$ and $R=S$ suffice.
  If $d\ge1$, then we set $M := \lc(S) \cdot X^{\deg S - \deg P}$
  and $\tilde{S} := c S - P M$.
  We see that $\deg(S) = \deg(cS) = \deg(PM)$ and $\lc(cS) = \lc(PM)$,
  whence $\deg\tilde{S} < \deg S$.  By hypothesis, there exists
  $\tilde{Q}, R \in A[X]$ such that $c^{d-1} \tilde{S} = P \tilde{Q} + R$.  
  We conclude that $c^d S = c^{d-1} \tilde{S} + c^{d-1} P M = P Q + R$ 
  with $Q = \tilde{Q} + c^{d-1} M$.

  \textit{Uniqueness:}
  For $PQ+R = PQ'+R'$ with $\deg R < \deg P$ and $\deg R'< \deg P$, 
  we find $P(Q-Q') = R'-R$, whence $\deg P + \deg(Q-Q') = \deg[ P(Q-Q') ] = \deg(R-R') < \deg P$.
  This is only possible for $\deg(Q-Q') < 0$, which means $Q-Q'=0$.
  We conclude that $Q = Q'$ and $R = R'$.
\end{Annotation}


\begin{Annotation}
  \textbf{(Cauchy functions)}
  Sturm's theorem \ref{cor:CountingRealRoots} works miraculously well for polynomials, 
  but like the Budan--Fourier Theorem \ref{thm:DescartesBudanFourier} it is not restricted to polynomials.
  Continuing Remark \ref{rem:DifferentiableFunctions}, 
  if $f \colon [a,b] \to \R$ is $n$-times differentiable, 
  $f^{(n)}$ is of constant sign and each $f^{(k)}$ has the maximal number 
  of $n-k$ distinct zeros, then $f,f',\dots,f^{(n)}$ is a Sturm chain
  in the sense of \eqref{eq:SturmCondition}, whence the zeros of $f$ 
  can then be counted on subintervals and located using \eqref{eq:RootCount}.
  
  More generally, we call $f \colon [a,b] \to \R$ a \emph{Cauchy function}
  if $f$ does not change sign without passing through zero
  and on every interval $I_k = [t_{k-1},t_k]$ of some subdivision 
  $a = t_0 < t_1 < \dots < t_m =b$ the restriction $f_k = f|I_k$ 
  is $n_k$-times differentiable with $f_k^{\smash{(n_k)}}$ of constant sign.
  For example, over $\RR$ every real-analytic function $f \colon [a,b] \to \RR$ is a Cauchy function;
  in fact, quasi-analytic functions suffice, i.e., $C^\infty$ functions such that 
  $f^{(n)}(x_0) = 0$ for some $x_0$ and all $n \ge 0$ implies $f = 0$. 
  Over a real closed field $\R$ one can consider piecewise polynomial functions $f \colon [a,b] \to \R$, 
  or more generally Nash functions \cite[chap.\,8]{BochnakCosteRoy:1998}, 
  i.e., $C^\infty$ functions that are semi-algebraic, for example $f(x) = \sqrt{1+x^2}$.
  
  Sturm's theory extends smoothly: 
  Given Cauchy functions $f,g \colon [a,b] \to \R$, we assume their quotient 
  $f/g$ to be reduced, which means that $f$ and $g$ have no common zeros.
  The Cauchy index $\Ind_a^b(f/g)$ can then be defined as above,
  the inversion formula holds verbatim, and there exists a Sturm chain $S_0=g,S_1=f,S_2,\dots,S_n$ 
  of Cauchy functions with $S_{k-1} + S_{k+1} = Q_k S_k$ for $0 < k < n$ such that $S_n$ is of constant sign on $[a,b]$. 
  One can explicitly construct such a chain if one knows the zeros of $f$ and $g$ in $[a,b]$.
  This underlines that the remarkable fact about polynomials is not so much the 
  existence of Sturm chains but their construction from the euclidean algorithm,
  without prior knowledge of any zeros.
\end{Annotation}

\begin{Annotation}
  \textbf{(Sketch of proof)}
  Every Cauchy function has only a finite number of zeros, 
  see Remark \ref{rem:DifferentiableFunctions}, or perhaps intervals of zeros.
  We can assume that only $S_0$ has zeros in $U_1 = [a,a_1] \cup [b_1,b]$, with $S_0(a_1) \ne 0$ and $S_0(b_1) \ne 0$.
  We choose $u_1,v_1 \in \R$ and define $Q_1 \colon [a,b] \to \R$ by  $Q_1(x) = 0$ for $x \in [a_1,b_1]$, 
  and $Q_1(x) = u_1 (a_1-x)$ for $x \in [a,a_1]$, and $Q_1(x) = v_1 (x-b_1)$ for $x \in [b_1,b]$, 
  such that $S_2 = Q_1 S_1 - S_0$ has no zeros in $U_1$.  To this end it suffices 
  to choose $u_1,v_1$ large enough and to arrange their signs such that 
  $u_1 S_1(a_1) S_0(a_1) < 0$ and $v_1 S_1(b_1) S_0(b_1) < 0$.
  On $[a_1,b_1]$ we have $Q_1 = 0$ and thus $S_2 = -S_0$, 
  in particular $S_1(t) = 0$ implies $S_0(t) S_2(t) < 0$.
  By construction, $S_2$ is again a Cauchy function.
  Continuing like this we can eliminate zeros of $S_1$ that are not bounded by zeros of $S_2$
  to obtain $S_3 = Q_2 S_2 - S_1$.  By induction on the number of (intervals of) zeros we finally reach $S_n$ without zeros.
\end{Annotation}

\begin{Annotation}
  \textbf{(Is there a natural construction?)}
  In the proof sketched above we choose $Q_k$ piecewise linear, 
  which is allowed in our definition of Cauchy functions but remains somewhat artificial.  
  It would be nice to have a more natural construction, for example 
  for real-analytic functions where $Q_k$ should again be real-analytic.
  Moreover, our construction assumes knowledge of the zeros of $S_0,S_1,\dots$, 
  which is contrary to the envisaged application of finding the zeros!
  The ideal model is, of course, euclidean division of polynomials.
\end{Annotation}


\section{Cauchy's theorem for complex polynomials} \label{sec:Cauchy}

We continue to work over a real closed field $\R$ and now
consider its complex extension $\C = \R[i]$ where $i^2=-1$.
In this section we define the algebraic winding number 
and use it to prove Cauchy's theorem (Corollary \ref{cor:CountingComplexRoots}).
To this end we establish the product formula \eqref{eq:RealProductFormula}, 
which seems to be new.  It ensures, for example, that the algebraic winding number
can cope with roots on the boundary, as already emphasized in Theorem \ref{thm:RootCounting}.



\subsection{Real and complex fields} \label{sub:ComplexFields}

Let $\R$ be an ordered field.
For every $x \in \R$ we have $x^2 \ge 0$, whence $x^2+1 > 0$.
The polynomial $X^2 + 1$ is thus irreducible in $\R[X]$,
and the quotient $\C = \R[X]/(X^2+1)$ is a field.
It is denoted by $\C = \R[i]$ with $i^2 = -1$.
Each element $z \in \C$ can be uniquely written as $z = x + y i$ with $x,y \in \R$.  
We can thus identify $\C$ with $\R^2$ via the map $\R^2 \to \C$, 
$(x,y) \mapsto z = x + y i$, and define $\re(z) := x$ and $\im(z) := y$.

Using this notation, addition and multiplication in $\C$ are given by
\begin{align*}
  (x + y i) + (x' + y' i) & = (x+x') + (y+y')i , \\
  (x + y i) \cdot (x' + y' i) & = (x x' - y y') + (x y' + x' y) i .
\end{align*}

The ring automorphism $\R[X] \to \R[X]$, $X \mapsto -X$, fixes $X^2+1$ 
and thus descends to a field automorphism $\C \to \C$
that maps each $z = x + y i$ to its conjugate $\bar{z} = x - y i$.
We have $\re(z) = \frac{1}{2}(z+\bar{z})$ and $\im(z) = \frac{1}{2i}(z-\bar{z})$.
The product $z\bar{z} = x^2 + y^2 \ge 0$ 
vanishes if and only if $z = 0$.  For $z \ne 0$ we thus find 
\[ z^{-1} = \frac{\bar{z}}{z\bar{z}} = \frac{x}{x^2+y^2} - \frac{y}{x^2+y^2} i . \]

If $\R$ is real closed, then every $x \in \R_{\ge0}$ has a square root $\sqrt{x} \in \R_{\ge 0}$. 
For $z \in \C$ we can thus define $\abs{z} := \sqrt{z\bar{z}}$,
which extends the absolute value of $\R$.
For all $u,v \in \C$ we have:
\begin{enumerate}
  \setcounter{enumi}{-1}
\item $\abs{\re(u)} \le \abs{u}$ and $\abs{\im(u)} \le \abs{u}$,
\item $\abs{u} \ge 0$, and $\abs{u} = 0$ if and only if $u=0$,
\item $\abs{u \cdot v} = \abs{u} \cdot \abs{v}$ and $\abs{\bar{u}} = \abs{u}$,
\item $\abs{u + v} \le \abs{u} + \abs{v}$.
\end{enumerate}

All verifications are straightforward.  The triangle inequality (3)
can be derived from the preceding properties as follows.
If $u+v=0$, then (3) follows from (1).  If $u+v \ne 0$, then 
$1 = \frac{u}{u+v} + \frac{v}{u+v}$, and applying (0) and (2) we find
\begin{equation*}
  1 = \re\Bigl(\frac{u}{u+v}\Bigr) + \re\Bigl(\frac{v}{u+v}\Bigr) 
  \le \Bigl|\frac{u}{u+v}\Bigr| + \Bigl|\frac{v}{u+v}\Bigr|
  = \frac{\abs{u}}{\abs{u+v}} + \frac{\abs{v}}{\abs{u+v}}.
\end{equation*}


\subsection{Real and complex variables} \label{sub:ComplexVariables}

Just as we identify $(x,y) \in \R^2$ with $z = x + i y \in \C$, we consider
$\C[Z]$ 
as a subring of $\C[X,Y]$ with $Z = X + i Y$.  
The conjugation on $\C$ extends to a ring automorphism of $\C[X,Y]$ fixing $X$ and $Y$,
so that the conjugate of $Z = X + i Y$ is $\overline{Z} = X - i Y$.
In this sense, $X$ and $Y$ are real variables, whereas $Z$ is a complex variable.

Every polynomial $F \in \C[X,Y]$ can be uniquely decomposed as $F = R + i S$ with $R,S \in \R[X,Y]$,
namely $R = \re F := \frac{1}{2}(F+\overline{F})$ and $S = \im F := \frac{1}{2i}(F-\overline{F})$.
In particular, we thus recover the familiar formulae $X = \re Z$ and $Y = \im Z$.

For $F,G \in \C[X,Y]$ we set $F \circ G := F( \re G, \im G )$.
The map $F \mapsto F \circ G$ is the unique ring endomorphism 
$\C[X,Y] \to \C[X,Y]$ that maps $Z \mapsto G$
and is equivariant with respect to conjugation,
because $Z \mapsto G$ and $\overline{Z} \mapsto \overline{G}$ 
are equivalent to $X \mapsto \re G$ and $Y \mapsto \im G$.

\subsection{The algebraic winding number} \label{sub:AlgebraicWindingNumber}

Given a polynomial $P \in \C[X]$ and two parameters $t_0 < t_1$ in $\R$, 
the map $\gamma \colon [t_0,t_1] \to \C$ defined by $\gamma(t) = P(t)$ 
describes a polynomial path in $\C$.  We define its winding number $\Wind(\gamma)$
to be half the Cauchy index of $\frac{\re P}{\im P}$ on $[t_0,t_1]$:
\[
\cind{[t_0,t_1]}{P} := \textstyle 
\frac{1}{2} \Ind_{t_0}^{t_1}\bigl( \frac{\re P}{\im P} \bigr) .
\]

This definition is geometrically motivated as follows.
Assuming that $\gamma(t) \ne 0$ for all $t \in [t_0,t_1]$,
the winding number $\Wind(\gamma)$ counts the number of turns 
that $\gamma$ performs around $0$.  It changes by $+\half$ 
each time $\gamma$ crosses the real axis in counter-clockwise 
direction, and by $-\half$ if the passage is clockwise.
Our algebraic definition is slightly more comprehensive than 
the geometric one since it does not exclude zeros of $\gamma$.

\begin{definition} \label{def:PiecewisePolynomialLoop}
  Consider a subdivision $0 = t_0 < t_1 < \dots < t_n = 1$ in $\R$ 
  and polynomials $P_1,\dots,P_n \in \C[X]$ that 
  satisfy $P_k(t_k) = P_{k+1}(t_k)$ for $k=1,\dots,n-1$.
  This defines a \emph{piecewise polynomial path} $\gamma \colon [0,1] \to \C$ 
  by $\gamma(t) := P_k(t)$ for $t \in [t_{k-1},t_k]$.
  If $\gamma(a) = \gamma(b)$, then $\gamma$ is called a \emph{closed path} or \emph{loop}.
  Its \emph{winding number} is defined as
  \begin{equation}
    \Wind(\gamma) := \sum_{k=1}^n \cind{[t_{k-1},t_k]}{P_k} .
  \end{equation}
  
  This is well-defined according to Proposition \ref{prop:RealIndexProperties}(a),
  because it depends only on the path $\gamma \colon [0,1] \to \R$ 
  and not on the chosen subdivision of the interval $[0,1]$.
\end{definition}


\subsection{Normalization} \label{sub:Normalization}

The following notation will be convenient.  Given $a,b \in \C$, 
the map $\gamma \colon [0,1] \to \C$ defined by $\gamma(x) = a + x(b-a)$ 
joins $\gamma(0) = a$ and $\gamma(1) = b$ by a straight line segment.
Its image will be denoted by $[a,b] := \gamma([0,1])$. 
For $a \ne b$ we set $\ee{a}{b} := \gamma(\ee{0}{1})$.

For $F \in \C[X,Y]$, we set $\cind{[a,b]}{F} := \Wind(F \circ \gamma)$.
This is the winding number of the path traced 
by $F(z)$ as $z$ runs from $a$ straight to $b$.
According to Proposition \ref{prop:RealIndexProperties}(b),
the reverse orientation yields $\cind{[b,a]}{F} = - \cind{[a,b]}{F}$.

A \emph{rectangle} (with sides parallel to the axes) 
is a subset $\Gamma = [x_0,x_1] \times [y_0,y_1]$
in $\C = \R^2$ with $x_0 < x_1$ and $y_0 < y_1$ in $\R$.  
Its \textit{interior} is $\Int\Gamma = \ee{x_0}{x_1} \times \ee{y_0}{y_1}$.
Its \emph{boundary} $\partial\Gamma$ consists of the four vertices
$a = (x_0,y_0)$, $b = (x_1,y_0)$, $c = (x_1,y_1)$, $d = (x_0,y_1)$,
and the four edges $\ee{a}{b}$, $\ee{b}{c}$, $\ee{c}{d}$, $\ee{d}{a}$ 
between them (see Figure \ref{fig:WindingNumber}).

\begin{definition} \label{def:AlgebraicWindingNumber}
  Given a polynomial $F \in \C[X,Y]$ and a rectangle $\Gamma \subset \C$ as above, we set
  \begin{equation}
    \cind{\partial\Gamma}{F} := \cind{[a,b]}{F} + \cind{[b,c]}{F} + \cind{[c,d]}{F} + \cind{[d,a]}{F} .
  \end{equation}
  Stated differently, we have $\cind{\partial\Gamma}{F} = \Wind(F \circ \gamma)$
  where the path $\gamma \colon [0,1] \to \C$ linearly interpolates between 
  the vertices $\gamma(0) = a$, $\gamma(\nicefrac{1}{4}) = b$, $\gamma(\nicefrac{1}{2}) = c$,
  $\gamma(\nicefrac{3}{4}) = d$, and $\gamma(1) = a$. 
\end{definition}


\begin{lemma}[subdivision] \label{lem:Subdivision}
  Suppose that we subdivide $\Gamma = [x_0,x_2] \times [y_0,y_2]$ 
  \begin{itemize}
  \item horizontally into $\Gamma' = [x_0,x_1] \times [y_0,y_2]$ 
    and $\Gamma'' = [x_1,x_2] \times [y_0,y_2]$, 
  \item or vertically into $\Gamma' = [x_0,x_2] \times [y_0,y_1]$ 
    and $\Gamma'' = [x_0,x_2] \times [y_1,y_2]$,
  \end{itemize}
  where $x_0 < x_1 < x_2$ and $y_0 < y_1 < y_2$. 
  Then $\cind{\partial\Gamma}{F} = \cind{\partial\Gamma'}{F} + \cind{\partial\Gamma''}{F}$.
\end{lemma}

\begin{proof}
  This follows from Definition \ref{def:AlgebraicWindingNumber} by 
  one-dimensional subdivision (Proposition \ref{prop:RealIndexProperties})
  and cancellation of the two internal edges having opposite orientations.
\end{proof}

We will frequently use subdivision in the sequel.
As a first application we use it to establish the normalization \eqref{property:Normalization} 
of the algebraic winding number stated in Theorem \ref{thm:AlgebraicWindingNumber}.

\begin{proposition} \label{prop:Normalization}
  For a linear polynomial $F = Z - z_0$ with $z_0 \in \C$ we find
  \[
  \cind{\partial\Gamma}{F} = \begin{cases}
    1 & \text{if $z_0$ is in the interior of $\Gamma$,} \\
    \nicefrac{1}{2} & \text{if $z_0$ is in one of the edges of $\Gamma$,} \\
    \nicefrac{1}{4} & \text{if $z_0$ is in one of the vertices of $\Gamma$,} \\
    0 & \text{if $z_0$ is in the exterior of $\Gamma$.} 
  \end{cases}
  \]
\end{proposition}

\begin{proof}
  By subdivision, all configurations can be reduced 
  to the case where $z_0$ is a vertex of $\Gamma$.
  By symmetry, translation, and homothety we can assume 
  that $z_0 = a = 0$, $b=1$, $c=1+i$, $d=i$.  
  Here an easy explicit calculation shows that 
  $\cind{\partial\Gamma}{F} = \nicefrac{1}{4}$ by adding 
  \begin{align*}
    \cind{[a,b]}{F} & = \textstyle \cind{[0,1]}{X} = \frac{1}{2}\Ind_0^1(\frac{X}{0}) = 0 , \\
    \cind{[b,c]}{F} & = \textstyle \cind{[0,1]}{1+iX} = \frac{1}{2}\Ind_0^1(\frac{1}{X}) = \frac{1}{4} , \\
    \cind{[c,d]}{F} & = \textstyle \cind{[0,1]}{1+i-X} = \frac{1}{2}\Ind_0^1(\frac{1-X}{1}) = 0 , \text{and} \\
    \cind{[d,a]}{F} & = \textstyle \cind{[0,1]}{i-iX} = \frac{1}{2}\Ind_0^1(\frac{0}{1-X}) = 0 .
    \qedhere
  \end{align*}
\end{proof}

\begin{Annotation}
  \textbf{(Normalization)}
  The factor $\half$ in the definition of the winding number 
  compared to the Cauchy index is chosen so as to achieve 
  the normalization of Proposition \ref{prop:Normalization}.
  It also has a natural geometric interpretation.
  Compare the circle $\S = \{\, z \in \C \colon \abs{z} = 1 \,\}$ 
  with the projective line $\P\R$ of Annotation \ref{anno:WindingNumber}.
  The winding number $\Wind(\gamma)$ of a path $\gamma \colon [0,1] \to \C^*$
  is defined using the map $q \colon \C^* \to \P\R$, $(x,y) \mapsto [x:y]$.
  The quotient map $q$ is the composition of the deformation 
  retraction $r \colon \C^* \to \S$, $z \mapsto z/\abs{z}$,
  and the two-fold covering $p \colon \S \to \P\R$, $(x,y) \mapsto [x:y]$.
  This means that \emph{one} full circle in $\C^*$ maps to \emph{two}
  full circles in $\P\R$. 
\end{Annotation}

\begin{Annotation} 
  \textbf{(Angles)} \label{anno:AverageWindingNumber}
  Proposition \ref{prop:Normalization} generalizes from rectangles to convex polygons,
  and then to arbitrary polygons by subdivision. The only subtlety occurs 
  when $z_0$ is a vertex of the boundary $\partial\Gamma$:
  in general, we find $\cind{\partial\Gamma}{Z} \in \{0, \nicefrac{1}{4}, \nicefrac{1}{2}, \nicefrac{3}{4}, 1 \}$,
  and one can easily construct examples showing that all possibilities are realized:
  \begin{figure}[ht]
    \centering
    \includegraphics[width=\linewidth]{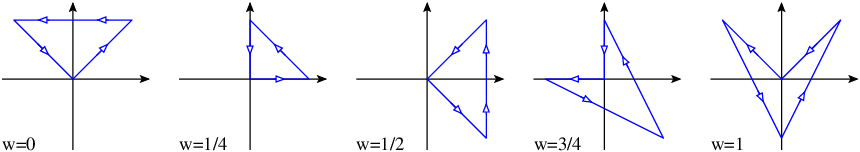}
  \end{figure}

  These examples illustrate how the result depends on 
  the angle at $0$ and its incidence with the real axis.
  The reference to the real axis breaks the rotational symmetry, and so 
  $\Wind(\gamma)$ may differ from $\Wind(c \gamma)$ for some $c \in \C^*$. 
  Over $\CC$ the average value $\mind(\gamma) = \int_0^1 \Wind( e^{2 \pi i t} \gamma ) \, \dd t \in [0,1]$ 
  measures the angle at $0$. For $\C = \R[i]$ over a real closed field $\R$ we can likewise define 
  $\mind(\gamma) := \lim_{N \to \infty} \frac{1}{N} \sum_{k=0}^{N-1} \Wind( e^{2 \pi i / N} \gamma ) \in \RR$
  for every piecewise polynomial loop $\gamma \colon [0,1] \to \C$.
  Measuring angles in this way does not follow the paradigm of effective calculation 
  emphasized here, but the definition of $\mind(\gamma)$ may still be useful in some other context.
  For the purpose of this article, however, it is only an amusing curiosity
  and will not be further developed.
\end{Annotation}



\subsection{The product formula} \label{sub:ProductFormula}

The product of two polynomials $F = P + i Q$ 
and $G = R + i S$ with $P,Q,R,S \in \R[X]$
is given by $F G = (P R - Q S) + i (P S + Q R)$.
The following result relates the Cauchy indices of $\frac{P}{Q}$ 
and $\frac{R}{S}$ to that of $\frac{P R - Q S}{P S + Q R}$.

\begin{theorem}[product formula] \label{thm:RealProductFormula}
  For all $P,Q,R,S \in \R[X]$ and $a,b \in \R$ we have 
  \begin{equation}
    \label{eq:RealProductFormula}
    \Ind_a^b\Bigl(\frac{P R - Q S}{P S + Q R}\Bigr)
    = \Ind_a^b\Bigl(\frac{P}{Q}\Bigr) + \Ind_a^b\Bigl(\frac{R}{S}\Bigr) 
    - V_a^b\Bigl( 1, \frac{P}{Q} + \frac{R}{S} \Bigr) .
  \end{equation} 
\end{theorem}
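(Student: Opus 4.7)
The plan is to reduce (4.6) to a local verification at each pole by bisection of the interval. Both sides are additive under subdivision $a < c < b$: Proposition 3.3(a) gives $\Ind_a^c + \Ind_c^b = \Ind_a^b$, and $V_a^b = V_a - V_b$ is trivially telescopic. Since multiplying $(P,Q)$ or $(R,S)$ by a common nonzero polynomial leaves every term of (4.6) unchanged, I would first reduce to $\gcd(P,Q) = \gcd(R,S) = 1$. Iterating bisection, I then reduce to the case where the interior of $[a,b]$ contains at most one point $x_0$ at which one of $Q$, $S$, $PS+QR$ vanishes, with no vanishing at the endpoints.

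On intervals containing no such event, $h_3 := (PR-QS)/(PS+QR)$ is regular, $f := P/Q$ and $g := R/S$ have no poles, and $f+g$ has constant sign, so every term of (4.6) is zero. The substance of the proof is the local verification around an isolated event $x_0$, which I would split into three cases.

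\textbf{Case (i).} Only $Q(x_0) = 0$ (symmetrically only $S(x_0) = 0$). By coprimality $P(x_0) \ne 0$, so $PS+QR = PS \ne 0$ at $x_0$ and $h_3$ is regular there. The simple pole of $f$ has index $\epsilon := \sign(P(x_0) Q'(x_0))$, and since $g$ is bounded near $x_0$, the sign of $f+g$ flips from $-\epsilon$ to $+\epsilon$ across $x_0$, giving $V_a^b(1,f+g) = \epsilon$; both sides of (4.6) then equal $0$. \textbf{Case (iii).} $(PS+QR)(x_0) = 0$ with $Q(x_0), S(x_0) \ne 0$. Here $f, g$ are regular while $h_3$ has a (generically simple) pole. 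Writing $\delta$ for the sign of $f+g$ just left of $x_0$, a short sign computation using $\sign(PR-QS)(x_0) = -\sign(QS)(x_0)$ gives $\Ind_{x_0}(h_3) = \delta$ and $V_a^b(1,f+g) = -\delta$, so (4.6) reduces to $\delta = 0 + 0 - (-\delta)$.

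\textbf{Case (ii)} is the most delicate: $Q(x_0) = S(x_0) = 0$ simultaneously. Setting $A := P(x_0)/Q'(x_0)$ and $B := R(x_0)/S'(x_0)$, one expands $PS+QR \sim (A+B) Q'(x_0) S'(x_0) (X-x_0)$ to leading order. If $A+B \ne 0$ then $f+g$ and $h_3$ both acquire simple poles, and the elementary identity $\sign A + \sign B - \sign(A+B) = \sign(AB)\sign(A+B)$, checkable against the four sign patterns of $(A,B)$, establishes (4.6). If $A+B = 0$ then $f+g$ is regular at $x_0$ and $h_3$ has a pole of order $\ge 2$, so both the index of $h_3$ and $V_a^b(1,f+g)$ vanish, while $\sign A + \sign B = 0$. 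The main obstacle is the bookkeeping in this common-pole case; higher-order coincidences (such as $PS+QR$ vanishing to order $\ge 3$, or non-simple zeros of $Q$ or $S$) can be handled either by explicit Laurent expansion or by a small-perturbation argument, since both sides of (4.6) are locally constant under coefficient deformations that preserve the sign data away from $x_0$.
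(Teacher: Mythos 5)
Your overall structure --- reduce to coprime pairs, bisect to isolate a single pole, then analyze three local cases using the identity $h_3 = (fg-1)/(f+g)$ --- matches the paper's proof of Theorem \ref{thm:RealProductFormula}, and your Cases (i) and (iii) and the $A+B\ne 0$ subcase of (ii) are correct for simple poles. There is, however, a genuine error in the $A+B=0$ subcase: you assert that $f+g$ being regular at $x_0$ forces $V_a^b(1,f+g)$ to vanish. This fails when $PS+QR$ vanishes to odd order $\ge 3$ at $x_0$ (which can happen with simple zeros of $Q,S$ and $(PS+QR)'(x_0)=0$): then $f+g$ changes sign at $x_0$, so $V_a^b(1,f+g)=\pm 1$, and $h_3$ has a pole of odd order $\ge 3$, so $\Ind_{x_0}(h_3)=\mp 1$. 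The two errors happen to cancel and the identity \eqref{eq:RealProductFormula} still holds, but your stated argument does not establish this. The proposed fallback --- a small-perturbation argument appealing to local constancy of both sides --- is also unavailable here: the theorem is proved over an arbitrary real closed field $\R$, where there is no metric continuity to invoke, and ``both sides are locally constant under deformation'' is in any case essentially what one is trying to prove.

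The paper avoids all this order-bookkeeping with a normalization your proposal misses. After bisecting, the unique pole is moved to the \emph{endpoint} $a$ (rather than being isolated in the interior), and the sign flip $(P,Q,R,S)\mapsto(P,-Q,R,-S)$ arranges $\frac{P}{Q}+\frac{R}{S}>0$ on $]a,b]$, so that $V_a^b=+\frac{1}{2}$ automatically. In the common-pole case the sign of $h_3=(fg-1)/(f+g)$ near $a^+$ is then simply $\sign(fg)$, since the denominator is positive and $fg$ dominates $-1$; writing $\lim_a^+ f = s_f\cdot\infty$ and $\lim_a^+ g = s_g\cdot\infty$ with not both $s_f,s_g$ negative, formula \eqref{eq:RealProductFormula} reduces to $\frac{1}{2} s_f s_g=\frac{1}{2} s_f+\frac{1}{2} s_g-\frac{1}{2}$, which is verified in three lines and is entirely independent of pole orders and zero multiplicities. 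Adopting this normalization both repairs the gap and considerably shortens the argument.
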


We remark that in the last term we have $\frac{P}{Q} + \frac{R}{S} = \frac{ P S + Q R }{ Q S } = \frac{\im(F G)}{\im(F)\im(G)}$, whence
\begin{equation} \label{eq:CorrectionTerms}
  \textstyle 
  V_a^b\bigl( 1, \frac{P}{Q} + \frac{R}{S} \bigr)
  = \half \bigl[ \sign\bigl( \frac{ P S + Q R }{ Q S } \mid X \mapsto b \bigr) 
  - \sign\bigl( \frac{ P S + Q R }{ Q S } \mid X \mapsto a \bigr) \bigr] .
\end{equation}
If $a$ or $b$ is a pole, this is evaluated using the convention $\sign(\infty) = 0$.

For $(P=0,Q=1)$ or $(R=0,S=1)$ or $(P=S,Q=R)$, 
the product formula \eqref{eq:RealProductFormula} reduces to 
the inversion formula \eqref{eq:InversionGeneral}.
The proof of the general case follows the same lines.

\begin{proof}
  Equation \eqref{eq:RealProductFormula} trivially holds 
  in the degenerate cases where $Q=0$, $S=0$, or $P S + Q R = 0$; 
  we can thus concentrate on the generic case where $Q, S, P S + Q R \in \R[X]^*$.
  We can further assume $\gcd(P,Q) = \gcd(R,S) = 1$.
  Since \eqref{eq:RealProductFormula} is additive 
  with respect to subdivision of the interval $[a,b]$,
  we can assume that $[a,b]$ contains at most one pole.

  \textit{Global analysis away from poles:}
  Suppose that $[a,b]$ does not contain zeros of $Q$, $S$, or $P S + Q R$.
  Then all three indices in \eqref{eq:RealProductFormula} vanish in the absence of poles,
  and the intermediate value property ensures that 
  $Q$, $S$, and $P S + Q R$ are of constant sign on $[a,b]$,
  whence $V_a^b\bigl(1,\frac{P S + Q R}{Q S}\bigr) = 0$
  and Equation \eqref{eq:RealProductFormula} holds.
  
  \textit{Local analysis at a pole:}
  Suppose that $[a,b]$ contains a pole.
  Subdividing, if necessary, we can assume that this pole is either $a$ or $b$.  
  Applying the symmetry $X \mapsto a+b-X$, if necessary, we can assume that the pole is $a$.
  We thus have $V_a^b = \half \sign( \frac{P}{Q} + \frac{R}{S} \mid X \mapsto b )$
  and $Q$, $S$, $P S + Q R$ are of constant sign on $\ei{a}{b}$.
  Applying the symmetry $(P,Q,R,S) \mapsto (P,-Q,R,-S)$, if necessary, 
  we can assume that $\frac{P}{Q} + \frac{R}{S} > 0$ on $\ei{a}{b}$, whence $V_a^b = +\half$.
  Based on these preparations we distinguish three cases.

  \textit{First case.}
  Suppose first that either $Q(a) = 0$ or $S(a) = 0$. 
  Applying the symmetry $(P,Q,R,S) \mapsto (R,S,P,Q)$, 
  if necessary, we can assume that $Q(a) = 0$ and $S(a) \ne 0$.
  Then $P S + Q R$ does not vanish at $a$, whence 
  $\Ind_a^b\bigl(\frac{P R - Q S}{P S + Q R}\bigr) 
  = \Ind_a^b\bigl(\frac{R}{S}\bigr) = 0$.  
  Since $\frac{P}{Q} + \frac{R}{S} > 0$ on $\ei{a}{b}$
  we have $\lim_a^+ \frac{P}{Q} = +\infty$,
  whence $\Ind_a^b\bigl(\frac{P}{Q}\bigr) = +\half$
  and Equation \eqref{eq:RealProductFormula} holds.
  
  \textit{Second case.}
  Suppose that $P S + Q R$ vanishes at $a$, but $Q(a) \ne 0$ and $S(a) \ne 0$.
  Then $\Ind_a^b\bigl(\frac{P}{Q}\bigr) = \Ind_a^b\bigl(\frac{R}{S}\bigr) = 0$,
  and we only have to study the pole of 
  \begin{equation} \label{eq:ProductQuotient}
    \frac{P R - Q S}{P S + Q R} =
    \frac{ \frac{P}{Q} \cdot \frac{R}{S} - 1 }{ \frac{P}{Q} + \frac{R}{S} } .
  \end{equation}

  At $a$ the denominator vanishes and the numerator is negative:
  \[ \textstyle 
  \frac{P(a)}{Q(a)} + \frac{R(a)}{S(a)} = 0, \quad\text{whence}\quad
  \frac{P(a)}{Q(a)} \cdot \frac{R(a)}{S(a)} - 1 = - \frac{P^2(a)}{Q^2(a)} - 1 < 0 .
  \]
  This implies $\lim_a^+ \frac{P R - Q S}{P S + Q R} = -\infty$,
  whence $\Ind_a^b\bigl(\frac{P R - Q S}{P S + Q R}\bigr) = -\half$
  and Equation \eqref{eq:RealProductFormula} holds.

  \textit{Third case.}
  Suppose that $a$ is a common pole of $\frac{P}{Q}$ and 
  $\frac{R}{S}$, whence also of $\frac{P R - Q S}{P S + Q R}$.
  Since $\frac{P}{Q} + \frac{R}{S} > 0$ on $\ei{a}{b}$, we have 
  $\lim_a^+ \frac{P}{Q} = +\infty$ or $\lim_a^+ \frac{R}{S} = +\infty$.
  Equation \eqref{eq:ProductQuotient} implies that 
  $\lim_a^+ \bigl(\frac{P R - Q S}{P S + Q R}\bigr)
  = \lim_a^+ \bigl(\frac{P}{Q}\bigr) \cdot \lim_a^+ \bigl(\frac{R}{S}\bigr)$,
  whence Equation \eqref{eq:RealProductFormula} holds.
\end{proof}

The product formula \eqref{eq:RealProductFormula} entails 
the multiplicativity \eqref{property:Multiplicativity} 
stated in Theorem \ref{thm:AlgebraicWindingNumber}.

\begin{corollary}[multiplicativity of winding numbers] \label{cor:Multiplicativity}
  We have $\Wind( \gamma_1 \cdot \gamma_2 ) = \Wind( \gamma_1) + \Wind( \gamma_2 )$ 
  for all piecewise polynomial loops $\gamma_1,\gamma_2 \colon [0,1] \to \C$ whose vertices are not mapped to $0$.
\end{corollary}

\begin{proof}
  On a common subdivision $0 = t_0 < t_1 < \dots < t_n = 1$, both $\gamma_1,\gamma_2$ 
  are polynomials on each interval.  There exist $F_k = P_k + i Q_k$ 
  and $G_k = R_k + i S_k$ with $P_k,Q_k,R_k,S_k \in \R[X]$ such that 
  $\gamma_1(t) = F_k(t)$ and $\gamma_2(t) = G_k(t)$ for all $t \in [t_{k-1},t_k]$.
  By excluding zeros of $\gamma_1,\gamma_2$ on the vertices $t_0,t_1,\dots,t_n$, 
  we ensure that $\frac{P_k}{Q_k}(t_k) = \frac{P_{k+1}}{Q_{k+1}}(t_k)$
  and $\frac{R_k}{S_k}(t_k) = \frac{R_{k+1}}{S_{k+1}}(t_k)$ for all $k=1,\dots,n-1$. 
  Since both paths $\gamma_1,\gamma_2$ are closed, this also holds for $k=n$
  with the understanding that $F_{n+1} = F_1$ and $G_{n+1} = G_1$.
  The desired result $\Wind( \gamma_1 \cdot \gamma_2 ) = \Wind( \gamma_1) + \Wind( \gamma_2 )$ 
  now follows from the product formula \eqref{eq:RealProductFormula},
  because at each vertex $t_k$ the incoming and the outgoing boundary term 
  from \eqref{eq:CorrectionTerms} cancel each other.
\end{proof}

\begin{corollary} \label{cor:ComplexProductFormula}
  Let $\gamma \colon [0,1] \to \R^2$ be a piecewise polynomial loop.
  If $F,G \in \C[X,Y]$ do not vanish at any of the vertices of $\gamma$, 
  then $\cind{\gamma}{ F \cdot G } = \cind{\gamma}{F} + \cind{\gamma}{G}$.
  \qed
\end{corollary}

More specifically, if $F,G$ do not vanish at any of the vertices of the rectangle $\Gamma \subset \R^2$,
then $\cind{\partial\Gamma}{ F \cdot G } = \cind{\partial\Gamma}{F} + \cind{\partial\Gamma}{G}$.

\begin{remark}
  The corollary allows zeros of $F$ or $G$ on $\gamma$ but excludes zeros on the vertices. 
  This is not an artefact of our proof, but inherent to the algebraic winding number.
  As an illustration consider $\Gamma = [0,1] \times [0,1]$ and $H_s = Z \cdot (Z - 2 - i s)$.
  The root $z_1=0$ lies on a vertex of $\Gamma$ while the other root $z_2=2+is$ is outside of $\Gamma$.
  In particular, we have $\cind{\partial\Gamma}{Z} = \nicefrac{1}{4}$ and  $\cind{\partial\Gamma}{Z - 2 - i s} = 0$.
  A little calculation shows that $\cind{\partial\Gamma}{H_{1}} = 0$ and
  $\cind{\partial\Gamma}{H_{0}} = \nicefrac{1}{4}$ and $\cind{\partial\Gamma}{H_{-1}} = \nicefrac{1}{2}$,
  whence $\cind{\partial\Gamma}{H_s}$ is not multiplicative.
\end{remark}

\begin{Annotation}
  \textbf{(Roots on vertices)} 
  Roots on vertices are special because our arbitrary reference to the real axis 
  breaks the rotational symmetry, as illustrated in Annotation \ref{anno:AverageWindingNumber}.
  The average winding number $\mind(\gamma)$ of a piecewise polynomial path
  $\gamma \colon [0,1] \to \C$ repairs this defect by restoring rotational symmetry, 
  such that $\mind(\gamma_1 \gamma_2) = \mind(\gamma_1) + \mind(\gamma_2)$ 
  even if zeros happen to lie on vertices.  For every polynomial $F \in \C[Z]^*$ 
  and every polygonal domain $\Gamma \subset \C$, the average winding number 
  $\mind(F|\partial\Gamma)$ thus counts the number of roots of $F$ in $\Gamma$.
  Here each root is counted with $\alpha$ times its multiplicity,
  where $\alpha \in [0,1]$ measures the angle of $\Gamma$ at this root.
  For example, $\alpha \in \{1,\nicefrac{1}{2},\nicefrac{1}{4},0\}$ if $\Gamma$ is a rectangle
  and the zero lies, respectively, in $\Int\Gamma$, an edge, a vertex, or outside of $\Gamma$.
\end{Annotation}

\begin{corollary} \label{cor:CountingComplexRoots}
  Consider a split polynomial $F = (Z-z_1) \cdots (Z-z_n)$ in $\C[Z]$.
  If $F$ does not vanish at any vertex of $\gamma$,
  then $\Wind( F \circ \gamma ) = \sum_{k=1}^n \Wind( \gamma - z_k )$.
  \qed
\end{corollary}

More specifically, if $F$ does not vanish at any vertex of the rectangle $\Gamma \subset \C$,
then $\cind{\partial\Gamma}{F}$ counts the number of zeros in $\Gamma$. 
Each zero in the interior of $\Gamma$ is counted with its multiplicity, whereas 
each zero in an edge of $\partial\Gamma$ is counted with half its multiplicity.

\begin{Annotation}
  \textbf{(Hypotheses)}
  In this section we have barely used the hypothesis that $\R$ be real closed.
  The intermediate value property is essential only for 
  the product formula \eqref{eq:RealProductFormula}, 
  where we use it for the denominators $Q,S,P S + Q R \in \R[X]^*$.
  One might suspect that if the polynomials $F,G \in \C[X]$ split, then the product 
  formula can be applied and Corollaries \ref{cor:ComplexProductFormula} and 
  \ref{cor:CountingComplexRoots} hold independently of any further hypothesis on $\R$.
  This independence could be forced if we changed our definition of the winding number in \sref{sub:AlgebraicWindingNumber}
  from the Cauchy index $\cind{[t_0,t_1]}{P} := \frac{1}{2} \Ind_{t_0}^{t_1}\bigl( \frac{\re P}{\im P} \bigr)$
  to the Sturm index $\cind{[t_0,t_1]}{P} := \frac{1}{2} \Sturm_{t_0}^{t_1}\bigl( \frac{\re P}{\im P} \bigr)$.
  Both coincide over a real closed field, but the latter depends only 
  on the given coefficients and is independent of the ambient field $\R$.
\end{Annotation}

\begin{remark}
  If we assume that $\C$ is algebraically closed, then \emph{every} polynomial 
  $F \in \C[Z]^*$ splits into linear factors as required in Corollary \ref{cor:CountingComplexRoots}.
  So if you prefer some other existence proof for the roots,
  then you may skip the next section and still benefit 
  from root location (Theorem \ref{thm:RootLocation}).
  This seems to be the point of view adopted by Cauchy \cite{Cauchy:1831,Cauchy:1837}
  in 1831/37, which may explain why he did not attempt to use his index 
  for a constructive proof of the Fundamental Theorem of Algebra.
  (In 1820 he had already given a non-constructive proof; see \sref{ssub:Analysis}.)
  In 1836 Sturm and Liouville \cite{SturmLiouville:1836,Sturm:1836} proposed 
  to extend Cauchy's approach so as to obtain an algebraic existence proof.
  This is our aim in the next section. 
\end{remark}


\section{The Fundamental Theorem of Algebra} \label{sec:FTA}

In the preceding section we have constructed the algebraic winding number 
and derived its multiplicativity. We will now show its homotopy invariance 
and thus complete the real-algebraic proof of the Fundamental Theorem of Algebra.
The geometric idea goes back to Gauss' doctoral dissertation (see \sref{sub:Gauss}),
but the algebraic proof seems to be new.  


\subsection{Counting complex roots} \label{sub:CountingComplexRoots}

The following algebraic method for counting complex roots is the counterpart 
of Sturm's theorem for counting real roots (\sref{sub:CountingRealRoots}).

\begin{theorem}[root counting] \label{thm:CountingComplexRoots}
  Consider a polynomial $F \in \C[Z]^*$ and a rectangle $\Gamma \subset \C$
  such that $F$ does not vanish at any of the vertices of $\Gamma$.
  Then the algebraic winding number $\cind{\partial\Gamma}{F}$ 
  counts the number of zeros of $F$ in $\Gamma$:
  each zero in the interior of $\Gamma$ is counted with its multiplicity, whereas
  each zero in an edge of $\partial\Gamma$ is counted with half its multiplicity.
\end{theorem}

\begin{proof}
  We factor $F = (Z-z_1) \cdots (Z-z_m) G$ with $z_1,\dots,z_m \in \Gamma$ 
  such that $G  \in \C[Z]^*$ has no zeros in $\Gamma$.  
  Then $\cind{\partial\Gamma}{G} = 0$ according to Lemma \ref{lem:GlobalIndexZero} below.
  The assertion now follows from normalization (Proposition \ref{prop:Normalization})
  and the product formula (Corollary \ref{cor:ComplexProductFormula}).
\end{proof}

\begin{Annotation}
  \textbf{(Hypotheses)}
  As in Sturm's theorem, Corollary \ref{cor:CountingRealRoots}, 
  the intermediate value property of $\R$ is essential. 
  As a counterexample consider $\R = \QQ$ and $\C = \QQ[i]$.
  The winding number of $F = Z^2 - i$ in $\C[Z]$ with respect to 
  $\Gamma = [0,1] \times [0,1] \subset \C$ is $\cind{\partial\Gamma}{F} = 1$.
  This corresponds to the zero $\frac{1}{2}\sqrt{2} + \frac{i}{2}\sqrt{2}$.
  Here the winding number does not count zeros in $\QQ[i]$ but in $\QQ^c[i]$. 
\end{Annotation}

The crucial point is to show that $\cind{\partial\Gamma}{F} = 0$
whenever $F$ has no zeros in $\Gamma$, or by contraposition, that
$\cind{\partial\Gamma}{F} \ne 0$ implies that $F$ vanishes at some point in $\Gamma$.


\begin{lemma}[local version] \label{lem:LocalIndexZero}
  If $F \in \C[X,Y]$ satisfies $F(x,y) \ne 0$ for some point $(x,y) \in \R^2$,
  then there exists $\delta>0$   such that $\cind{\partial\Gamma}{F} = 0$ for every 
  $\Gamma \subset [x-\delta,x+\delta] \times [y-\delta,y+\delta]$.
\end{lemma}

\begin{Annotation}
  A proof could be improvised as follows.
  Suppose first that $\im F(x,y) > 0$.  By continuity 
  there exists $\delta > 0$ such that $\im F > 0$ on the rectangle 
  $U = [x-\delta,x+\delta] \times [y-\delta,y+\delta]$.
  For every $\Gamma \subset U$ we then have $\cind{\partial\Gamma}{F} = 0$.
  The case $\im F(x,y) < 0$ is analogous.
  If $\im F(x,y) = 0$, then our hypothesis ensures that $\re F(x,y) \ne 0$. 
  Again there exists $\delta > 0$ such that $\re F \ne 0$ on the rectangle 
  $U = [x-\delta,x+\delta] \times [y-\delta,y+\delta]$.
  Now Corollary \ref{cor:ComplexProductFormula} shows that 
  $\cind{\partial\Gamma}{F} = \cind{\partial\Gamma}{iF} = 0$ as in the first case.
  The following detailed proof makes the choice 
  of $\delta$ explicit and thus avoids case distinctions.
\end{Annotation}

\begin{proof}
  Let us make the standard continuity argument explicit.
  For all $s,t \in \R$ we have $F(x+s,y+t) = a + \sum_{j+k \ge 1} a_{jk} s^j t^k$ 
  with $a = F(x,y) \ne 0$ and certain coefficients $a_{jk} \in \C$.
  We set $M := \max \sqrt[j+k]{\abs{a_{jk}/a}}$, so that 
  $\abs{a_{jk}} \le \abs{a} \cdot M^{j+k}$. 
  For $\delta := \frac{1}{4M}$ and $\abs{s},\abs{t} \le \delta$ we find
  \begin{equation} \label{eq:LocalEstimate}
    \Bigl| \sum_{j+k \ge 1} a_{jk} s^j t^k \Bigr|
    \le \sum_{n \ge 1} \sum_{j+k = n} \abs{a} \cdot M^{j+k} \cdot \abs{s}^j \cdot \abs{t}^k
    \le \abs{a} \sum_{n \ge 1} (n+1) \textstyle \bigl(\frac{1}{4}\bigr)^n 
    = \frac{7}{9} \abs{a} . 
  \end{equation}
  This shows that $F$ does not vanish in
  $U := [x-\delta,x+\delta] \times [y-\delta,y+\delta]$.
  Corollary \ref{cor:ComplexProductFormula} ensures that 
  $\cind{\partial\Gamma}{F} = \cind{\partial\Gamma}{cF}$ for 
  every rectangle $\Gamma \subset U$ and every constant $c \in \C^*$.
  Choosing $c=i/a$ we can assume that $F(x,y) = i$.  
  The estimate \eqref{eq:LocalEstimate} then shows that $\im F > 0$ on $U$, 
  whence $\cind{\partial\Gamma}{F} = 0$ for every rectangle $\Gamma \subset U$.
\end{proof}

While the preceding local lemma uses only continuity 
of polynomials and thus holds over every ordered field,
the following global version requires the field $\R$ to be real closed.

\begin{lemma}[global version] \label{lem:GlobalIndexZero}
  Let $\Gamma = [x_0,x_1] \times [y_0,y_1]$ be a rectangle in $\R^2$.
  If the polynomial $F \in \C[X,Y]$ satisfies $F(x,y) \ne 0$ for all 
  $(x,y) \in \Gamma$, then $\cind{\partial\Gamma}{F} = 0$.
\end{lemma}

We remark that over the real numbers $\RR$, a short proof can be given as follows:

\begin{proof}[Proof of Lemma \ref{lem:GlobalIndexZero} for the case $\R = \RR$, using compactness.]
  The rectangle $\Gamma$ is covered by open sets $U(x,y) = \ee{x-\delta}{x+\delta} \times \ee{y-\delta}{y+\delta}$
  as in Lemma \ref{lem:LocalIndexZero}, where $(x,y)$ ranges over $\Gamma$ and $\delta > 0$ depends on $(x,y)$.
  Compactness of $\Gamma$ ensures that there exists $\lambda>0$, 
  called a Lebesgue number of the cover, such that 
  every rectangle $\Gamma' \subset \Gamma$ of diameter $< \lambda$ 
  is contained in $U(x,y)$ for some $(x,y) \in \Gamma$.

  For all subdivisions $x_0 = s_0 < s_1 < \dots < s_m = x_1$ 
  and $y_0 = t_0 < t_1 < \dots < t_n = y_1$, Lemma \ref{lem:Subdivision} ensures that 
  $\cind{\partial\Gamma}{F} = \sum_{j=1}^m \sum_{k=1}^n \cind{\partial\Gamma_{jk}}{F}$
  where $\Gamma_{jk} = [s_{j-1},s_j] \times [t_{k-1},t_k]$.
  For $s_j = x_0 + j \frac{x_1-x_0}{m}$ and $t_k = y_0 + k \frac{y_1-y_0}{n}$
  with $m,n$ sufficiently large, each $\Gamma_{jk}$ has diameter $< \lambda$, so
  Lemma \ref{lem:LocalIndexZero} implies that $\cind{\partial\Gamma_{jk}}{F}=0$ 
  for all $j,k$, whence $\cind{\partial\Gamma}{F}=0$.
\end{proof}

The preceding compactness argument applies only to $\CC = \RR[i]$ 
over the field $\RR$ of real numbers (\sref{sub:RealNumbers})
and not to an arbitrary real closed field (\sref{sub:RealClosedFields}).
In particular, it is no longer elementary in the sense that 
it uses a second-order property (\sref{sub:ElementaryTheory}).
We therefore provide an elementary real-algebraic proof using Sturm chains:

\begin{proof}[Algebraic proof of Lemma \ref{lem:GlobalIndexZero}, using Sturm chains]
  Each $F \in \C[X,Y]$ can be written as $F = \sum_{k=0}^m f_k X^k$ 
  with $f_k \in \C[Y]$.  In this way we consider $\R[X,Y] = \R[Y][X]$ 
  as a polynomial ring in one variable $X$ over $\R[Y]$.
  We can reduce  $\frac{\re F}{\im F} = \frac{S_1}{S_0}$ such that 
  $S_0,S_1 \in \R[X,Y]$ satisfy $\gcd(S_0,S_1) = 1$ in $\R(Y)[X]$.
  Pseudo-euclidean division in $\R[Y][X]$, as explained in \sref{sub:PseudoEuclideanDivision},
  produces a chain $(S_0,\dots,S_n)$ with $S_{k+1} = Q_k S_k - c_k^2 S_{k-1}$ 
  for some $Q_k \in \R[Y][X]$ and $c_k \in \R[Y]^*$ such that $\deg_X S_{k+1} < \deg_X S_{k}$.
  After $n$ iterations we end up with $S_{n+1} = 0$ and $S_n \in \R[Y]^*$.
  (If $\deg_X S_n > 0$, then $\gcd(S_0,S_1)$ in $\R(Y)[X]$ would be of positive degree.)

  \textit{Regular case.}
  Assume first that $S_n \in \R[Y]^*$ does not vanish at any point $y \in [y_0,y_1]$.
  Proposition \ref{prop:AlgebraicSturmCriterion} ensures that 
  for each $y \in [y_0,y_1]$ specializing $(S_0,\dots,S_n)$ 
  in $Y \mapsto y$ yields a Sturm chain in $\R[X]$.
  Likewise, for each $x \in [x_0,x_1]$, specializing $(S_0,\dots,S_n)$ in 
  $X \mapsto x$ yields a Sturm chain in $\R[Y]$ with respect to the interval $[y_0,y_1]$.
  In the sum over all four edges of $\Gamma$, 
  all contributions cancel each other in pairs:
  \begin{align*}
    2\cind{\partial\Gamma}{F} = & \textstyle
    + \Ind_{x_0}^{x_1}\bigl( \frac{\re F}{\im F} \bigm| Y \mapsto y_0 \bigr) 
    + \Ind_{y_0}^{y_1}\bigl( \frac{\re F}{\im F} \bigm| X \mapsto x_1 \bigr) 
    \\ & \textstyle
    + \Ind_{x_1}^{x_0}\bigl( \frac{\re F}{\im F} \bigm| Y \mapsto y_1 \bigr)
    + \Ind_{y_1}^{y_0}\bigl( \frac{\re F}{\im F} \bigm| X \mapsto x_0 \bigr) 
    \\ = & 
    + V_{x_0}^{x_1}\bigl( S_0,\dots,S_n \bigm| Y \mapsto y_0 \bigr) 
    + V_{y_0}^{y_1}\bigl( S_0,\dots,S_n \bigm| X \mapsto x_1 \bigr) 
    \\ & 
    + V_{x_1}^{x_0}\bigl( S_0,\dots,S_n \bigm| Y \mapsto y_1 \bigr) 
    + V_{y_1}^{y_0}\bigl( S_0,\dots,S_n \bigm| X \mapsto x_0 \bigr) 
    = 0 .
  \end{align*}

  \textit{Singular case.}  
  In general we have to cope with a finite set $\Y \subset [y_0,y_1]$ of zeros of $S_n$.  
  We can change the roles of $X$ and $Y$ and apply pseudo-euclidean division
  in $\R[X][Y]$; this leads to a finite set of zeros $\X \subset [x_0,x_1]$.
  We obtain a finite set $\Z = \X \times \Y$ of singular points in $\Gamma$,
  where both chains fail.  (These points are potential zeros of $F$.)

  \begin{figure}[ht]
    \centering
    \includegraphics[height=20ex]{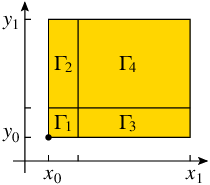}
    \caption{Isolating a singular point $(x_0,y_0)$ 
      within $\Gamma = [x_0,x_1] \times [y_0,y_1]$}
  \end{figure}

  By subdivision and symmetry we can assume that $(x_0,y_0)$ is the only 
  singular point in our rectangle $\Gamma = [x_0,x_1] \times [y_0,y_1]$.
  By hypothesis, $F$ does not vanish in $(x_0,y_0)$, so we can apply 
  Lemma \ref{lem:LocalIndexZero} to $\Gamma_1 = [x_0,x_0+\delta] \times [y_0,y_0+\delta]$ 
  with $\delta>0$ sufficiently small such that $\cind{\partial\Gamma_1}{F} = 0$.
  The remaining three rectangles $\Gamma_2 = [x_0,x_0+\delta] \times [y_0+\delta,y_1]$,
  $\Gamma_3 = [x_0+\delta,x_1] \times [y_0,y_0+\delta]$, and $\Gamma_4 = [x_0+\delta,x_1] \times [y_0+\delta,y_1]$
  do not contain any singular points, so that $\cind{\partial\Gamma_j}{F} = 0$ by appealing to the regular case.  

  Summing over all sub-rectangles we conclude that $\cind{\partial\Gamma}{F} = 0$.
\end{proof}

\begin{Annotation}
  \textbf{(Resultant)}
  The construction of the chain $(S_0,S_1,\dots,S_n)$ in $\R[Y][X]$ carried out in the proof 
  decreases the degree in $X$ but usually increases the degree in $Y$.  The final term $S_n$ 
  is a crude form of the resultant of $S_0$ and $S_1$.  We are rather careless about degrees here, 
  and the usual approach via (sub)resultants would give much better control (\sref{sub:SubresultantAlgorithm}).
  The crucial point in the proof, however, is that 
  we can specialize $(S_0,S_1,\dots,S_n)$ in either $X$ or $Y$
  and obtain a Sturm chain in the remaining variable, 
  by appealing to Proposition \ref{prop:AlgebraicSturmCriterion}.
  For subresultants a similar double specialization argument 
  is less obvious and deserves further study.
\end{Annotation}

\begin{Annotation}
  \textbf{(Counting zeros and poles of rational functions)}
  We have focused on polynomials $F \in \C[Z]$, but 
  Definition \ref{def:AlgebraicWindingNumber} of the algebraic winding number 
  and the product formula of Corollary \ref{cor:ComplexProductFormula}
  immediately extend to rational functions $F \in \C(Z)$.
  It is then an easy matter to establish the following generalization:
  
  \begin{Theorem}
    Consider a rational function $F \in \C(Z)$ and a domain $\Gamma \subset \C$ 
    with piecewise rational boundary $\partial\Gamma$.  If $F$ has neither zeros nor poles
    at the vertices of $\partial\Gamma$, then $\cind{\partial\Gamma}{F}$ counts, 
    with multiplicity, the number of zeros minus the number of poles of $F$ in $\Gamma$.
    Boundary points count for one half.
    \qed
  \end{Theorem}
  
\end{Annotation}

\subsection{Homotopy invariance} \label{sub:HomotopyInvariance}

We consider piecewise  polynomial loops $\gamma_0,\gamma_1 \colon [0,1] \to \C^*$.
A \emph{homotopy} between $\gamma_0$ and $\gamma_1$ is a map $F \colon [0,1] \times [0,1] \to \C^*$ 
with $F(0,t) = \gamma_0(t)$ and $F(1,t) = \gamma_1(t)$ as well as $F(s,0) = F(s,1)$ for all $s,t \in [0,1]$.
We also require that $F$ be \emph{piecewise polynomial}, which means that for 
some subdivision $0 = s_0 < s_1 < \dots < s_m = 1$ and $0 = t_0 < t_1 < \dots < t_n = 1$, 
the map $F$ is polynomial on each $\Gamma_{jk} = [s_{j-1},s_j] \times [t_{k-1},t_k]$.
We can now prove the homotopy invariance \eqref{property:HomotopyInvariance}
stated in Theorem \ref{thm:AlgebraicWindingNumber}.

\begin{theorem} \label{thm:HomotopyInvariance}
  We have $\Wind( \gamma_0 ) = \Wind( \gamma_1 )$ whenever 
  the loops $\gamma_0,\gamma_1$ are homotopic in $\C^*$.
\end{theorem}

\begin{proof}
  On $\Gamma = [0,1] \times [0,1]$ we have $\cind{\partial\Gamma}{F} = \Wind(\gamma_0 ) - \Wind(\gamma_1)$.
  This follows from our hypothesis that $F(s,0) = F(s,1)$ for all $s\in [0,1]$,
  so these two opposite edges cancel each other.
  Subdivision as above yields $\cind{\partial\Gamma}{F} = \sum_{jk} \cind{\partial\Gamma_{jk}}{F}$
  according to Lemma \ref{lem:Subdivision}.
  Since $F$ has no zero, Lemma \ref{lem:GlobalIndexZero} 
  ensures that $\cind{\partial\Gamma_{jk}}{F} = 0$ for all $j,k$.
\end{proof}

As a consequence, the winding number $\cind{\partial\Gamma}{F_t}$
does not change if we deform $F_0$ to $F_1$ avoiding zeros on $\partial\Gamma$.
To make this precise, we consider $F \in \C[Z,T]$; for each $t \in [0,1]$ 
we denote by $F_t$ the polynomial in $\C[Z]$ obtained by specializing $T \mapsto t$.

\begin{corollary} \label{cor:HomotopyInvariance}
  Suppose that $F \in \C[Z,T]$ is such that for each $t \in [0,1]$ 
  the polynomial $F_t \in \C[Z]$ has no zeros on $\partial\Gamma$.  
  Then $\cind{\partial\Gamma}{F_0} = \cind{\partial\Gamma}{F_1}$.
  \qed
\end{corollary}

%
%

\begin{remark} \label{rem:HomotopyEquivalence}
  We have deduced homotopy invariance from the crucial Lemma \ref{lem:GlobalIndexZero} 
  saying that $\cind{\partial\Gamma}{F} = 0$ whenever $F$ has no zeros in $\Gamma$.
  Both statements are in fact equivalent. After translation we can assume $(0,0) \in \Gamma$.
  The homotopy $F_t(X,Y) = F(tX,tY)$ deforms $F_1 = F$ to the constant $F_0 = F(0,0)$.
  If $F$ has no zeros in $\Gamma$, then $F_t$ has no zeros on the boundary $\partial\Gamma$,
  and homotopy invariance implies $\cind{\partial\Gamma}{F_1} = \cind{\partial\Gamma}{F_0} = 0$.
\end{remark}

Homotopy invariance implies that small perturbations do not change the winding
number and hence not the number of zeros.  Rouch\'e's theorem makes this explicit.

\begin{corollary}[Rouch\'e's theorem] \label{cor:Rouche}
  Let $F,G \in \C[Z]$ be two complex polynomials such that 
  $\abs{F(z)} > \abs{G(z)}$ for all $z \in \partial\Gamma$.
  Then $F$ and $F+G$ have the same number of zeros in $\Gamma$.
\end{corollary}

\begin{proof}
  For $F_t = F + t G$ we find $\abs{F_t} \ge \abs{F} - t\abs{G} > 0$ on $\partial\Gamma$ for all $t \in [0,1]$.
  By homotopy invariance (Corollary \ref{cor:HomotopyInvariance}) $F_0 = F$ and $F_1 = F+G$ have the same winding number 
  along $\partial\Gamma$, whence the same number of zeros in $\Gamma$ (Theorem \ref{thm:CountingComplexRoots}).
\end{proof}

\subsection{The global winding number} \label{sub:GlobalWindingNumber}

We can now prove Theorem \ref{thm:GlobalWindingNumber}, stating that 
$\cind{\partial\Gamma}{F} = \deg F$ for every polynomial $F \in \C[Z]^*$ 
and every sufficiently big rectangle $\Gamma$.  

\begin{proposition} \label{prop:CauchyBound}
  Given $F = Z^n + c_1 Z^{n-1} + \dots + c_n$ in $\C[Z]$ we define 
  its \emph{Cauchy radius} to be $\rho_F := 1 + \max\{\abs{c_1},\dots,\abs{c_n}\}$.
  This implies that $\abs{F(z)} \ge 1$ for every $z \in \C$ with $\abs{z} \ge \rho_F$.
  Hence all zeros of $F$ in $\C$ lie in the \emph{Cauchy disk} 
  $B(\rho_F) = \{\, z \in \C \mid \abs{z} < \rho_F \,\}$.
\end{proposition}

\begin{proof}
  The assertion is true for $\rho_F = 1$, since then $F = Z^n$.
  We can thus assume $\rho_F > 1$. 
  For all $z \in \C$ satisfying $\abs{z} \ge \rho_F$ we find 
  \begin{multline*}
    \abs{ F(z) - z^n } = \abs{ c_1 z^{n-1} + \dots + c_{n-1} z + c_n }
    \le \abs{c_1} \abs{z^{n-1}}  + \dots + \abs{c_{n-1}} \abs{z} + \abs{c_n} 
    \\ \le \max\left\{\abs{c_1},\dots,\abs{c_{n-1}},\abs{c_n}\right\} \left( \abs{z}^{n-1} + \dots + \abs{z} + 1 \right)
    = (\rho_F - 1) \frac{ \abs{z}^n - 1 }{ \abs{z} - 1 } 
    \le \abs{z}^n - 1 .
  \end{multline*}
  We conclude that $\abs{F(z)} \ge \abs{z^n} - \abs{F(z) - z^n} \ge 1$.
\end{proof}

This proposition holds over any ordered field $\R$ and its complex extension $\C = \R[i]$
because it uses only the general properties $\abs{ a + b } \le \abs{a} + \abs{b}$ 
and $\abs{ a \cdot b } \le \abs{a} \cdot \abs{b}$.
It is not an existence result, but only an \emph{a priori} bound:
if $F$ has zeros in $\C$, then they necessarily lie in $B(\rho_F)$.  
Over a real closed field $\R$, the algebraic winding number 
counts the number of zeros, and we arrive at the following conclusion.

\begin{theorem} \label{thm:BigRectangle}
  For every polynomial $F \in \C[Z]^*$ and every rectangle $\Gamma \subset \C$ 
  containing the Cauchy disk $B(\rho_F)$, we have $\cind{\partial\Gamma}{F} = \deg F$.
\end{theorem}

\begin{proof}
  The assertion is clear for $F \in \C^*$ of degree $0$.
  Consider $F = Z^n + c_1 Z^{n-1} + \dots + c_n$ with $n \ge 1$
  and set $M = \max\{\abs{c_1},\dots,\abs{c_n}\}$.
  The homotopy $F_t = Z^n + t(c_1 Z^{n-1} + \dots + c_n)$ deforms $F_1 = F$ to $F_0 = Z^n$.
  The Cauchy radius of $F_t$ is $\rho_t = 1 + t M$, which shrinks from $\rho_1 = \rho_F$ to $\rho_0 = 1$.
  By the previous proposition, the polynomial $F_t \in \C[Z]$ has no zeros on $\partial\Gamma$.  
  We conclude that $\cind{\partial\Gamma}{F_1} = \cind{\partial\Gamma}{F_0} = n$,
  using Corollaries  \ref{cor:HomotopyInvariance} and \ref{cor:CountingComplexRoots}.
\end{proof}

This completes the proof of the Fundamental Theorem of Algebra.
On the one hand Theorem \ref{thm:BigRectangle} says that 
$\cind{\partial\Gamma}{F} = \deg F$ provided that $\Gamma \supset B(\rho_F)$, 
and on the other hand Theorem \ref{thm:CountingComplexRoots} says that 
$\cind{\partial\Gamma}{F}$ equals the number of zeros of $F$ in $\Gamma \subset \C$.

\begin{remark} \label{rem:CauchyPolynomial}
  The Cauchy radius of Proposition \ref{prop:CauchyBound} 
  is the simplest of an extensive family of root bounds, 
  see Henrici \cite[\textsection 6.4]{Henrici:1974} 
  and Rahman--Schmeisser \cite[chap.\,8]{RahmanSchmeisser:2002}.
  We mention a nice and useful improvement:
  to each polynomial $F = c_0 Z^n + c_1 Z^{n-1} + \dots + c_n$ in $\C[Z]$ we associate 
  its \emph{Cauchy polynomial} $F^\circ = \abs{c_0} X^n - \abs{c_1} X^{n-1} - \dots - \abs{c_n}$ in $\R[X]$. 
  This implies $\abs{F(z)} \ge F^\circ(\abs{z})$ for all $z \in \C$.
  We assume $c_0 \ne 0$ and $c_n \ne 0$, such that $F^\circ(0) < 0$ and $F^\circ(x) > 0$ for large $x \in \R$.
  According to Descartes' rule of signs (Theorem \ref{thm:DescartesBudanFourier}),
  the polynomial $F^\circ$ has a unique positive root $\rho$, 
  whence $F^\circ(x) > 0$ for all $x > \rho$, and $F^\circ(x) < 0$ for all $0 \le x < \rho$.
  Given some $r > 0$ with $F^\circ(r) > 0$, we have $\abs{F(z)} > 0$ for all $|z| \ge r$,
  whence all zeros of $F$ in $\C$ lie in the disk $B(r)$.
  (Again this holds over any ordered field $\R$.)
\end{remark}

\begin{Annotation}
  \textbf{(Degree bounds)}
  The Fundamental Theorem of Algebra, in the form that we have just proven, 
  states that if the field $\R$ is real closed, i.e., every polynomial 
  $P \in \R[X]$ satisfies the intermediate value property over $\R$,
  then the field $\C = \R[i]$ is algebraically closed, i.e.,
  every polynomial $F \in \C[Z]$ splits into linear factors over $\C$.
  Since we are working exclusively with polynomials, 
  it is natural to study degree bounds.

  We call an ordered field $\R$ \emph{real $d$-closed} if every polynomial 
  $P \in \R[X]$ of degree $\le d$ satisfies the intermediate value property over $\R$. 
  Likewise, we call a field $\C$ \emph{algebraically $d$-closed} if every 
  polynomial $F \in \C[Z]$ of degree $\le d$ splits into linear factors over $\C$.
  As in Theorem \ref{thm:GeometricCharacterization} it is easy to establish 
  the following implication: if $\R$ is an ordered field such that $\R[i]$ 
  is algebraically $d$-closed, then $\R$ is real $d$-closed.
  The converse seems to be open:

  \begin{Question}
    If $\R$ is real $d$-closed, does this imply that $\R[i]$ is algebraically $d$-closed?
  \end{Question}
  
  This is trivally true for $d=1$.  The answer is also affirmative for $d=2,3,4$ 
  because quadratic, cubic, and quartic equations can be solved by radicals 
  of degree $n \le d$, i.e., roots of $Z^n - a$ with $a \in \C$, and
  these roots can be constructed in $\R[i]$ if $\R$ is real $n$-closed.
  Notice that quartic equations can be reduced to auxialiary equations of degree $\le 3$,
  so if $\R$ is real $3$-closed, then $\R[i]$ is algebraically $4$-closed
  and $\R$ is in fact real $4$-closed!

  What happens in degree $5$ and higher?  An affirmative answer 
  would be surprising\dots\  but a Galois-type obstruction seems unlikely, too.
  All of the arguments presented in this article immediately extend to 
  refined versions with the desired degree bounds -- the only exception is 
  our algebraic proof of Lemma \ref{lem:GlobalIndexZero}, where we construct 
  a Sturm chain in $\R[X,Y]$ with little control on the degrees.
  It seems to be an interesting research project to investigate
  this phenomenon in full depth and to prove optimal degree bounds.
\end{Annotation}

\subsection{Geometric characterization of the winding number}

We have constructed the algebraic winding number via Cauchy indices
\eqref{property:AlgebraicComputation} and then derived its 
geometric properties: normalization \eqref{property:Normalization}, 
multiplicativity \eqref{property:Multiplicativity}, 
and homotopy invariance \eqref{property:HomotopyInvariance}.
We now complete the circle by showing that 
\eqref{property:Normalization}, \eqref{property:Multiplicativity}, 
\eqref{property:HomotopyInvariance} characterize the winding number 
and imply \eqref{property:AlgebraicComputation}.
We begin with two fundamental examples.

\begin{example}[stars] \label{exm:StarlikeHomotopy}
  Every loop $\gamma$ in $U = \C \minus \R_{\le0}$ is homotopic in $U$ 
  to the constant loop $\gamma_0 = 1$ via $\gamma_s = 1 + s(\gamma-1)$, 
  whence \eqref{property:Normalization} and \eqref{property:HomotopyInvariance} 
  imply $\Wind(\gamma) = 0$.  The same holds in $\C \minus c \R_{\le0}$ for any $c \in \C^*$.
  Using \eqref{property:Multiplicativity} we obtain 
  $\Wind(c \gamma) = \Wind(\gamma)$ for all loops $\gamma$ and $c \in \C^*$.
\end{example}

\begin{figure}[ht]
  \hfill
  \includegraphics[height=18ex]{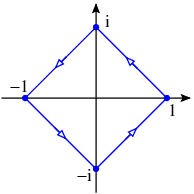} 
  \hfill
  \includegraphics[height=18ex]{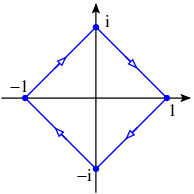} 
  \hfill
  \includegraphics[height=18ex]{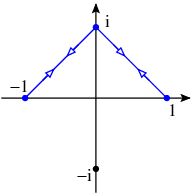} 
  \hfill
  \includegraphics[height=18ex]{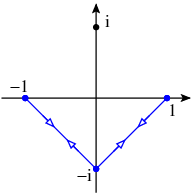} 
  \hfill{}
  \caption{The winding number $\Wind(\delta)$ of a diamond-shaped loop $\delta$.}
  \label{fig:DiamondLoop}
\end{figure}

\begin{example}[diamonds] \label{exm:DiamondLoop}
  For $0 < t_0 - \varepsilon < t_0 < t_0 + \varepsilon < 1$ 
  let $\delta \colon [0,1] \to \C$ be the loop that 
  linearly interpolates between 
  $\delta(0) = \delta(t_0-\varepsilon) = 1$, 
  $\delta(t_0-\nicefrac{\varepsilon}{2}) = \pm i$, 
  $\delta(t_0) = -1$, 
  $\delta(t_0+\nicefrac{\varepsilon}{2}) = \pm i$, 
  and $\delta(t_0+\varepsilon) = \delta(1) = 1$.
  Then $\Wind(\delta) = \frac{1}{2i}\bigl[ \delta(t_0-\nicefrac{\varepsilon}{2}) - \delta(t_0+\nicefrac{\varepsilon}{2}) \bigr]$ 
  can be deduced from \eqref{property:Normalization}, \eqref{property:Multiplicativity}, 
  \eqref{property:HomotopyInvariance} alone.  The proof is left as an exercise.
\end{example}

\begin{theorem} \label{thm:GeometricCharacterization}
  Consider an ordered field $\R$ and its complex extension $\C = \R[i]$ where $i^2=-1$.
  Let $\Omega$ be the set of piecewise polynomial loops $\gamma \colon [0,1] \to \C^*$.
  \begin{enumerate}
  \item \label{char:AlgebraicallyClosed}
    If some map $\Wind \colon \Omega \to \ZZ$ satisfies
    \eqref{property:Normalization}, \eqref{property:Multiplicativity}, 
    \eqref{property:HomotopyInvariance}, then $\C$ is algebraically closed.
  \item \label{char:RealClosed}
    If the field $\C = \R[i]$ is algebraically closed, then the ordered field $\R$ is real closed.
  \item \label{char:Unicity}
    If two maps $\Wind,\tilde\Wind \colon \Omega \to \ZZ$ satisfy
    \eqref{property:Normalization}, \eqref{property:Multiplicativity}, 
    \eqref{property:HomotopyInvariance}, then $\Wind = \tilde\Wind$.
  \end{enumerate}
\end{theorem}

\begin{proof}
  The result \eqref{char:AlgebraicallyClosed} has been deduced in \sref{sub:FTAbyWindingNumber}.
  Regarding \eqref{char:RealClosed}, every $P \in \R[X]$ factors as $P = c_0(X-z_1)\dots(X-z_n)$ 
  with $z_1,\dots,z_n \in \C$.  Since $P(\overline{z_k}) = \overline{ P(z_k) } = 0$, 
  each $z_k \in \C \minus \R$ comes with its conjugate.  Pairing these
  we have $P = c_0(X-x_1)\cdots(X-x_r) Q_1 \cdots Q_s$  where $x_1,\dots,x_r \in \R$ 
  and $Q_j = (X-w_j)(X-\overline{w_j})$ with $w_1,\dots,w_s \in \C \minus \R$.
  The minimum of $Q_j  = X^2 - 2 \re(w_j) X + \abs{w_j}^2$ is $Q_j(\re w_j) = \abs{w_j}^2 - \re(w_j)^2 > 0$, 
  whence $Q_j(x) > 0$ for all $x \in \R$.
  If $P(a) P(b) < 0$ for some $a<b$ in $\R$, then $a < x_k < b$ for some zero $x_k$ of $P$.

  It remains to prove unicity \eqref{char:Unicity} of the winding number.
  Let $\gamma \colon [0,1] \to \C^*$ be a piecewise polynomial loop.
  If $\gamma$ lies in $\C \minus \R_{\le0}$, then we know 
  $\Wind(\gamma) = 0$ from Example \ref{exm:StarlikeHomotopy}.
  In general, $\gamma$ will cross the negative real axis $\R_{<0}$.  
  Since $\im\gamma \colon [0,1] \to \R$ is piecewise polynomial and $\R$ is 
  real closed by \eqref{char:AlgebraicallyClosed} and \eqref{char:RealClosed}, 
  we can use the intermediate value property.  We can assume that $\gamma$ 
  intersects $\R$ only a finite number of times $t_1,\dots,t_k$, where 
  $0 < t_1 < \dots < t_n < 1$; if not, then $c\gamma$ will do for some $c \in \C^*$. 
  We separate $t_1,\dots,t_k$ in disjoint intervals $I_k = [t_k-\varepsilon,t_k+\varepsilon]$ 
  for some sufficiently small $\varepsilon > 0$.  If $\gamma(t_k) > 0$, we set $\delta_k = 1$.
  If $\gamma(t_k) < 0$, then we define $\delta_k$ to be the loop of Example \ref{exm:DiamondLoop} 
  with support $I_k$: since $\im\gamma|I_k$ changes sign at most at $t_k$, the signs 
  $\delta_k(t_k\pm\nicefrac{\varepsilon}{2}) \in \{\pm i\}$ can be so chosen that $\im\gamma \cdot \im\delta_k \le 0$.
  Multiplication by $\delta_k$ changes $\gamma$ only on $I_k$ and ensures that 
  $\gamma \delta_k|I_k$ intersects $\R$ only in $\R_{>0}$.
  We thus obtain $\gamma \delta_1 \cdots \delta_n$ in $\C \minus \R_{\le0}$.
  From Example \ref{exm:StarlikeHomotopy}, we know $\Wind(\gamma \delta_1 \cdots \delta_n) = 0$, whence
  $-\Wind(\gamma) = \Wind(\delta_1) + \cdots + \Wind(\delta_n)$ by \eqref{property:Multiplicativity},
  and the right hand side is determined by \eqref{property:Normalization}, 
  \eqref{property:Multiplicativity}, \eqref{property:HomotopyInvariance} 
  as in Example \ref{exm:DiamondLoop}.
\end{proof}

\begin{Annotation}
  \textbf{(Detailed calculations)}
  For $t = t_k$ the assertion $(\gamma\delta_k)(t) \in \R_{>0}$ is clear by construction.
  For $t \le t_k - \varepsilon$ and for $t \ge t_k + \varepsilon$ there is nothing to prove.
  For $t \in \ee{t_k - \varepsilon}{t_k + \varepsilon} \minus \{t_k\}$ we have $\im\gamma \cdot \im\delta_k < 0$.
  For $\gamma\delta_k$ we find $\re(\gamma\delta_k) = \re\gamma \cdot \re\delta_k - \im\gamma \cdot \im\delta_k$
  and $\im(\gamma\delta_k) = \im\gamma \cdot \re\delta_k + \re\gamma \cdot \im\delta_k$, whence
  \[ 
  \im(\gamma\delta_k) \cdot \re\gamma \cdot \im\delta_k = (\im\gamma \cdot \im\delta_k) (\re\gamma \cdot \re\delta_k) + (\re\gamma \cdot \im\delta_k)^2 .
  \]
  If $\im(\gamma\delta_k)$ vanishes in some $t \in I_k$, then $(\re\gamma \cdot \re\delta_k)(t) \ge 0$, whence $\re(\gamma\delta_k)(t) > 0$.
\end{Annotation}  


\section{Algorithmic aspects} \label{sec:AlgorithmicAspects}

The preceding sections \sref{sec:Cauchy} and \sref{sec:FTA} show how to
construct the algebraic winding number over a real closed field $\R$.
We have used it for proving existence and locating the roots of polynomials 
over $\C = \R[i]$.  This section discusses algorithmic questions.  
To this end we have to narrow the scope: in order to work with convergence 
of sequences in $\R$, we additionally assume the ordered field $\R$ to be archimedean, 
which amounts to $\R \subset \RR$. 

The algorithm described here is often attributed to Wilf \cite{Wilf:1978} in 1978, 
but it was already explicitly described by Sturm \cite{Sturm:1836} 
and Cauchy \cite{Cauchy:1837} in the 1830s. 
It can also be found in Runge's \emph{Encyklop\"adie} article 
\cite[Kap.\,IB3, \textsection a6]{EncyklMathWiss} in 1898. 
Numerical variants are known as \emph{Weyl's quadtree method} (1924)
or \emph{Lehmer's method} (1961); see \sref{sub:ConstructiveAspects}.
I propose to call it the \emph{Sturm--Cauchy method}, or \emph{Cauchy's algebraic method}
if emphasis is needed to differentiate it from Cauchy's analytic method using integration.
For a thorough study of complex polynomials see Marden \cite{Marden:1966}, 
Henrici \cite{Henrici:1974}, and Rahman--Schmeisser \cite{RahmanSchmeisser:2002};
the latter contains extensive historical notes and a guide to the literature.

\subsection{Turing computability} \label{sub:TuringComputability}

The theory of ordered or orderable fields, nowadays called \emph{real algebra},
was initiated by Artin and Schreier \cite{ArtinSchreier:1926,ArtinSchreier:1927} in the 1920s; 
a spectacular early success was Artin's solution \cite{Artin:1927} of Hilbert's 17th problem.
Since the 1970s real-algebraic geometry is flourishing anew \cite{BochnakCosteRoy:1998} 
and, with the advent of computers, algorithmic 
aspects have gained importance \cite{BasuPollakRoy:2006}.
We shall focus here on basic questions of computability.


\begin{definition}
  We say that an ordered field $(\R,+,\cdot,<)$ can be implemented
  on a Turing machine if each element $a \in \R$ can be coded as input/output 
  for such a machine and each of the field operations $(a,b) \mapsto a+b$, 
  $a \mapsto -a$, $(a,b) \mapsto a \cdot b$, $a \mapsto a^{-1}$ as well as 
  the comparisons $a = b$, $a < b$ can be carried out by a uniform algorithm.
\end{definition}

\begin{example}
  The field $(\RR,+,\cdot,<)$ of real numbers cannot be implemented
  on a Turing machine because the set $\RR$ is uncountable:
  it is impossible to code each real number by a finite string 
  over a finite alphabet, as required for input/output. 
  This argument is independent of the chosen representation.
  If we insist on representing each and every real number, then 
  this fundamental obstacle can only be circumvented by postulating
  a hypothetical \emph{real number machine} \cite{SmaleEtAl:1998},
  which transcends the traditional setting of Turing machines.
\end{example}

\begin{example}
  The subset $\RR_{\mathrm{comp}} \subset \RR$ of computable real numbers, 
  as defined by Turing \cite{Turing:1936} in his famous 1936 article, 
  forms a countable, real closed subfield of $\RR$.  
  Each computable number $a$ can be represented as input/output for a universal Turing machine 
  by an algorithm that approximates $a$ to any desired precision.
  This overcomes the obstacle of the previous example by restriction to $\RR_{\mathrm{comp}}$.
  Unfortunately, not all operations of $(\RR_{\mathrm{comp}},+,\cdot,<)$ 
  can be implemented.  There exists no algorithm that for each 
  computable real number $a$, given in form of an algorithm,
  determines whether $a=0$, or more generally determines the sign of $a$.
  (This is an instance of the notorious Entscheidungsproblem.)
\end{example}


\begin{example}
  The algebraic closure $\QQ^c$ of $\QQ$ in $\RR$ is a real closed field.
  Unlike the field of computable real numbers, the much smaller subfield $(\QQ^c,+,\cdot,<)$ 
  can be implemented on a Turing machine \cite{RoySzpirglas:1990,Roy:1996}.
  More specifically, consider a polynomial $F = c_0 Z^n +  c_1 Z^{n-1} + \dots + c_n$ 
  whose coefficients $c_k \in \CC$ are algebraic over $\QQ$.  Then $\re(c_k)$ and $\im(c_k)$ 
  are also algebraic, and the field $\R = \QQ(\re(c_0),\im(c_0),\dots,\re(c_n),\im(c_n)) \subset \QQ^c$ 
  is a finite extension over $\QQ$.  It can be generated by one element, which means $\R = \QQ(\alpha)$ 
  for some $\alpha \in \R$, and such a presentation makes it convenient for implementation.
\end{example}

\subsection{The Sturm--Cauchy root-finding algorithm} \label{sub:GlobalRootFinding}

We consider a complex polynomial 
\[ F = c_0 Z^n +  c_1 Z^{n-1} + \dots + c_{n-1} Z + c_n \quad\text{in}\quad \CC[Z] \]
that we assume to be \emph{Turing implementable}, that is, we require the ordered field 
\[ \QQ(\re(c_0),\im(c_0),\dots,\re(c_n),\im(c_n)) \subset \RR \]
to be implementable in the preceding sense. 
We begin with the following preparations.
\begin{itemize}
\item We divide $F$ by $\gcd(F,F')$ to ensure that all roots of $F$ are simple.
\item As in Remark \ref{rem:CauchyPolynomial} we determine $r \in \NN$ such that all roots of $F$ lie in $B(r)$.
\end{itemize}

The following terminology will be convenient:
a \emph{$0$-cell} is a singleton $\{a\}$ with $a \in \CC$;
a \emph{$1$-cell} is an open line segment, either vertical $\{x_0\} \times \ee{y_0}{y_1}$
or horizontal $\ee{x_0}{x_1} \times \{y_0\}$ with $x_0 < x_1$ and $y_0 < y_1$ in $\RR$;
a \emph{$2$-cell} is an open rectangle $\ee{x_0}{x_1} \times \ee{y_0}{y_1}$ in $\CC$.

It is immediate to check whether a $0$-cell contains a root of $F$.
Sturm's theorem (Corollary \ref{cor:CountingRealRoots}) allows us to count 
the roots of $F$ in a $1$-cell $\ee{a}{b}$: for $G = F( a + X (b-a) )$ in $\CC[X]$ 
calculate $P = \gcd(\re G, \im G)$ in $\RR[X]$ and count roots of $P$ in $\ee{0}{1}$.
Cauchy's theorem (Theorem \ref{thm:CountingComplexRoots}) allows us to count the roots in a $2$-cell.
In both cases the crucial subalgorithm is the computation of Sturm chains 
which we will discuss in \sref{sub:SubresultantAlgorithm} below.

Building on these methods, the root-finding algorithm successively for $t=0,1,2,3\dots$ 
constructs a list $L_t = \{\Gamma_1,\dots,\Gamma_{n_t}\}$ of disjoint cells, which behaves as follows:
\begin{itemize}
\item Each root of $F$ is contained in exactly one cell $\Gamma \in L_t$.
\item Each cell $\Gamma \in L_t$ contains at least one root of $F$.
\item Each cell $\Gamma \in L_t$ has diameter $\le 3r \cdot 2^{-t}$.
\end{itemize}

The algorithm proceeds as follows:
To begin, we initialize $L_0 = \{\Gamma\}$ with the square $\Gamma = \ee{-r}{+r} \times \ee{-r}{+r}$.
Given $L_t$ we construct $L_{t+1}$ by treating each cell in $L_t$ as follows.
\begin{enumerate}
  \setcounter{enumi}{-1}
\item
  Retain all $0$-cells unchanged.
\item
  Bisect each $1$-cell into two $1$-cells of equal length as in Figure \ref{fig:Quadrisection},
  which also creates one interior $0$-cell.  Retain each new cell that contains a root of $F$.
\item
  Quadrisect each $2$-cell into four $2$-cells of equal size as in Figure \ref{fig:Quadrisection},
  which creates four interior $1$-cells and one $0$-cell.  
  Retain each new cell that contains a root of $F$.
\end{enumerate}

\begin{figure}[ht]
  \hfill
  \includegraphics[height=15ex]{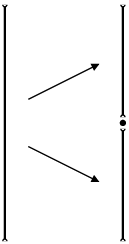} 
  \hfill
  \includegraphics[height=15ex]{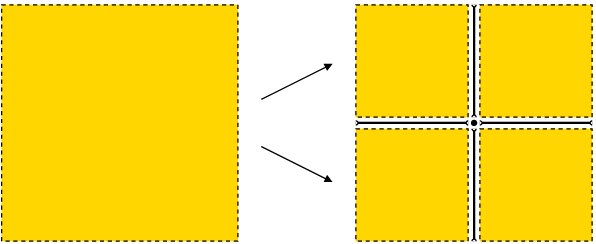} 
  \hfill{}
  \caption{Bisecting a $1$-cell and quadrisecting a $2$-cell}
  \label{fig:Quadrisection}
\end{figure}

Collecting all retained cells we obtain the new list $L_{t+1}$.
After some initial iterations (\sref{sub:SubresultantAlgorithm}) 
all roots will lie in disjoint cells $\Gamma_1,\dots,\Gamma_n$, each containing precisely one root.  
Taking the midpoint $u_k \in \Gamma_k$, this can be seen as $n$ approximate roots $u_1,\dots,u_n$, 
each with an error bound $\delta_k \le \sqrt{2} \, r / 2^t$ such that 
each $u_k$ is $\delta_k$-close to a root of $F$.

\subsection{Crossover to Newton's local method} \label{sub:NewtonMethod}

For $F \in \CC[Z]$, Newton's method consists in iterating the map 
$\Phi(z) = z - F(z)/F'(z)$ defined on $\{\, z \in \CC \mid F'(z) \ne 0 \,\}$.
This simple technique is very powerful because of its local behaviour around zeros.

\begin{theorem} \label{thm:LocalNewton}
  The fixed points of Newton's map $\Phi(z) = z - F(z)/F'(z)$ are the simple zeros of $F$,
  that is, the points $z_0 \in \CC$ such that $F(z_0) = 0$ and $F'(z_0) \ne 0$.
  For each fixed point $z_0$ there exists $\delta > 0$ 
  such that every initial value $u_0 \in B(z_0,\delta)$ satisfies 
  \begin{equation} \label{eq:NewtonConvergence}
    \abs{\Phi^t(u_0)-z_0} \le 2^{1-2^t} \cdot \abs{u_0-z_0} 
    \qquad \text{for all $t \in \NN$.} 
    \qedhere
  \end{equation}
\end{theorem}

The convergence to $z_0$ is thus very fast but requires 
a good initial approximation $u_0 \approx z_0$; 
otherwise Newton's iteration may be slow at first or not converge at all.
On a practical level this raises two problems:
first, how to find approximate zeros, and second, how to determine 
whether a given approximation is sufficiently good to guarantee 
fast convergence as in \eqref{eq:NewtonConvergence}?
The global root-finding algorithm of \sref{sub:GlobalRootFinding} approximates 
all roots simultaneously, and the following criterion 
exploits this information for launching Newton's method:

\begin{theorem} \label{thm:GlobalLocalSwitch}
  Let $F \in \CC[Z]$ be a separable polynomial of degree $n \ge 2$.
  Suppose we have separated the roots $z_1,\dots,z_n$ of $F$ 
  in closed disks $\bar{B}(u_1,\delta_1),\dots,\bar{B}(u_n,\delta_n)$ such that
  \begin{equation} \label{eq:NewtonSeparation}
    \textstyle
    3n \delta_k \le \abs{ u_k - u_j } 
    \qquad\text{for all $j \ne k$.}
  \end{equation}
  Then Newton's iteration satisfies 
  $\abs{\Phi^t(u_k)-z_k} \le 2^{1-2^t} \cdot \delta_k$ for all $t \in \NN$.
\end{theorem}

\begin{proof}
  For $F = (Z-z_1)\cdots(Z-z_n)$ we have $F'/F = \sum_{j=1}^n (Z-z_j)^{-1}$.
  This implies that $\Phi(z) = z - 1/\sum_{j=1}^n (z-z_j)^{-1}$, 
  provided that $F(z) \ne 0$ and $F'(z) \ne 0$, whence
  \[
  \frac{\Phi(z)-z_k}{z-z_k} = 1 - \frac{1}{\sum_{j=1}^n \frac{z-z_k}{z-z_j}}
  = \frac{ \sum_{j \ne k} \frac{z-z_k}{z-z_j} }{ 1 + \sum_{j \ne k} \frac{z-z_k}{z-z_j} } .
  \]
  
  By hypothesis, we have approximate roots 
  $u_1,\dots,u_n$ such that $\abs{u_k-z_k} \le \delta_k$.  
  Consider an arbitrary point $z \in \bar{B}(z_k,\delta_k)$,
  which entails $\abs{ z - u_k } \le 2 \delta_k$.
  For all $j \ne k$ we find
  \[
  \abs{ z - z_j } \ge \abs{ u_k - u_j } - \abs{ z - u_k } - \abs{ z_j - u_j } 
  \ge \abs{ u_k - u_j } - 2\delta_k - \delta_j \ge (3n-3) \delta_k ,
  \]
  where the last inequality $(3n-1) \delta_k + \delta_j \le \abs{ u_k - u_j }$
  is a convex linear combination of \eqref{eq:NewtonSeparation}. 
  This ensures that $\bigl|\sum_{j \ne k} \frac{z-z_k}{z-z_j}\bigr| 
  \le \sum_{j \ne k} \bigl|\frac{z-z_k}{z-z_j}\bigr| 
  \le \frac{\abs{z-z_k}}{3\delta_k} \le \frac{1}{3}$. 
  For $z \ne z_k$ this implies 
  \[
  \left| \frac{\Phi(z)-z_k}{z-z_k} \right|
  \le \frac{ \left| \sum_{j \ne k} \frac{z-z_k}{z-z_j} \right| }
  { 1 - \left| \sum_{j \ne k} \frac{z-z_k}{z-z_j} \right| }
  \le \frac{ \frac{1}{3\delta_k} \abs{z-z_k} }{ 1 - \frac{1}{3} } = \frac{\abs{z-z_k}}{2\delta_k} .
  \]
  
  For all $z \in \bar{B}(z_k,\delta_k)$ we conclude that 
  $\abs{ \Phi(z)-z_k } \le \frac{1}{2\delta_k} \abs{z-z_k}^2$,
  whence $\abs{\Phi^t(z)-z_k} \le 2^{1-2^t} \cdot \abs{z-z_k}$ by induction on $t \in \NN$.
  In particular this holds for $z = u_k$. 
\end{proof}

\begin{Annotation}
  \textbf{(Isolating a single root)}
  The proof of Theorem \ref{thm:GlobalLocalSwitch} also applies when we isolate only one root.
  The only requirement is that all other roots must be sufficiently far away:

  \begin{Theorem}
    Let $F \in \CC[Z]$ be of degree $n \ge 2$.
    Suppose that $\bar{B}(u_1,\delta)$ contains exactly one root $z_1$ of $F$,
    and $\bar{B}(u_1,(3n-1)\delta)$ contains no further root.  Then Newton's iteration 
    satisfies $\abs{\Phi^t(u_1)-z_1} \le 2^{1-2^t} \cdot \delta$ for all $t \in \NN$.
  \end{Theorem}

  \begin{proof}
    For $z \in \bar{B}(z_1,\delta)$ we have $\abs{ z - u_1 } \le 2 \delta$, 
    whence $\abs{ z - z_j } \ge \abs{ u_1 - z_j } - \abs{ z - u_1 }  \ge 3(n-1) \delta$.
    This ensures that $\bigl|\sum_{j > 1} \frac{z-z_1}{z-z_j}\bigr| 
    \le \sum_{j > 1} \bigl|\frac{z-z_1}{z-z_j}\bigr| 
    \le \frac{\abs{z-z_1}}{3\delta} \le \frac{1}{3}$.
    As above, we conclude that $\abs{ \Phi(z)-z_0 } \le \frac{1}{2\delta} \abs{z-z_0}^2$.
  \end{proof}
\end{Annotation}

\begin{Annotation}
  \textbf{(Better convergence)}
  We want to approximate the root $z_k$ starting from $u_k$. 
  But we also have approximations to the other roots, so we should try to exploit this extra information.
  Instead of $F$ we may apply Newton's method to $G_k(z) = F(z) / \prod_{j \ne k} (z-u_j)$.
  On $\bar{B}(z_k,\delta_k)$ both $F$ and $G_k$ have the same root $z_k$, but $G_k$ yields better convergence properties.
  This is obvious in the special case where $u_j = z_j$ for all $j \ne k$, which leads to the deflated polynomial $G_k(z) = z-z_k$.
  In general we only have $u_j \approx z_j$, so that $G_k$ is more complicated than $F$.
  Nevertheless it is advantageous for Newton's iteration because the basin of attraction around $z_k$ is bigger.

  \begin{Theorem} 
    Let $F \in \CC[Z]$ be a separable polynomial of degree $n \ge 4$.
    Suppose we have separated the roots $z_1,\dots,z_n$ of $F$ 
    in closed disks $\bar{B}(u_1,\delta_1),\dots,\bar{B}(u_n,\delta_n)$ such that
    \begin{equation} 
      \textstyle
      4 \sqrt{n-1} \cdot \delta_k \le \abs{ u_k - u_j } 
      \qquad\text{for all $j \ne k$.}
    \end{equation}
    Then Newton's iteration applied to $G_k$ satisfies 
    $\abs{\Phi^t(u_k)-z_k} \le 2^{1-2^t} \cdot \delta_k$ for all $t \in \NN$.
  \end{Theorem}

  \begin{proof}
    We follow the lines of the previous proof.   First we see that 
    \[ 
    \frac{G_k'(z)}{G_k(z)} 
    = \frac{1}{z-z_k} + \sum_{j \ne k} \frac{1}{z-z_j} - \frac{1}{z-u_j} 
    = \frac{1}{z-z_k} + \sum_{j \ne k} \frac{z_j-u_j}{ (z-z_j) (z-u_j) } .
    \]
    For $z \in \bar{B}(z_k,\delta_k)$ we iterate the map $\Phi(z) = z - G_k(z) / G_k'(z)$.  
    For $z \ne z_k$ the relative error is 
    \[ 
    \frac{ \Phi(z) - z_k }{ z - z_k } = \frac{ H(z) }{ 1 + H(z) }
    \quad\text{where}\quad
    H(z) = \sum_{j \ne k} \frac{ (z-z_k) (z_j - u_j) }{ (z-z_j) (z-u_j) } .
    \]
    For the factors in the denominator we find 
    $\abs{z-z_j} \ge \abs{u_k-u_j} - \abs{z-u_k} - \abs{z_j-u_j} \ge \abs{u_k-u_j} - 2 \delta_k - \delta_j$
    and $\abs{z-u_j} \ge \abs{u_k-u_j} - \abs{z-u_k} \ge \abs{u_k-u_j} - 2\delta_k$, whence
    \[
    \abs{z-z_j} \cdot \abs{z-u_j} 
    \ge \abs{u_k-u_j}^2 - (4\delta_k+\delta_j) \abs{u_k-u_j}
    \ge \abs{u_k-u_j}^2 ( 1 - 5/(4\sqrt{n-1}) ) .
    \]
    Here we have used the hypothesis $\delta_k \le \abs{ u_k - u_j } / (4 \sqrt{n-1})$.
    A small calculation leads to the estimate
    \[
    \frac{ \delta_k \delta_j }{ \abs{z-z_j} \cdot \abs{ z-u_j } }
    \le \frac{ 1/[16(n-1)] }{ 1 - 5/[4\sqrt{n-1}] }
    \le \frac{1}{3 (n-1)} .  
    \]
    We thus obtain 
    \[
    \abs{H(z)} 
    \le \frac{\abs{z-z_k}}{\delta_k} \sum_{j \ne k} \frac{ \delta_k \delta_j }{ \abs{z-z_j} \cdot \abs{ z-u_j } }
    \le \frac{\abs{z-z_k}}{\delta_k} \cdot \frac{1}{3} \le \frac{1}{3} .  
    \]
    For the relative error this implies 
    \[
    \left| \frac{ \Phi(z) - z }{ z - z_k } \right|
    \le \frac{ \abs{ H(z) } }{ 1 - \abs{ H(z) } }
    \le \frac{ \frac{1}{3\delta_k} \abs{z-z_k} }{ 1 - \frac{1}{3} } 
    \le \frac{ \abs{z-z_k} }{ 2 \delta_k } .
    \]
    For all $z \in \bar{B}(z_k,\delta_k)$ we conclude that 
    $\abs{ \Phi(z)-z_k } \le \frac{1}{2\delta_k} \abs{z-z_k}^2$,
    whence $\abs{\Phi^t(z)-z_k} \le 2^{1-2^t} \cdot \abs{z-z_k}$ by induction on $t \in \NN$.
    In particular this holds for $z = u_k$. 
  \end{proof}
\end{Annotation}  

As an alternative to the tailor-made criterion of Theorem \ref{thm:GlobalLocalSwitch},
the following theorem of Smale \cite[chap.\,8]{SmaleEtAl:1998} 
provides a far more general convergence criterion in terms of local data. 
It applies in particular to polynomials, where it is most easily implemented.

\begin{theorem}[Smale 1986]
  Let $f \colon U \to \CC$ be an analytic function on some open set $U \subset \CC$.
  Consider $u_0 \in U$ satisfying $f(u_0) \ne 0$ and $f'(u_0) \ne 0$, so that
  $\eta = \abs{ f(u_0) / f'(u_0) } > 0$ is the initial displacement in Newton's iteration.  
  Suppose further that $B(u_0,2\eta) \subset U$ and the expansion 
  $f(z) = \sum_{k=0}^\infty a_k (z-u_0)^k$ satisfies
  $\abs{a_k} \le (8\eta)^{1-k} \abs{a_1}$ for all $k \ge 2$.
  Then $f$ has a unique zero $z_0$ in $B(u_0,2\eta)$
  and Newton's iteration converges as in \eqref{eq:NewtonConvergence}.
\end{theorem}

\subsection{Fast Cauchy index computation} \label{sub:SubresultantAlgorithm}

To complete the picture we briefly consider the bit-complexity of 
the Sturm--Cauchy algorithm described in \sref{sub:GlobalRootFinding}.
In order to simplify we will work over the rational numbers.
The fundamental problem is, for given $R/S \in \QQ(X)$, to compute $\Ind_0^1(\frac{R}{S})$.
To this end we wish to construct some chain $S_0,S_1,\dots,S_n \in \QQ[X]^*$ 
starting with $S_1/S_0 = R/S$ and ending with $S_n \in \QQ^*$ such that 
\begin{equation} \label{eq:RationalChain}
  A_k S_{k+1} + B_k S_k + C_k S_{k-1} = 0
  \quad\text{with}\quad A_k \in \QQ^*, \; B_k \in \QQ[X], \; C_k \in \QQ 
\end{equation}
for all $k=1,\dots,n-1$.  The signs can then easily 
be arranged such that $A_k > 0$ and $C_k \ge 0$, 
which ensures that we have a Sturm chain according to 
Proposition \ref{prop:AlgebraicSturmCriterion}.

The euclidean algorithm for polynomials 
of degree $\le n$ takes $O(n^3)$ operations in $\QQ$.
A suitable divide-and-conquer algorithm \cite[chap.\,11]{GathenGerhard:2003}
reduces this to $\tilde{O}(n^2)$ operations in $\QQ$;
here the asymptotic complexity $\tilde{O}(n^\alpha)$ neglects 
logarithmic factors $\log(n)^\beta$. 
A closer look reveals that we only need the data $A_k,B_k,C_k$ for $k=1,\dots,n-1$, 
and these can be calculated with only $\tilde{O}(n)$ operations in $\QQ$.
Given $S_0,S_1$ and $A_k,B_k,C_k$ for all $k$, we can 
evaluate $S_0(x),S_1(x),\dots,S_n(x)$ at any given $x \in \QQ$ 
using the recursion \eqref{eq:RationalChain} with $O(n)$ operations in $\QQ$.
Finally, we have to control the size of the coefficients that appear during the computation.
According to Lickteig--Roy \cite{LickteigRoy:2001}, the result is the following.



\begin{theorem} \label{thm:FastCauchyIndexComputation}
  Given polynomials $R,S \in \ZZ[X]$ of degree $\le n$ and coefficients bounded by $2^a$,
  the Cauchy index $\Ind_0^1(\frac{R}{S})$ can be computed using $\tilde{O}(n^2 a)$ bit-operations.
  \qed
\end{theorem}




This can be applied to locating complex roots.
Let $F = c_0 Z^n + c_1 Z^{n-1} + \dots + c_n$ be a polynomial 
with Gaussian integer coefficients $c_0,c_1,\dots,c_n \in \ZZ[i]$
bounded by $\abs{\re c_k} < 2^a$ and $\abs{\im c_k} < 2^a$ for all $k=0,\dots,n$.
For simplicity we further assume that $n < 2^a$ and $a \le nb$, 
where $b$ is the desired bit-precision for approximating the roots.

\begin{corollary}
  Suppose that all roots of $F$ lie in the disk $B(r)$.
  The Sturm--Cauchy algorithm determines all roots of $F$ 
  to a precision $\sqrt{2} \, r / 2^b$ using $\tilde{O}( n^4 b^2 )$ bit-operations.
\end{corollary}

\begin{proof}
  According to Theorem \ref{thm:FastCauchyIndexComputation},
  we can compute $\Ind_0^1(\frac{\re F}{\im F})$ using $\tilde{O}(n^2 a)$ bit-operations.
  We can reparametrize $F$ to calulcate the index along any line segment, and thus along the boundary of any rectangle.
  In the Sturm--Cauchy algorithm (\sref{sub:GlobalRootFinding}), this has to be iterated $b$ times in order to 
  achieve the desired precision, and the coefficients are bounded by $2^{a+nb}$. 
  Since we assume all roots of $F$ to be distinct, they ultimately become separated
  so that the algorithm has to follow $n$ approximations in parallel.
  This multiplies the previous bound by a factor $n b$,
  so we arrive at $\tilde{O}( n^3 b (a+nb) )$ bit-operations.
\end{proof}

\begin{Annotation}
  \textbf{(Data structures)}
  The \emph{construction} of the Sturm chain 
  is the most expensive step in the above root-finding algorithm.
  In the real case we have to construct this chain
  only once because we can reuse it in all subsequent iterations.
  In the complex case, each segment requires a separate computation:
  it is thus advantageous to store each segment with 
  its corresponding Sturm chain, and each square 
  with the four Sturm chains along the boundary,
  so as to reuse precious data as much as possible.
\end{Annotation}  

\begin{Annotation}
  \textbf{(Algorithms)}
  The algebraic algorithm is straightforward to implement 
  except for two standard subalgorithms, namely fast integer arithmetic 
  and fast subresultant computation for integer polynomials.  
  These subalgorithms are theoretically well-understood,
  and their complexity is known and nearly optimal.
  Their implementation is laborious, but is available 
  in general-purpose libraries for integer and polynomial arithmetic.

  The algebraic algorithm uses exact arithmetic.  This means that during its execution 
  we do not have to worry about error propagation, which simplifies (formal) correctness proofs.
\end{Annotation}  

\begin{Annotation}
  \textbf{(Parallelization)}
  We can adapt the algorithm to find only \emph{one} root of $F$,
  and according to the preceding proof its complexity is $\tilde{O}( n^3 b^2 )$,
  again neglecting terms of order $\log(n)$.  This approach is parallelizable: 
  whenever bisection separates the roots into nonempty clusters,
  these can then be processed by independent computers working in parallel.
  The parallel complexity thus drops to $\tilde{O}( n^3 b^2 )$.
\end{Annotation}

To which bit-precision $b$ should we apply this algorithm?  Here is an a priori estimate.


\begin{corollary}
  We can switch to Newton's method after at most $b = 3 n a$ iterations 
  in the Sturm--Cauchy algorithm .  This amounts to $\tilde{O}( n^6 a^2 )$ bit-operations.
\end{corollary}

\begin{proof}
  Given $F = c_0 Z^n + c_1 Z^{n-1} + \dots + c_n = c_0 (Z-z_1) \cdots (Z-z_n)$ 
  as above with $c_0 \ne 0$, its discriminant $\disc(F) = c_0^{2n-2} \prod_{j<k} (z_j-z_k)^2$ 
  is an integer polynomial in the coefficients $c_0,c_1,\dots,c_n$.
  Here $c_0,c_1,\dots,c_n \in \ZZ[i]$, so $\disc(F) \in \ZZ[i]$.
  Since we assume $z_1,\dots,z_n$ to be pairwise distinct, 
  we have $\disc(F) \ne 0$, whence $\abs{\disc(F)} \ge 1$.
  According to Mahler \cite{Mahler:1964}, the minimal root distance 
  $\Delta(F) := \min_{j \ne k} \abs{ z_j - z_k }$ is bounded below by 
  \[ \Delta(F) > \sqrt{ 3 \, \abs{ \disc(F) } \, / n^{n+2} } \; \abs{F}^{1-n} , \]
  where $\abs{F} = \abs{c_0} + \abs{c_1} + \dots + \abs{c_n}$.  
  Our hypothesis $\abs{\re c_k} \le 2^a-1$ and $\abs{\im c_k} \le 2^a-1$
  implies $\abs{c_k} \le \sqrt{2} (2^a-1)$ for all $k=0,1,\dots,n$. 
  By Proposition \ref{prop:CauchyBound}, the zeros $z_1,\dots,z_n$ lie in the disk $B(r)$ of radius $r = \frac{3}{2} \cdot 2^a$.
  After $b$ quadrisections of the square $[-r,+r]^2$, 
  we have approximate roots $u_k \in \bar{B}(z_k,\delta_k)$ with $\delta_k \le \sqrt{2} \, r / 2^b$. 
  Assuming $b = 3 n a$ and $2^a > n$ we find, after some calculation, that $3 n \delta_k < \Delta(F)$, 
  so we can apply Theorem \ref{thm:GlobalLocalSwitch}.
\end{proof}

\begin{Annotation}
  \textbf{(Detailed calculations)}
  We have $\abs{F} \le (n+1) \sqrt{2} \cdot (2^a-1)$ and $(n+1) \le 2^a$.
  \begin{align*}
    3 n \delta_k & \le 3 n \sqrt{2} \, r / 2^b = \frac{9}{2}\sqrt{2} \, n \, 2^{a-3na}
    = \frac{9}{2}\sqrt{2} \, n \, 2^{-na/2} \cdot (2^a)^{-3n/2} \cdot (2^a)^{1-n}
    \\ & \le \frac{9}{2}\sqrt{2} \, n \, 2^{-na/2} \cdot (n+1)^{-3n/2} \cdot (n+1)^{n-1} \abs{F}^{1-n}
    \\ & \le \frac{9}{2}\sqrt{2} \, n \, 2^{-na/2} \Bigl( \frac{n}{n+1} \Bigr)^{n/2+1} \cdot n^{-n/2-1} \cdot \abs{F}^{1-n}
    \\ & \le \sqrt{3} \cdot n^{-n/2-1} \cdot \abs{F}^{1-n} < \Delta(F) .
  \end{align*}
  We should point out that the pessimistic bound $\abs{\disc(F)} \ge 1$ ensures root separation even in the worst case;
  for most polynomials the algorithm will separate the roots somewhat faster.
\end{Annotation}

\subsection{What remains to be improved?} \label{sub:AlgorithmicImprovements}

Root-finding algorithms of bit-complexity $\tilde{O}( n^3 b )$
are state-of-the-art since the ground-breaking work of Sch\"onhage 
\cite[Thm.\,19.2]{Schoenhage:1986} in the 1980s. 
The Sturm--Cauchy algorithm is of complexity $\tilde{O}( n^4 b^2 )$ 
and thus comes close, but in its current form remains two orders
of magnitude more expensive.  Sch\"onhage remarks:
\begin{quote}
  ``It is not clear whether methods based on Sturm sequences can possibly 
  become superior.  Lehmer \cite{Lehmer:1961,Lehmer:1969} and Wilf \cite{Wilf:1978} 
  both do not solve the extra problems which arise, if there is a zero 
  on the test contour (circle or rectangle) or very close to it.''
  \cite[p.\,5]{Schoenhage:1986} 
\end{quote}

Our algebraic development neatly solves the problem of roots on the boundary.
Regarding complexity, we have applied the \emph{divide-and-conquer} paradigm 
in the arithmetic subalgorithms (\sref{sub:SubresultantAlgorithm})
but not yet in the root-finding method itself.
In Sch\"onhage's method this is achieved by approximately factoring $F$ 
of degree $n$ into two polynomials $F_1, F_2$ of degrees close to $\frac{n}{2}$.
Perhaps an analogous strategy can be put into practice in the algebraic setting;
some clever idea and a more detailed investigation are needed here.

\begin{Annotation}
  \textbf{(Root isolation)}
  Sch\"onhage's algorithm is extremely well-crafted: it achieves root isolation
  using only $\tilde{O}(n^3 a)$ bit operations for any (square-free) polynomial
  of degree $n$ with integer coefficients bounded by $2^a$ \cite[Thm.\,20.1]{Schoenhage:1986}.
  This satisfactorily solves the problem of root finding: as explained above in Theorem \ref{thm:GlobalLocalSwitch}, 
  one can then switch to Newton's method and increase the precision to any desired accuracy with little extra cost.
\end{Annotation}  

Besides complexity there is still another problem:
approximating the roots of a polynomial $F \in \CC[Z]$ can only be as good as 
the initial data, and we therefore assume that $F$ is known exactly.
This is important because root-finding can be ill-conditioned 
\cite{Wilkinson:1959}. 
Even if exact arithmetic can avoid this problem during the computation, 
it comes back into focus when the initial data is itself only an approximation.  
In this situation the real-algebraic approach requires a detailed error analysis,
ideally in the setting of interval arithmetic. 



\subsection{Formal proofs} \label{sub:FormalProof}

In recent years the theory and practice of \emph{formal proofs} and 
\emph{computer-verified theorems} has become a full fledged enterprise.
Prominent examples include the Jordan Curve Theorem \cite{Hales:2007}
and the Four Colour Theorem \cite{Gonthier:2008}.
(For an overview of some ``top 100'' theorems see \cite{Wiedijk:Top100}.)
Driven by these achievements, the computer-verified proof community envisages 
much more ambitious goals, such as the classification of finite simple groups.
%
Such gigantic projects make results like the Fundamental Theorem of Algebra 
look like toy examples, but their formalization is by no means a trivial task.
%
The real-algebraic approach 
offers certain inherent advantages, mainly its simplicity and algorithmic nature.
The latter is an important virtue: Theorem \ref{thm:RootLocation} 
is not only an existence statement but provides an algorithm.  
A formal proof of the theorem can thus serve as a formal 
correctness proof of an implementation. 

\begin{Annotation}
  \textbf{(Ongoing debate)}
  Computer-assisted proofs have been intensely debated and 
  their scope and mathematical reliability have been questioned.
  The approach is still in its infancy compared to traditional viewpoints
  and its long-ranging impact on mathematics remains to be seen,
  but its achievements are already promising.
  
  We should emphasize that the formalization of mathematical 
  theorems and proofs and their computer verification may be motivated 
  by several factors.  Some theorems, of varying difficulty,
  have been formalized in order to show that this 
  is possible in principle and to gain practical experience.
  While pedagogically important for proof formalization itself,
  the traditional mathematician will find no added value in such examples.
  
  More complicated theorems, such as the examples above, 
  raise the intrinsic motivation for formalization and 
  computer-verified proofs, because there is an enormous number 
  of cases to be solved and verified.  Whenever human fallibility 
  becomes a serious practical problem, as in these cases, 
  a well established and trustworthy verification tool clearly has its merit.
  This is particularly true if the mathematical model is implemented 
  on a computer for practical applications, and a high level of security is required.  
  It is in this realm that computer-assisted correctness proofs are most widely appreciated.
\end{Annotation}


\section{Historical remarks} \label{sec:HistoricalRemarks}

The Fundamental Theorem of Algebra is a crowning achievement in the history of mathematics.
In order to place the real-algebraic approach into perspective, 
this section sketches its historical context.
For the history of the Fundamental Theorem of Algebra
we refer to Remmert \cite{Remmert:1991},
Dieudonn\'e \cite[chap.\,II, \textsection III]{Dieudonne:1978},
and van der Waerden \cite[chap.\,5]{vanDerWaerden:1985}. 
The history of Sturm's theorem has been examined 
in great depth by Sinaceur \cite{Sinaceur:1991}.

\subsection{Polynomial equations} \label{sub:Prehistory}

The method to solve quadratic equations was already known to the Babylonians.
Not much progress was made until the 16th century, when
del Ferro (around 1520) and Tartaglia (1535) 
discovered a solution for cubic equations by radicals.
Cardano's student Ferrari extended this to a solution of quartic equations by radicals.
Both formulae were published in Cardano's \textit{Ars Magna} in 1545.  
Despite considerable efforts during the following centuries,
no such formulae could be found for degree $5$ and higher.  They were finally
shown not to exist by Ruffini (1805), Abel (1825), and Galois (1831).
This solved one of the outstanding problems of algebra, alas in the negative.

The lack of general formulae provoked the question whether solutions exist at all.
The existence of $n$ roots for each real polynomial of degree $n$
was mentioned by Roth (1608) and explicitly conjectured by Girard (1629) 
and Descartes (1637).  They postulated these roots in some extension of $\RR$ 
but did not claim that all roots are contained in the field $\CC = \RR[i]$ of complex numbers.
Leibniz (1702) even speculated that this is in general not possible.
The first attempts to prove the Fundamental Theorem of Algebra were made 
by d'Alembert (1746), Euler (1749), Lagrange (1772), and Laplace (1795).

\subsection{Gauss' geometric proof} \label{sub:Gauss}

In his doctoral thesis (1799) Gauss criticized the shortcomings 
of all previous tentatives and presented a geometric argument, 
which is commonly considered the first satisfactory proof of the Fundamental Theorem of Algebra.

In summary, Gauss considers a polynomial $F = Z^n + c_1 Z^{n-1} + \dots + c_{n-1} Z + c_n$
and upon substitution of $Z = X + i Y$ obtains $F = R + i S$ with $R,S \in \RR[X,Y]$.
The zeros of $F$ are precisely the intersections 
of the two curves $R=0$ and $S=0$ in the plane.
Consider a disk $\Gamma$ centered in $0$ with sufficiently large radius.
Near the circle $\partial\Gamma$ these curves resemble 
the zero sets of the real and imaginary parts of $Z^n$.  
The latter are $2n$ straight lines passing through the origin.
Thus $\partial\Gamma$ intersects the curves $R=0$ and $S=0$ 
in two sets of $2n$ points placed in an alternating fashion around the circle.
(See Figure \ref{fig:GaussCurves}.)

\begin{figure}[ht]
  \captionsetup[subfloat]{width=26ex}
  \subfloat[width=25ex][The curves $R=0$ and $S=0$ outside of a sufficiently large disk $\Gamma$.]{\includegraphics[height=25ex]{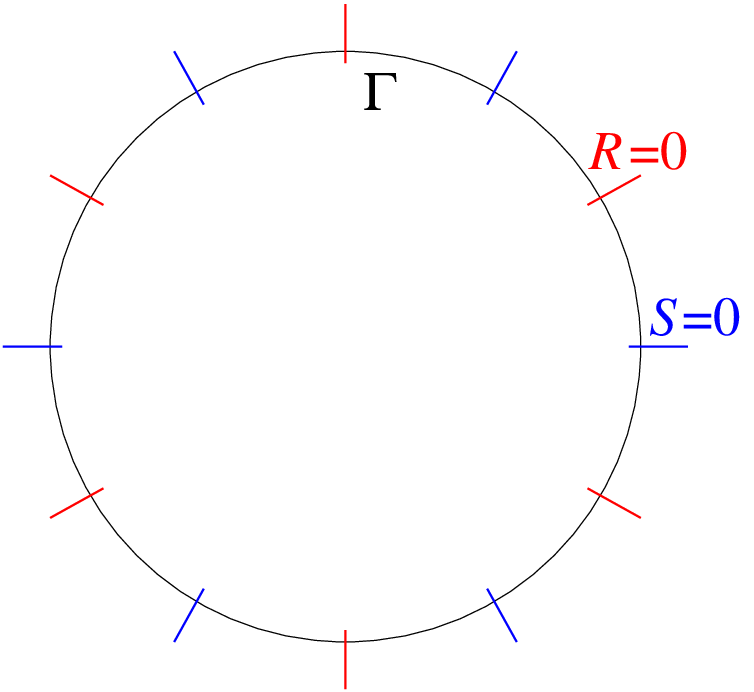}}
  \hfill
  \captionsetup[subfloat]{width=22ex}
  \subfloat[Joining the ends inside of $\Gamma$ forces the curves to intersect.]{\includegraphics[height=25ex]{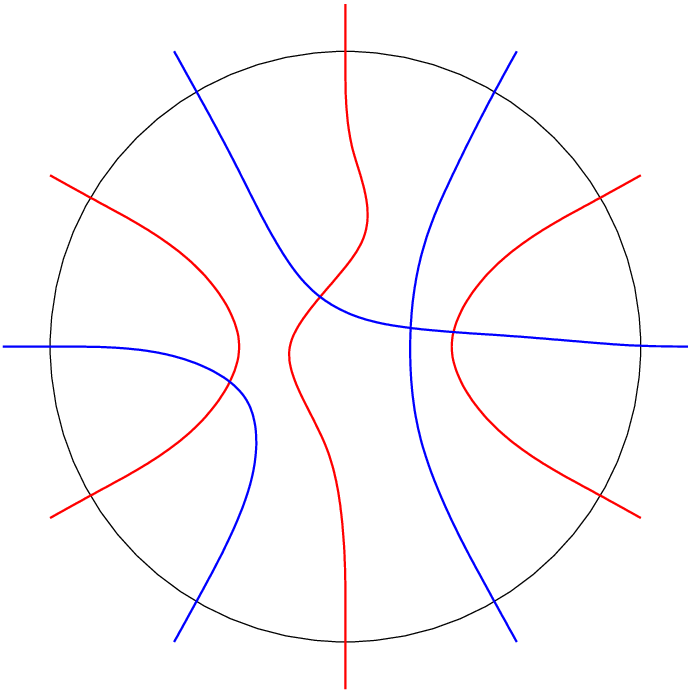}}
  \hfill
  \subfloat[Such pathological cases have to be ruled out, of course.]{\includegraphics[height=25ex]{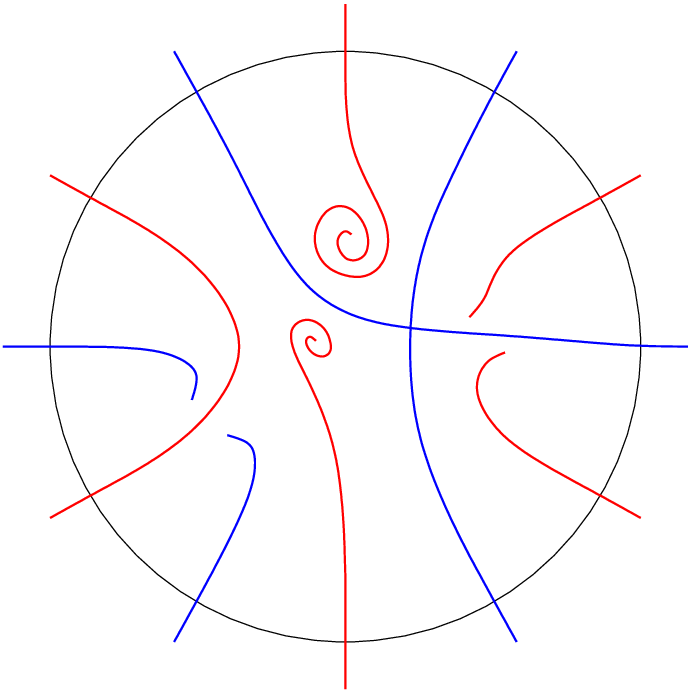}}
  \caption{Gauss' geometric argument for the existence of zeros}
  \label{fig:GaussCurves}
\end{figure}

Prolonging these curves into the interior of $\Gamma$, Gauss concludes 
that the curves $R=0$ and $S=0$ must intersect somewhere inside the disk $\Gamma$.  
The conclusion relies on certain (intuitively plausible)
assumptions, which Gauss clearly states but does not prove:
\begin{quote} 
  ``It seems to have been proved with sufficient certainty that 
  an algebraic curve can neither suddenly break off anywhere 
  (as it happens, for example, with the transcendental curve whose equation
  is $y = 1/\log x$) nor lose itself, so to say, in some point 
  after infinitely many coils (like the logarithmic spiral).  
  As far as I know, nobody has raised any doubts about this. 
  Should someone demand it, however, then I will undertake 
  to give a proof that is not subject to any doubt, on some other occasion.''%
  \footnote{ 
    ``Satis bene certe demonstratum esse videtur, 
    curvam algebraicam neque alicubi subito abrumpi posse 
    (uti e.g.\ evenit in curva transscendente, cuius aequatio $y = 1/\log x$), 
    neque post spiras infinitas in aliquo puncto se quasi perdere 
    (ut spiralis logarithmica), quantumque scio nemo dubium contra hanc rem movit. 
    Attamen si quis postulat, demonstrationem nullis dubiis obnoxiam 
    alia occasione tradere suscipiam.'' \cite[Bd.\,3, p.\,27]{Gauss:Werke}
    \selectlanguage{english} 
    My translation is adapted from Prof.\ Ernest Fandreyer's 
    (Fitchburg State College Library, Manuscript Collections),
    cf.\ van der Waerden \cite[p.\,96]{vanDerWaerden:1985}. }
\end{quote}



This amounts to a version of the Jordan Curve Theorem \cite{GerstenStallings:1988}.
By modern standards Gauss' geometric argument is thus incomplete.
The unproven assertions are indeed correct, and were rigorously worked out 
by Ostrowski \cite{Ostrowski:1920,Ostrowski:1933} more than a century later.
Gauss' ingenious insight was to apply geometric arguments to an algebraic problem.
In terms of winding numbers he shows $\cind{\partial\Gamma}{F} = n$ by an implicit homotopy $F \sim Z^n$.
Our development shows how to complete the proof using real-algebraic techniques. 

\begin{Annotation}
  \textbf{(How serious is this gap?)}
  Gersten and Stallings [\textit{On Gauss's first proof of the fundamental theorem of algebra}, 
    Proc.\ Amer.\ Math.\ Soc.\ 103 (1988), 331--332]:
  ``It is remarkable that Gauss himself was not able to fill in the details, 
  except to give a plausibility argument at the point in the proof where the free group enters; 
  the deep point in this argument involves the geometry of the 2-cell, 
  especially a version of the Jordan Curve Theorem which is used in the geometric theory of free groups.''
  Smale [\textit{The fundamental theorem of algebra and complexity theory}, Bull.\ Amer.\ Math.\ Soc.\ 4 (1981), 1--36]:
  ``I wish to point out what an immense gap Gauss' proof contained. 
  It is a subtle point even today that a real algebraic plane curve cannot enter a disk without leaving. 
  In fact even though Gauss redid this proof 50 years later, the gap remained. 
  It was not until 1920 that Gauss' proof was completed.''
  Ostrowksi's arguments essentially use compactness to prove existence and are not constructive.
  For the real-algebraic proof the intermediate value property of polynomials suffices;
  moreover, it makes the proof constructive and provides computational tools.
\end{Annotation}

Gauss gave two further proofs in 1816;
the second proof is algebraic (\sref{ssub:Algebra}), whereas
the third proof uses integration (\sref{ssub:Topology}) and 
foreshadows Cauchy's integral formula for the winding number.
Gauss' fourth proof in 1849 is essentially an improved 
version of his first proof \cite[chap.\,5]{vanDerWaerden:1985}.
When Gauss published it for his doctorate jubilee, 
the works of Sturm (1835) and Cauchy (1837) had been known for several years.
In particular Sturm's theorem had immediately risen 
to international acclaim, and was certainly familiar to Gauss.  
Gauss could have taken up his first proof and 
completed it by arguments similar to the ones presented here.
Completing Gauss' geometric argument, Ostrowski \cite{Ostrowski:1933} mentions 
the relationship with the Cauchy index but builds his proof on topological arguments.

\begin{Annotation}
  \textbf{(Historical sources)}
  In 1831 Gauss discussed Fourier's work \textit{Analyse des \'equations d\'etermin\'ees} 
  on counting and locating real roots (Werke, Band 3, pp.\,119--121).
  Kronecker attributes a certain example of Sturm chains to Gauss in 1849
  in his course \textit{Theorie der algebraischen Gleichungen} (1872),
  notes written by Kurt Hensel, archived at the University of Strasbourg,
  available at \link{num-scd-ulp.u-strasbg.fr/429}, page 165.
  Unfortunately I could not find this example or similar ones in Gauss' collected works.
  A more detailed research would be necessary to clarify
  the reception of the ideas of Sturm and Cauchy in Gauss' writings.
\end{Annotation}

\subsection{Cauchy, Sturm, Liouville} \label{sub:CauchySturmLiouville}

Argand in 1814 and Cauchy in 1820 proved the Fundamental Theorem of Algebra
by assuming the existence of a global minimum $z_0$ of $\abs{F}$ and 
a local argument to show that $F(z_0)=0$; see \sref{ssub:Analysis}.
While the local analysis is rigorous, the existence of a minimum
requires some compactness argument, which was yet to be developed;
see Remmert \cite[\textsection 1.8]{Remmert:1991}.

Sturm's theorem for counting real roots 
was announced in 1829 \cite{Sturm:1829} 
and published in 1835 \cite{Sturm:1835}.  
It was immediately assimilated by Cauchy in his residue 
calculus \cite{Cauchy:1831}, based on contour integration, 
which was published in 1831 during his exile in Turin.
In 1837 he published a more detailed exposition \cite{Cauchy:1837} 
with analytic-geometric proofs, and explicitly recognizes the relation 
to Sturm's theorem and algebraic computations. 

In the intervening years, Sturm and Liouville \cite{SturmLiouville:1836,Sturm:1836}
had elaborated their algebraic version of Cauchy's theorem, which they published in 1836.
(Loria \cite{Loria:1938} and Sinaceur \cite[I.VI]{Sinaceur:1991}
examine the interaction between Sturm, Liouville, and Cauchy in detail.)
As opposed to Cauchy, their arguments are based on 
what they call the ``first principles of algebra''.
In the terminology of their time this means the theory of complex numbers,
including trigonometric coordinates $z = r(\cos\theta + i \sin\theta)$ and de Moivre's formula, but excluding integration.
They use the intermediate value property of real polynomials as well as tacit compactness arguments.

\subsection{Sturm's algebraic vision} \label{sub:SturmsAlgebraicVision}

Sturm, in his article \cite{Sturm:1836} continuing his work with Liouville \cite{SturmLiouville:1836}, 
presents arguments which closely parallel our real-algebraic proof: 
the argument principle (Prop.\,1, p.\,294), multiplicativity (Prop.\,2, p.\,295), 
counting roots of a split polynomial within a given region (Prop.\,3, p.\,297), 
the winding number in the absence of zeros (Prop.\,4, p.\,297), and finally Cauchy's theorem (p.\,299).
One crucial step is to show that $\cind{\partial\Gamma}{F} = 0$ when $F$ does not vanish in $\Gamma$. 
This is solved by subdivision and a tacit compactness argument (pp.\,298--299); 
our compactness proof of Lemma \ref{lem:GlobalIndexZero} makes this explicit and completes his argument.
Sturm then deduces the Fundamental Theorem of Algebra (pp.\,300--302)
and expounds on the practical computation of the Cauchy index $\cind{\partial\Gamma}{F}$
using Sturm chains as in the real case (pp.\,303--308). 

Sturm's exposition strives for algebraic simplicity, but 
his proofs are still based on geometric and analytic arguments.
It is only on the final pages that Sturm employs his 
algebraic method for computing the Cauchy index.
This mixed state of affairs has been passed on ever since, even though 
it is far less satisfactory than Sturm's purely algebraic treatment of the real case \cite{Sturm:1835}.
Our proof shows that Sturm's algebraic vision of the complex case can be salvaged 
and his arguments can be put on firm real-algebraic ground.

We note that Sturm and Liouville \cite{SturmLiouville:1836} explicitly exclude zeros on the boundary:
\begin{quote} 
  ``We formally exclude, however, the case where for some point 
  of the curve we have simultaneously $P=0$ and $Q=0$:
  this special case does not enjoy any regular property 
  and cannot give rise to any theorem.''%
  \footnote{ \selectlanguage{french} 
    ``Toutefois nous excluons formellement le cas particulier o\`u, 
    pour quelque point de la courbe $ABC$, on aurait \`a la fois $P=0$, $Q=0$: 
    ce cas particulier ne jouit d'aucune propri\'et\'e r\'eguli\`ere 
    et ne peut donner lieu \`a aucun th\'eor\`eme.''
    \cite[p.\,288]{SturmLiouville:1836} }
\end{quote}
This seems overly pessimistic in view of our Theorem \ref{thm:RootCounting} above.
In his continuation \cite{Sturm:1836}, Sturm formulates the same problem more cautiously:
\begin{quote}
  ``It is under this hypothesis that we have proven the theorem of Mr.\ Cauchy; 
  the necessary modifications in the case where roots were on the contour 
  would require a long and meticulous discussion, which we wanted 
  to avoid by neglecting this special case.''%
  \footnote{ \selectlanguage{french}
    ``C'est en admettant cette hypoth\`ese que nous avons d\'emontr\'e le 
    th\'eor\`eme de M.\ Cauchy; les modifications qu'il faudrait y apporter
    dans le cas o\`u il aurait des racines sur le contour m\^eme $ABC$,
    exigeraient une discussion longue et minutieuse que nous avons voulu
    \'eviter en faisant abstraction de ce cas particulier.''
    \cite[p.\,306]{Sturm:1836} }
\end{quote}
It seems safe to say that our detailed discussion is just as 
``long and meticulous'' as the usual development of Sturm's theorem.
Modulo these details, the cited works of Gauss, Cauchy, and Sturm 
contain the essential ideas for the real-algebraic approach.  

\begin{Annotation}
  To this end our presentation refines the techniques in several ways:
  \begin{itemize}
  \item
    We purge all arguments of transcendental functions 
    and compactness assumptions.  This simplifies 
    the proof and generalizes it to real closed fields.
  \item
    The product formula (\sref{sub:ProductFormula})
    and homotopy invariance (\sref{sub:HomotopyInvariance})
    streamline the proof and avoid tedious calculations.
  \item
    The uniform treatment of boundary points extends 
    Sturm's theorem to piecewise polynomial functions
    and leads to straightforward algorithms.
  \end{itemize}
\end{Annotation}

\subsection{Further development in the 19th century} \label{sub:FurtherDevelopment}


Sturm's theorem was a decisive step in the development 
of algebra as an autonomous field; see Sinaceur \cite{Sinaceur:1991}.
Algebraic generalizations to higher dimensions were conjectured 
by Sylvester in 1840 and developped by Hermite from 1852 onwards.
In 1869 Kronecker \cite{Kronecker:1869} turned from algebra to integration
in order to construct his higher-dimensional index (also called Kronecker characteristic).
Subsequent work was likewise built on analytic or topological methods over $\RR$:
one gains in generality by extending the index to smooth or continuous functions, 
but one loses algebraic computability and the elementary setting of real closed fields.



\subsubsection{Applications}

Generalizing Example \ref{example:Signature}, the problem of \emph{stability of motion} 
led Routh \cite{Routh:1878} in 1878 and Hurwitz \cite{Hurwitz:1895} in 1895 to count, 
for a given polynomial, the number of complex roots having negative real part. 
With the celebrated Routh--Hurwitz theorem, the algebraic Cauchy index 
has transited from algebra to application, where it survives to the present day.


\subsubsection{Encyclopaedic surveys}

In the 1898 \emph{Encyklop\"adie der mathematischen Wissenschaften} \cite{EncyklMathWiss}, 
Netto's survey on the Fundamental Theorem of Algebra (Kap.\,IB1, \textsection a7) 
mentions Cauchy's algebraic approach only briefly (p.\,236), whereas
Runge's article on approximation of complex roots (Kap.\,IB3, \textsection a6)
discusses the Sturm--Cauchy method in detail (pp.\,418--422). 
In the 1907 \emph{Encyclop\'edie des Sciences Math\'ematiques} \cite{EncyclSciMath},
Netto and le Vavasseur give an overview of nearly 100 published proofs
(tome I, vol.\,2, chap.\,I-9, \textsection 80--88), 
including Cauchy's argument principle (\textsection 87).
The work of Sturm--Liouville \cite{SturmLiouville:1836,Sturm:1836} 
is listed but the algebraic approach via Sturm chains is not mentioned.


\subsubsection{Nineteenth-century textbooks}

While Sturm's theorem made its way to modern algebra textbooks, the 
algebraic approach to the complex case seems to have been lost on the way.
I will illustrate this by two prominent and perhaps representative textbooks.

In his 1866 textbook \textit{Cours d'alg\`ebre sup\'erieure}, starting with the 3rd edition,
Serret \cite[pp.\,117--131]{Serret:1866} presents the proof of 
the Fundamental Theorem of Algebra following Cauchy and Sturm--Liouville, 
with only minor modifications.  

In his 1898 textbook \emph{Lehrbuch der Algebra}, 
Weber \cite{Weber:1898} devotes over 100 pages to real-algebraic equations,
where he presents Sturm's theorem in great detail (\textsection 91--106).  
Calling upon Kronecker's index theory (\textsection 100--102),
he sketches how to count complex roots (\textsection 103--104).
Quite surprisingly, he uses only $\Ind\bigl(\frac{P'}{P})$ 
and Corollary \ref{cor:CountingRealRoots} where the general case 
$\Ind\bigl(\frac{R}{S})$ and Theorem \ref{thm:Sturm} would have been optimal.
Here Cauchy's algebraic method \cite{Cauchy:1837}, apparently  unknown to Weber,
had gone much further concerning explicit formulae and concrete computations.

\subsection{Survey of proof strategies} \label{sub:ProofStrategies}

Since the time of Gauss numerous proofs of the
Fundamental Theorem of Algebra have been developed.
We refer to Remmert \cite{Remmert:1991} for a concise overview and 
to Fine--Rosenberger \cite{FineRosenberger:1997} for a textbook presentation.
As mentioned in \sref{sub:HistoricalOrigins},
the proof strategies can be grouped into three families:

\subsubsection{Analysis} \label{ssub:Analysis}

Early proofs in this family are based on the existence of a global minimum 
$z_0$ of $\abs{F}$ and some local argument from complex analysis showing that $F(z_0)=0$
(d'Alembert 1746, Argand 1814, Cauchy 1820). 
See Remmert \cite[\textsection 2]{Remmert:1991} for a presentation in its historical context,
or Rudin \cite[chap.\,8]{Rudin:1976} in the context of a modern analysis course.
The most succinct formulation follows from Liouville's theorem for entire functions.

These existence proofs are in general not constructive and do not indicate the location of zeros.
For a discussion of constructive refinements see \cite[\textsection 2.5]{Remmert:1991}.

\subsubsection{Algebra} \label{ssub:Algebra}

Proofs in this family use the fundamental theorem of symmetric polynomials 
in order to reduce the problem from real polynomials 
of degree $2^k m$ with $m$ odd to degree $2^{k-1} m'$ with $m'$ odd
(Euler 1749, Lagrange 1772, Laplace 1795, Gauss 1816; see \cite[appendix]{Remmert:1991}).
The argument can also be reformulated using Galois theory;
see Cohn \cite[Thm.\,8.8.7]{Cohn:2003},
Jacobson \cite[Thm.\,5.2]{Jacobson:1985:1989},
or Lang \cite[\textsection VI.2, Ex.\,5]{Lang:2002}.
The induction is based, for $k=0$, on real polynomials of odd degree,
where the existence of at least one real root 
is guaranteed by the intermediate value theorem.

This algebraic proof works over every real closed field,
as elaborated by Artin and Schreier \cite{ArtinSchreier:1926} in 1926.
It is constructive but ill-suited to actual computations.

\subsubsection{Topology} \label{ssub:Topology}

Proofs in this family use some form of the winding number 
$\Wind(\gamma)$ of closed paths $\gamma \colon [0,1] \to \CC^*$ 
(Gauss 1799/1816, Cauchy 1831/37, Sturm--Liouville 1836).
The winding number appears in various guises;
see Remark \ref{rem:WindingNumberConstruction}.
In each case the difficulty is a rigorous construction 
and to establish its characteristic properties: 
normalization, multiplicativity and homotopy invariance, 
as stated in Theorem \ref{thm:AlgebraicWindingNumber}.

Our proof belongs to this last family.  Unlike previous proofs, however,
we do not base the winding number on analytic or topological arguments,
but on real algebra.

\subsection{Constructive and algorithmic aspects} \label{sub:ConstructiveAspects}

Sturm's method is eminently practical, by the standards of 19th 
century mathematics as for modern-day implementations.
As early as 1840 Sylvester \cite{Sylvester:1840} wrote
``Through the well-known ingenuity and proferred help of a distinguished friend,
I trust to be able to get a machine made for working Sturm's theorem (\dots)''.
It seems, however, that such a machine was never built.
Calculating machines had been devised by Pascal, Leibniz, and Babbage;
the latter was Lucasian Professor of Mathematics when Sylvester studied at Cambridge in the 1830s.
The idea of computing machinery seems to have been popular among mid-19th century mathematicians.
For example, in a small note of 1846, Ullherr \cite{Ullherr:1846} remarks that the argument principle 
``provides a method to find the roots of higher-degree equations by means of a mechanical apparatus.''%
\footnote{ \selectlanguage{german}
  ``Die bei dem ersten Beweise gebrauchte Betrachtungsart 
  giebt ein Mittel an die Hand, die Wurzeln der h\"oheren Gleichungen 
  mittels eines Apparates mechanisch zu finden.'' 
  \cite[p.\,234]{Ullherr:1846} } 

For separating and approximating roots, the state of the art at the end of 
the 19th century was surveyed in Runge's \emph{Encyklop\"adie} article 
\cite[Kap.\,IB3, \textsection a]{EncyklMathWiss}, and in particular
the Sturm--Cauchy method is discussed in detail (pp.\,416--422). 


In 1924 Weyl \cite{Weyl:1924} reemphasized that the analytic winding number 
can be used to find and approximate the roots of $F$. 
In this vein Weyl formulated his constructive proof of the Fundamental
Theorem of Algebra, which indeed translates to an algorithm:
a careful numerical approximation can be used to calculate 
the integer $\cind{\partial\Gamma}{F}$; see Henrici \cite[\textsection 6.11]{Henrici:1974}. 
While Weyl's motivation may have been philosophical,
it is the practical aspect that has proven most successful.
Variants of Weyl's algorithm are used in modern computer 
implementations for finding approximate roots, and 
are among the asymptotically fastest known algorithms.
The question of algorithmic complexity was pursued 
by Sch\"onhage \cite{Schoenhage:1986} and others since the 1980s.
See Pan \cite{Pan:1998} for an overview.

The fact that Sturm's and Cauchy's theorems can be combined 
to count complex roots seems not to be as widely known as it should be.
It is surprising that the original publications in the 1830s 
did not have a lasting effect (\sref{sub:FurtherDevelopment}) and 
likewise Runge's presentation 
in the 1898 \emph{Encyklop\"adie} fell into oblivion.
In the 1969 Proceedings \cite{ConstructiveAspects:1969} on constructive aspects 
of the Fundamental Theorem of Algebra, the Sturm--Cauchy method is not mentioned.
It reappears in 1978 in a small note by Wilf \cite{Wilf:1978},
and is briefly mentioned in Sch\"onhage's report \cite[p.\,5]{Schoenhage:1982}.
Most often the computer algebra literature credits Weyl for the analytic-numeric method, 
and Lehmer or Wilf for the algebraic-numeric method, but not Cauchy or Sturm.  
Their real-algebraic method for complex root location seems largely ignored.




\begin{Appendix}

\appendix

\section{The Routh--Hurwitz stability theorem} \label{sec:Stability}

For a polynomial with only real roots, as in Example \ref{example:Signature},
Descartes' rule of signs 
quickly computes the number of negative resp.\ positive roots.
More generally, in certain applications it is important to determine, 
for a given complex polynomial $F \in \CC[Z]$, how many roots lie 
in the left half-plane $\{\, z \in \CC \mid \re(z) < 0 \,\}$.
This question originated from the theory of dynamical systems and 
the problem of \emph{stability of motion}: 

\begin{example}
  Let $A \in \RR^{n \times n}$ be a square matrix with real coefficients.
  The differential equation $y' = A y$ with initial value $y(0) = y_0$
  has a unique solution, given by $\exp(t A) y_0$.  
  In terms of dynamical systems, the origin $0$ is a fixed point; 
  it is \emph{stable} if all eigenvalues $\lambda_1,\dots,\lambda_n \in \CC$ 
  of $A$ satisfy $\re \lambda_k < 0$: 
  in this case $\exp(t A)$ has eigenvalues $\exp(t \lambda_k)$ of absolute value $<1$,
  whence $\exp(t A) \to 0$ for $t \to +\infty$.
\end{example}

\begin{example}
  The foregoing argument holds locally around fixed points 
  of any dynamical system given by a differential equation $y' = \Phi(y)$ 
  where $\Phi \colon \RR^n \to \RR^n$ is continuously differentiable.  
  Suppose that $a$ is a fixed point, i.e., $\Phi(a) = 0$.
  It is \emph{stable} if all eigenvalues of the matrix $A = \Phi'(a) \in \RR^{n \times n}$ 
  have negative real part: in this case there exists a neighbourhood $V$ of $a$ 
  that is attracted to $a$: every trajectory $f \colon \RR_{\ge0} \to \RR^n$, 
  starting at $f(0) \in V$ and satisfying $f'(t) = \Phi(f(t))$ for all $t \ge 0$, 
  satisfies $f(t) \to a$ for $t \to +\infty$.
\end{example}

In this sense, stability means that trajectories are robust under small perturbations.

Given $F \in \CC[Z]$ we can determine the number of roots with positive 
real part simply by calculating $\cind{\partial\Gamma}{F}$ with respect to
a rectangle $\Gamma = [0,r] \times [-r,r]$ for $r$ sufficiently large.
(One could use the Cauchy radius $\rho_F$ defined in \sref{sub:GlobalWindingNumber}.)
Routh's theorem, however, offers a simpler solution by 
calculating the Cauchy index along the imaginary axis.
This is usually proven using contour integration, but
here we will give a real-algebraic proof.  As before we consider
a real closed field $\R$ and its extension $\C = \R[i]$ with $i^2 = -1$.

\begin{definition} \label{def:RouthIndex}
  For every polynomial $F \in \C[Z]^*$ we define its \emph{Routh index} as 
  \begin{equation} \label{eq:RouthIndex}
    \textstyle
    \Routh(F) := 
    \Ind_{+r}^{-r}\bigl( \frac{\re F(iY)}{\im F(iY)} \bigr)
    + \Ind_{-1/r}^{+1/r}\bigl( \frac{\re F(i/Y)}{\im F(i/Y)} \bigr) 
  \end{equation}
  for some arbitrary parameter $r \in \R_{>0}$;
  the result is independent of $r$ 
  by Proposition \ref{prop:RealIndexProperties}(b).
\end{definition}

\begin{remark} \label{rem:RouthIndexSimplified}
  We can decompose $F(iY) = R + i S$ with $R,S \in \R[Y]$ and
  compare the degrees $m = \deg S$ and $n = \deg R$.  If $m \ge n$, then 
  the fraction $\frac{R(1/Y)}{S(1/Y)} = \frac{Y^m R(1/Y)}{Y^m S(1/Y)}$
  has no pole at $0$, so the second index vanishes for $r$ sufficiently large,
  and Equation \eqref{eq:RouthIndex} simplifies to 
  \begin{equation} \label{eq:RouthIndexSimplified}
    \textstyle
    \Routh(F) = -\Ind_{-\infty}^{+\infty}\bigl( \frac{\re F(iY)}{\im F(iY)} \bigr) .
  \end{equation}
\end{remark}

\begin{example}
  In general the second index in Equation \eqref{eq:RouthIndex} cannot be neglected,
  as illustrated by $F = (Z-1)(Z-2)$: here $F(iY) = -Y^2 - 3i Y + 2$, whence 
  \[
  \textstyle
  \frac{\re F(iY)}{\im F(iY)} = \frac{Y^2 - 2}{3Y}
  \quad\text{and}\quad 
  \frac{\re F(i/Y)}{\im F(i/Y)} = \frac{1 - 2Y^2}{3Y} .
  \]
  Both indices in Equation \eqref{eq:RouthIndex}
  contribute $+1$ such that $\Routh(F) = +2$.
\end{example}

\begin{lemma}
  We have $\Routh(Z-z_0) = \sign( \re z_0 )$ for all $z_0 \in \C$.
\end{lemma}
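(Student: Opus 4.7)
The plan is to reduce everything to a direct computation, since $F = Z - z_0$ is linear. Write $z_0 = a + bi$ with $a,b \in \R$, so that $F(iY) = iY - a - bi = -a + i(Y-b)$. Consequently $\re F(iY) = -a$ (a constant in $\R[Y]$) and $\im F(iY) = Y-b$ (linear), yielding the central fraction
\[
\textstyle \frac{\re F(iY)}{\im F(iY)} = \frac{-a}{Y-b}.
\]

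First I dispose of the trivial case $a = 0$: the fraction is identically zero, both Cauchy indices appearing in \eqref{eq:RouthIndex} vanish by the degenerate clause of Definition \ref{def:CauchyIndex}, and therefore $\Routh(F) = 0 = \sign(0)$ as required.

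For $a \ne 0$ the degree condition $\deg\im F(iY) = 1 \ge 0 = \deg\re F(iY)$ of Remark \ref{rem:RouthIndexSimplified} holds, so the correction at infinity drops out and
\[
\textstyle \Routh(F) \;=\; -\Ind_{-\infty}^{+\infty}\bigl(\tfrac{-a}{Y-b}\bigr).
\]
The rational function $\tfrac{-a}{Y-b}$ has a unique pole, located at $Y=b$. A one-sided inspection of limits shows that if $a>0$ it jumps from $+\infty$ to $-\infty$ (local contribution $-1$), while if $a<0$ it jumps the other way (contribution $+1$). In either sub-case $\Ind_{-\infty}^{+\infty}\bigl(\tfrac{-a}{Y-b}\bigr) = -\sign(a)$, and hence $\Routh(F) = \sign(a) = \sign(\re z_0)$.

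There is no genuine obstacle: the statement is the linear base case of Routh's theorem and amounts to evaluating one Cauchy index with a single simple pole. The only point worth verifying is that Remark \ref{rem:RouthIndexSimplified} actually applies in the shape just used. If one preferred to stay with the unabridged Definition \ref{def:RouthIndex}, it suffices to observe that $\tfrac{\re F(i/Y)}{\im F(i/Y)} = \tfrac{-aY}{1-bY}$ is regular at $Y=0$, so the second summand in \eqref{eq:RouthIndex} contributes $0$ for every sufficiently large $r$, and the computation concludes in the same way.
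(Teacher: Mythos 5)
Your proof is correct and follows essentially the same route as the paper's: write $F(iY) = R + iS$ with $R = -\re z_0$, $S = Y - \im z_0$, invoke Remark \ref{rem:RouthIndexSimplified} to drop the term at infinity, and evaluate the single-pole Cauchy index. You are merely more verbose in separating out the degenerate case $\re z_0 = 0$ and in spelling out the one-sided limits, both of which the paper compresses into a single line.
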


\begin{proof}
  For $F = Z-z_0$ we find $F(iY) = R + i S$ with $R = - \re z_0$ and $S = Y - \im z_0$.  
  Thus $\Routh(F) = - \Ind_{-\infty}^{+\infty}\bigl( \frac{R}{S} ) =
  \Ind_{-\infty}^{+\infty}\bigl( \frac{\re z_0}{Y - \im z_0} ) = \sign( \re z_0 )$.
\end{proof}

\begin{lemma}
  We have $\Routh( F G ) = \Routh( F ) + \Routh( G )$ for all $F,G \in \C[Z]^*$.
\end{lemma}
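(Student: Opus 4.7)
The plan is to reduce the claim to the real product formula (Theorem~\ref{thm:RealProductFormula}), applied term by term to each of the two Cauchy indices in Definition~\ref{def:RouthIndex}, and then verify that the boundary correction terms cancel between the two pieces. I would write $F(iY) = P + iQ$ and $G(iY) = R + iS$ with $P,Q,R,S \in \R[Y]$, so that $(FG)(iY) = (PR - QS) + i(PS + QR)$. Setting $n := \deg F$ and $\tilde P(Y) := Y^n P(1/Y)$, and analogously $\tilde Q, \tilde R, \tilde S$ (with the corresponding power $Y^{\deg G}$ for $R,S$), the ratio $\re(FG)(i/Y)/\im(FG)(i/Y)$ rewrites as $(\tilde P\tilde R - \tilde Q\tilde S)/(\tilde P\tilde S + \tilde Q\tilde R)$, so the same product formula applies equally well to the inversion piece.

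For $r$ sufficiently large, Theorem~\ref{thm:RealProductFormula} on $[-r,+r]$ and on $[-1/r,+1/r]$ yields two identities. Summing them and regrouping via the definition of $\Routh$ leaves
\begin{equation*}
\Routh(FG) \;=\; \Routh(F) + \Routh(G) \;-\; \Bigl[ V_{+r}^{-r}\bigl(1, \tfrac{P}{Q} + \tfrac{R}{S}\bigr) + V_{-1/r}^{+1/r}\bigl(1, \tfrac{\tilde P}{\tilde Q} + \tfrac{\tilde R}{\tilde S}\bigr) \Bigr].
\end{equation*}
The crux is the cancellation of the bracketed corrections. As rational functions we have $\tilde P/\tilde Q = P(1/Y)/Q(1/Y)$, so $\tilde P/\tilde Q$ evaluated at $Y = \pm 1/r$ agrees with $P/Q$ evaluated at $Y = \pm r$, and likewise for the $R,S$ fraction. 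Therefore $V_{\pm 1/r}(1, \tilde P/\tilde Q + \tilde R/\tilde S) = V_{\pm r}(1, P/Q + R/S)$, and the convention $V_a^b = V_a - V_b$ forces the two $V$-terms to be exact opposites, so they sum to zero. This yields $\Routh(FG) = \Routh(F) + \Routh(G)$.

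The main obstacle is the handling of degenerate configurations where some of $P, Q, R, S$ vanish identically (e.g.\ when $F(iY)$ is purely real or purely imaginary), or where $F$ or $G$ has a root on the imaginary axis or at infinity (so that $Q, S$, or their inverted counterparts vanish at a point). These pathologies are already accommodated by the degenerate cases of Theorem~\ref{thm:RealProductFormula} (which reduce to the inversion formula or hold trivially) and by the generous half-counting conventions of our Cauchy index. Choosing $r$ large enough that the finitely many poles and zeros of the relevant fractions all lie strictly inside $(-r,-1/r) \cup (1/r,+r)$ avoids any ambiguity at the endpoints, and the real product formula then applies cleanly on both intervals.
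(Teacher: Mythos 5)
Your argument is correct and is exactly what the paper's one-line proof (``This follows from the real product formula stated in Theorem~\ref{thm:RealProductFormula}.'') has in mind: apply the product formula to each of the two Cauchy indices in Definition~\ref{def:RouthIndex} and observe that the boundary correction terms $V_{+r}^{-r}$ and $V_{-1/r}^{+1/r}$ cancel because the arguments agree under $Y \mapsto 1/Y$. Your final paragraph is being more cautious than necessary --- the half-counting conventions already make the product formula hold verbatim at boundary poles or zeros, and $\Routh$ is $r$-independent anyway --- but this does no harm.
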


\begin{proof}
  This follows from the product formula \eqref{eq:RealProductFormula}
  as in Corollary \ref{cor:Multiplicativity}.
\end{proof}

\begin{remark}
  For every $c \in \C^*$ we have $\Routh(c) = 0$, whence $\Routh( c F ) = \Routh( F )$.
  We can thus ensure the favourable situation of Remark \ref{rem:RouthIndexSimplified}:
  if $\deg S < \deg R$, then it is advantageous to pass 
  from $F$ to $i F$, that is, to replace $(R,S)$ by $(-S,R)$.
\end{remark}

We can now deduce the following formulation of the famous Routh--Hurwitz theorem:

\begin{theorem} \label{thm:Routh}
  The Routh index of every polynomial $F \in \C[Z]^*$ satisfies $Routh(F) = p - q$
  where $p$ resp.\ $q$ is the number of roots of $F$ in $\C$ having positive resp.\ negative real part.
\end{theorem}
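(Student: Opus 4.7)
The plan is to combine the two lemmas immediately preceding the theorem with the Fundamental Theorem of Algebra established in Section \ref{sec:FTA}. Since $\R$ is real closed, Theorem \ref{thm:LargeRectangle} (together with Theorem \ref{thm:CountingComplexRoots}) guarantees that every non-zero $F \in \C[Z]$ splits over $\C$, so I would begin by writing
\begin{equation*}
F = c \prod_{k=1}^{n}(Z - z_k),
\end{equation*}
with $c \in \C^*$, $n = \deg F$, and the $z_k \in \C$ listed with multiplicity. This reduces everything to understanding constants and linear factors.

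Second, I would apply the multiplicativity lemma inductively to obtain
\begin{equation*}
\Routh(F) \;=\; \Routh(c) \;+\; \sum_{k=1}^{n} \Routh(Z - z_k).
\end{equation*}
For the constant $c$, both rational functions in Definition \ref{def:RouthIndex} reduce to constants of the form $\re(c)/\im(c)$, hence have no poles, so $\Routh(c) = 0$. The preceding lemma evaluates each linear factor as $\Routh(Z - z_k) = \sign(\re z_k)$. Partitioning the roots into those with $\re z_k > 0$ (contributing $+1$ each, $p$ in total), those with $\re z_k < 0$ (contributing $-1$ each, $q$ in total), and those on the imaginary axis (contributing $0$), the sum collapses to $p - q$, which is the desired conclusion.

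The main point to verify carefully will be that the multiplicativity lemma applies in the present generality, including the case where $F$ has roots on the imaginary axis: in that case $\re F(iY)$ and $\im F(iY)$ share a common real zero, so the fraction $\frac{\re F(iY)}{\im F(iY)}$ is not in lowest terms. However, the Cauchy index and the real product formula of Theorem \ref{thm:RealProductFormula} are stated for arbitrary $P,Q,R,S \in \R[X]^*$ (the degenerate cases being absorbed into the definitions and the accompanying remark), so no further argument is needed beyond invoking the multiplicativity lemma as stated. Once this routine check is made, the theorem follows from the three ingredients above in just a few lines, making it a clean corollary of the real-algebraic index machinery developed in the preceding sections.
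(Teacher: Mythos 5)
Your proof is correct and follows exactly the paper's argument: factor $F$ over $\C$ via the Fundamental Theorem of Algebra, then combine the multiplicativity lemma with the linear-factor evaluation $\Routh(Z-z_0)=\sign(\re z_0)$, noting $\Routh(c)=0$. The paper's own proof is just two sentences stating this; your extra paragraph verifying that the multiplicativity lemma tolerates roots on the imaginary axis is a reasonable sanity check but not a different approach.
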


\begin{proof}
  The Fundamental Theorem of Algebra ensures that 
  $F = c_0(Z-z_1) \cdots (Z-z_n)$ for some $c \in \C^*$ and $z_1,\dots,z_n \in \C$,
  so the Routh index follows from the preceding lemmas.
\end{proof}

\begin{remark}
  By a linear transformation $z \mapsto a z + b$, with $a \in \C^*$ and $b \in \C$,
  we can map the imaginary line onto any other straight line, 
  so we can apply the theorem to count roots in any half-space in $\C$.
  The transformation $z \mapsto \frac{z - 1}{z + 1}$
  maps $\R i \cup \{\infty\}$ onto the unit circle,
  and the right half-plane to the unit disk.
  Again by linear transformation we can thus apply 
  the theorem to count roots in any given disk in $\C$.
\end{remark}

Routh's criterion is often applied to real polynomials
$P \in \R[X]$, as in the motivating examples above,
which warrants the following more detailed formulation:

\begin{corollary} \label{cor:Routh}
  Consider $P = c_0 X^n + c_1 X^{n-1} + \dots + c_{n-1} X + c_n$ in $\R[X]$
  and denote by $p$ resp.\ $q$ the number of roots of $P$ in $\C$ having 
  positive resp.\ negative real part. Then 
  \begin{equation} \label{eq:RouthReal}
    p - q = \Routh(P) = \begin{cases}
      -\Ind_{-\infty}^{+\infty}\bigl( \frac{\re P(iY)}{\im P(iY)} \bigr)
      & \text{if $n$ is odd,} \\
      +\Ind_{-\infty}^{+\infty}\bigl( \frac{\im P(iY)}{\re P(iY)} \bigr)
      & \text{if $n$ is even.} 
    \end{cases}
  \end{equation} 
  Both cases can be subsumed into the unique formula
  \begin{equation} \label{eq:RouthRealUnified}
    q - p = \Ind_{-\infty}^{+\infty}\left( \frac{ c_1 X^{n-1} - c_3 X^{n-3} + \dots }{c_0 X^n - c_2 X^{n-2} + \dots } \right) .
  \end{equation}
  
  This implies Routh's criterion: All roots of $P$ have negative real part 
  if and only if $q=n$ and $p=0$, which is equivalent to saying that 
  the Cauchy index in \eqref{eq:RouthRealUnified} evaluates to $n$.
\end{corollary}

Routh's formulation via Cauchy indices is unrivaled in its simplicity, and 
can immediately be calculated using Sturm's theorem (\sref{sub:SturmTheorem}).
Hurwitz' formulation uses determinants, which has the advantage 
to produce explicit polynomial formulae in the given coefficients.
See 
Henrici \cite[\textsection 6.7]{Henrici:1974}, Marden \cite[chap.\,IX]{Marden:1966},
or Rahman--Schmeisser \cite[chap.\,11]{RahmanSchmeisser:2002}.


\section{Brouwer's fixed point theorem over real closed fields} \label{sec:Brouwer}

Brouwer's theorem states that every continuous map
$f \colon [0,1]^n \to [0,1]^n$ of a cube in $\RR^n$ to itself has a fixed point.
While in dimension $n=1$ this follows directly from the intermediate value theorem,
the statement in dimension $n\ge2$ is more difficult to prove: one employs either 
sophisticated machinery (differential topology, Stokes' theorem, co/homology)
or subtle combinatorial techniques (Sperner's lemma, Nash's game of Hex).
These proofs use Brouwer's mapping degree, in a more or less explicit way,
and the compactness of $[0,1]^n$.
Such proofs are often non-constructive and do not 
address the question of locating fixed points.
Using the algebraic winding number we can prove Brouwer's theorem, 
in dimension $n = 2$, in a constructive way over every real closed field.  
To this end, we have to restrict the statement from continuous to polynomial functions:


\begin{theorem} \label{thm:Brouwer}
  Let $\R$ be a real closed field and let $\Gamma = [-1,+1]^2$ in $\R^2$.
  Then for every polynomial map $f \colon \Gamma \to \Gamma$ 
  there exists $z \in \Gamma$ such that $f(z) = z$.
\end{theorem}

\begin{proof}
  We consider the homotopy $g_t = \id - t f$ from $g_0 = \id$ to $g_1 = \id - f$.
  For $z \in \partial\Gamma$ we have $g_t(z) = 0$ if and only if 
  $t = 1$ and $f(z) = z$; in this case the assertion holds.
  Otherwise, we have $g_t(z) \ne 0$ for all $z \in \partial\Gamma$ and $t \in [0,1]$.
  We can then apply homotopy invariance (Theorem \ref{thm:HomotopyInvariance})
  to conclude that $\cind{\partial\Gamma}{g_1} = \cind{\partial\Gamma}{g_0} = 1$.  
  Lemma \ref{lem:GlobalIndexZero} implies that there exists 
  $z \in \Int\Gamma$ such that $g_1(z) = 0$, whence $f(z) = z$.
\end{proof}

\begin{remark}
  As for the Fundamental Theorem of Algebra, the algebraic proof 
  of Theorem \ref{thm:Brouwer} also provides an algorithm 
  to approximate a fixed point to any desired precision
  (assuming $\R$ to be archimedean).
  Quadrisecting successively, we can construct a sequence of subsquares
  $\Gamma = \Gamma_0 \supset \Gamma_1 \supset \dots \supset \Gamma_k$
  such that $f$ has a fixed point on $\partial\Gamma_k$, 
  or $\cind{\partial\Gamma_k}{\id - f} \ne 0$.
  In the first case, a fixed point on the boundary $\partial\Gamma_k$ 
  is signalled during the computation of $\cind{\partial\Gamma_k}{\id - f}$
  and leads to a one-dimensional search problem. 
  In the second case, we continue the two-dimensional approximation.
\end{remark}

\begin{remark}
  Tarski's theorem says that all real closed fields share the same elementary theory (\sref{sub:ElementaryTheory}).
  This implies that the statement of Brouwer's fixed point theorem, for polynomial maps,
  extends from the real numbers $\RR$ to every real closed field $\R$: 
  as formulated above it is a first-order assertion in each degree.  
  It is remarkable that there exists a first-order proof over $\R$ 
  that is as direct as the usual second-order proof over $\RR$.
\end{remark}

\begin{remark}
  Over the field $\RR$ of real numbers the algebraic version implies the continuous version:
  Since $\Gamma \subset \RR^2$ is compact, the Stone-Weierstrass theorem 
  ensures that every continuous function $f \colon \Gamma \to \Gamma$ can be approximated 
  by polynomials $g_n \colon \Gamma \to \RR^2$, where $n=1,2,3,\dots$, such that $\abs{g_n-f} \le \frac{1}{n}$.
  The polynomials $f_n = \frac{n}{n+1} g_n$ satisfy $f_n(\Gamma) \subset \Gamma$ and $\abs{f_n-f} \le \frac{2}{n}$.
  For each $n$ there exists $z_n \in \Gamma$ such that $f_n(z_n) = z_n$ according to Theorem \ref{thm:Brouwer}.
  Again by compactness of $\Gamma$ we can extract a convergent subsequence.  Assuming $z_n \to z$, we find 
  \[ \abs{f(z)-z} \le \abs{f(z)-f(z_n)} + \abs{f(z_n)-f_n(z_n)} + \abs{z_n-z}\to 0 ,\] which proves $f(z) = z$.
\end{remark}

\end{Appendix}
  

\section*{Acknowledgments}

Many colleagues had the kindness to comment on successive versions of this article 
and to share their expertise on different facets of this venerable topic.
It is my heartfelt pleasure to thank Roland Bacher, Theo de Jong, Christoph Lamm, 
Bernard Parisse, Cody Roux, Marie-Fran\c{c}oise Roy, Siegfried Rump, Francis Sergeraert, and Duco van Straten. 
Numerous suggestions of referees and editors greatly helped to improve the exposition.


\bibliographystyle{monthly}
\bibliography{roots}

\providecommand{\bysame}{\leavevmode\hbox to3em{\hrulefill}\thinspace}
\providecommand{\MR}{\relax\ifhmode\unskip\space\fi MR }
\providecommand{\MRhref}[2]{%
  \href{http://www.ams.org/mathscinet-getitem?mr=#1}{#2}
}
\providecommand{\href}[2]{#2}
\begin{thebibliography}{10}

\bibitem{Artin:1927}
E.~Artin, {\"U}ber die {Z}erlegung definiter {F}unktionen in {Q}uadrate,
  \emph{Abh. Math. Sem. Univ. Hamburg} \textbf{5} (1926) 100--115, Collected
  Papers \cite{Artin:Papers}, pp. 273--288.

\bibitem{Artin:Papers}
\bysame, \emph{Collected {P}apers}, Edited by S. Lang and J. T. Tate,
  Springer-Verlag, New York, 1982, Reprint of the 1965 original.

\bibitem{ArtinSchreier:1926}
E.~Artin and O.~Schreier, Algebraische {K}onstruktion reeller {K}{\"o}rper,
  \emph{Abh. Math. Sem. Univ. Hamburg} \textbf{5} (1926) 85--99, Collected
  Papers \cite{Artin:Papers}, pp. 258--272.

\bibitem{ArtinSchreier:1927}
\bysame, Eine {K}ennzeichnung der reell abgeschlossenen {K}{\"o}rper,
  \emph{Abh. Math. Sem. Univ. Hamburg} \textbf{5} (1927) 225--231, Collected
  Papers \cite{Artin:Papers}, pp. 289--295.

\bibitem{BasuPollakRoy:2006}
S.~Basu, R.~Pollack, and M.-F. Roy, \emph{Algorithms in real algebraic
  geometry}, second ed., Springer-Verlag, Berlin, 2006, Available at
  \link{perso.univ-rennes1.fr/marie-francoise.roy}.

\bibitem{SmaleEtAl:1998}
L.~Blum, F.~Cucker, M.~Shub, and S.~Smale, \emph{Complexity and real
  computation}, Springer-Verlag, New York, 1998.

\bibitem{BochnakCosteRoy:1998}
J.~Bochnak, M.~Coste, and M.-F. Roy, \emph{Real algebraic geometry},
  Springer-Verlag, Berlin, 1998.

\bibitem{Cauchy:1831}
A.-L. Cauchy, Sur les rapports qui existent entre le calcul des r{\'e}sidus et
  le calcul des limites, \emph{Bulletin des Sciences de F{\'e}russac}
  \textbf{16} (1831) 116--128, {\OE}uvres \cite{Cauchy:Oeuvres}, S{\'e}rie 2,
  tome 2, pp. 169--183.

\bibitem{Cauchy:1837}
\bysame, Calcul des indices des fonctions, \emph{Journal de l'{\'E}cole
  Polytechnique} \textbf{15} (1837) 176--229, {\OE}uvres \cite{Cauchy:Oeuvres},
  S{\'e}rie 2, tome 1, pp. 416--466.

\bibitem{Cauchy:Oeuvres}
\bysame, \emph{{\OE}uvres compl{\`e}tes}, Gauthier-Villars, Paris, 1882--1974,
  Available at \link{mathdoc.emath.fr/OEUVRES/}.

\bibitem{Cohn:2003}
P.~M. Cohn, \emph{Basic algebra}, Springer-Verlag, London, 2003.

\bibitem{ConstructiveAspects:1969}
B.~Dejon and P.~Henrici, eds., \emph{Constructive aspects of the fundamental
  theorem of algebra}, John Wiley \& Sons Inc., London, 1969.

\bibitem{Dieudonne:1978}
J.~Dieudonn\'e, \emph{Abr\'eg\'e d'histoire des math\'ematiques. 1700--1900.},
  Hermann, Paris, 1978.

\bibitem{EbbinghausEtAl:1991}
H.-D. Ebbinghaus, H.~Hermes, F.~Hirzebruch, M.~Koecher, K.~Mainzer,
  J.~Neukirch, A.~Prestel, and R.~Remmert, \emph{Numbers}, Graduate Texts in
  Mathematics, vol. 123, Springer-Verlag, New York, 1991.

\bibitem{FineRosenberger:1997}
B.~Fine and G.~Rosenberger, \emph{The fundamental theorem of algebra},
  Undergraduate Texts in Mathematics, Springer-Verlag, New York, 1997.

\bibitem{Fuller:1975}
A.~T. Fuller, ed., \emph{Stability of motion}, Taylor \& Francis, Ltd., London,
  1975, A collection of early scientific publications by E. J. Routh, W. K.
  Clifford, C. Sturm and M. B{\^o}cher.

\bibitem{GathenGerhard:2003}
J.~v.~z. Gathen and J.~Gerhard, \emph{Modern computer algebra}, second ed.,
  Cambridge University Press, Cambridge, 2003.

\bibitem{Gauss:Werke}
C.~F. Gau{\ss}, \emph{Werke. {B}and {I}--{XII}}, Georg Olms Verlag, Hildesheim,
  1973, Reprint of the 1863--1929 original, available at
  \link{resolver.sub.uni-goettingen.de/purl?PPN235993352}.

\bibitem{GerstenStallings:1988}
S.~M. Gersten and J.~R. Stallings, On {G}auss's first proof of the fundamental
  theorem of algebra, \emph{Proc. Amer. Math. Soc.} \textbf{103} (1988)
  331--332.

\bibitem{Gonthier:2008}
G.~Gonthier, The four colour theorem, \emph{Notices Amer. Math. Soc.}
  \textbf{55} (2008) 1382--1393, Available at
  \link{www.ams.org/notices/200811}.

\bibitem{Hales:2007}
T.~C. Hales, The {J}ordan curve theorem, formally and informally, \emph{Amer.
  Math. Monthly} \textbf{114} (2007) 882--894.

\bibitem{Henrici:1974}
P.~Henrici, \emph{Applied and computational complex analysis}, vol.~1, John
  Wiley \& Sons Inc., New York, 1974.

\bibitem{Hurwitz:1895}
A.~Hurwitz, Ueber die {B}edingungen, unter welchen eine {G}leichung nur
  {W}urzeln mit negativen reellen {T}heilen besitzt, \emph{Math. Ann.}
  \textbf{46} (1895) 273--284, Math. Werke \cite{Hurwitz:Werke}, Band 2, pp.
  533--545. Reprinted in \cite{Fuller:1975}.

\bibitem{Hurwitz:Werke}
\bysame, \emph{Mathematische {W}erke}, Birkh{\"a}user, Basel, 1962--1963.

\bibitem{Jacobson:1985:1989}
N.~Jacobson, \emph{Basic algebra {I}-{II}}, second ed., W. H. Freeman and
  Company, New York, 1985, 1989.

\bibitem{Kasner:1905}
E.~Kasner, The present problems of geometry, \emph{Bull. Amer. Math. Soc.}
  \textbf{11} (1905) 283--314.

\bibitem{Kronecker:1869}
L.~Kronecker, Ueber {S}ysteme von {F}unctionen mehrer {V}ariabeln,
  \emph{Monatsberichte Akademie Berlin} (1869) 159--193, 688--698, Werke
  \cite{Kronecker:Werke}, Band {I}, pp. 175--226.

\bibitem{Kronecker:Werke}
\bysame, \emph{{W}erke}, Chelsea Publishing Co., New York, 1968, Reprint of the
  1895--1930 original.

\bibitem{Lang:2002}
S.~Lang, \emph{Algebra}, third ed., Graduate Texts in Mathematics, vol. 211,
  Springer-Verlag, New York, 2002.

\bibitem{Lehmer:1961}
D.~H. Lehmer, A machine method for solving polynomial equations, \emph{J.
  Assoc. Comput. Mach.} \textbf{8} (1961) 151--162.

\bibitem{Lehmer:1969}
\bysame, Search procedures for polynomial equation solving, in
  \emph{Constructive aspects of the fundamental theorem of algebra
  \cite{ConstructiveAspects:1969}}, John Wiley \& Sons Inc., 1969, 193--208.

\bibitem{LickteigRoy:2001}
T.~Lickteig and M.-F. Roy, Sylvester-{H}abicht sequences and fast {C}auchy
  index computation, \emph{J. Symbolic Comput.} \textbf{31} (2001) 315--341.

\bibitem{Loria:1938}
G.~Loria, {C}harles {S}turm et son {\oe}uvre math{\'e}matique, \emph{Enseign.
  Math.} \textbf{37} (1938) 249--274.

\bibitem{Mahler:1964}
K.~Mahler, An inequality for the discriminant of a polynomial, \emph{Michigan
  Math. J.} \textbf{11} (1964) 257--262.

\bibitem{Marden:1966}
M.~Marden, \emph{Geometry of polynomials}, Second edition. Mathematical
  Surveys, No. 3, Amer. Math. Soc., Providence, R.I., 1966.

\bibitem{EncyklMathWiss}
W.~F. Meyer, ed., \emph{Encyklop{\"a}die der mathematischen {W}issenschaften},
  B. G. Teubner, Leipzig, 1898.

\bibitem{EncyclSciMath}
J.~Molk, ed., \emph{Encyclop{\'e}die des {S}ciences {M}ath{\'e}matiques},
  Gauthier-Villars, Paris, 1907.

\bibitem{Ostrowski:1920}
A.~Ostrowski, \emph{{\"U}ber den ersten und vierten {G}ausschen {B}eweis des
  {F}undamentalsatzes der {A}lgebra}, vol. X.2, ch.~3 in \cite{Gauss:Werke},
  Georg Olms Verlag, 1920, Collected Papers \cite{Ostrowski:Works}, vol. 1, pp.
  538--553.

\bibitem{Ostrowski:1933}
\bysame, {\"U}ber {N}ullstellen stetiger {F}unktionen zweier {V}ariablen,
  \emph{J. Reine Angew. Math.} \textbf{170} (1933) 83--94, Collected Papers
  \cite{Ostrowski:Works}, vol. 3, pp. 269--280.

\bibitem{Ostrowski:Works}
\bysame, \emph{Collected {M}athematical {P}apers}, Birkh{\"a}user, Basel, 1983.

\bibitem{Pan:1998}
V.~Y. Pan, Solving polynomials with computers, \emph{American Scientist}
  \textbf{86} (1998) 62--69.

\bibitem{RahmanSchmeisser:2002}
Q.~I. Rahman and G.~Schmeisser, \emph{Analytic theory of polynomials}, London
  Mathematical Society Monographs. New Series, vol.~26, Oxford University
  Press, Oxford, 2002.

\bibitem{Remmert:1991}
R.~Remmert, \emph{The Fundamental Theorem of Algebra}, ch.~4 in
  \cite{EbbinghausEtAl:1991}, Springer-Verlag, New York, 1991.

\bibitem{Routh:1878}
E.~J. Routh, \emph{A treatise on the stability of a given state of motion},
  Macmillan, London, 1878, Reprinted in \cite{Fuller:1975}, pp. 19--138.

\bibitem{Roy:1996}
M.-F. Roy, Basic algorithms in real algebraic geometry and their complexity:
  from {S}turm's theorem to the existential theory of reals, in \emph{Lectures
  in real geometry (Madrid, 1994)}, de Gruyter Exp. Math., vol.~23, de Gruyter,
  Berlin, 1996, 1--67.

\bibitem{RoySzpirglas:1990}
M.-F. Roy and A.~Szpirglas, Complexity of computation on real algebraic
  numbers, \emph{J. Symbolic Comput.} \textbf{10} (1990) 39--51.

\bibitem{Rudin:1976}
W.~Rudin, \emph{Principles of mathematical analysis}, third ed., McGraw-Hill
  Book Co., New York, 1976.

\bibitem{Rump:2003}
S.~M. Rump, Ten methods to bound multiple roots of polynomials, \emph{J.
  Comput. Appl. Math.} \textbf{156} (2003) 403--432.

\bibitem{Schoenhage:1982}
A.~Sch{\"o}nhage, The fundamental theorem of algebra in terms of computational
  complexity, Preliminary report, Math. Inst. Univ. T{\"u}bingen, T{\"u}bingen,
  Germany, 1982, 49 pages, available at
  \link{www.informatik.uni-bonn.de/~schoe/fdthmrep.ps.gz}.

\bibitem{Schoenhage:1986}
\bysame, Equation solving in terms of computational complexity, in \emph{Proc.
  Int. Congress of Math., Berkeley, 1986}, vol.~1, Amer. Math. Soc., 1987,
  131--153.

\bibitem{Serret:1866}
J.~A. Serret, \emph{Cours d'alg{\`e}bre sup{\'e}rieure}, 3rd ed.,
  Gauthier-Villars, Paris, 1866, Available at
  \link{www.archive.org/details/coursdalgbresup06serrgoog}. (4th ed.\ 1877, 5th
  ed.\ 1885.).

\bibitem{Sinaceur:1991}
H.~Sinaceur, \emph{Corps et {M}od{\`e}les}, Librairie Philosophique J. Vrin,
  Paris, 1991, Translated as \cite{Sinaceur:2008}.

\bibitem{Sinaceur:2008}
\bysame, \emph{Fields and {M}odels}, Birkh{\"a}user, Basel, 2008.

\bibitem{Sturm:1829}
C.-F. Sturm, M{\'e}moire sur la r{\'e}solution des {\'e}quations
  num{\'e}riques, \emph{Bulletin des Sciences de F{\'e}russac} \textbf{11}
  (1829) 419--422, Collected Works \cite{Sturm:Works}, pp. 323--326.

\bibitem{Sturm:1835}
\bysame, M{\'e}moire sur la r{\'e}solution des {\'e}quations num{\'e}riques,
  \emph{Acad{\'e}mie Royale des Sciences de l'Institut de France} \textbf{6}
  (1835) 271--318, Collected Works \cite{Sturm:Works}, pp. 345--390.

\bibitem{Sturm:1836}
\bysame, Autres d{\'e}monstrations du m{\^e}me th{\'e}or{\`e}me, \emph{J. Math.
  Pures Appl.} \textbf{1} (1836) 290--308, Collected Works \cite{Sturm:Works},
  pp. 486--504, English translation in \cite{Fuller:1975}, pp. 189--207.

\bibitem{Sturm:Works}
\bysame, \emph{Collected {W}orks}, Edited by J.-C. Pont, Birkh{\"a}user, Basel,
  2009, Some of the articles are also available at
  \link{www-mathdoc.ujf-grenoble.fr/pole-bnf/Sturm.html}.

\bibitem{SturmLiouville:1836}
C.-F. Sturm and J.~Liouville, D{\'e}monstration d'un th{\'e}or{\`e}me de {M}.
  {C}auchy, relatif aux racines imaginaires des {\'e}quations, \emph{J. Math.
  Pures Appl.} \textbf{1} (1836) 278--289, Collected Works \cite{Sturm:Works},
  pp. 474--485.

\bibitem{Sylvester:1840}
J.~J. Sylvester, A method of determining by mere inspection the derivatives
  from two equations of any degree, \emph{Philosophical Magazine} \textbf{16}
  (1840) 132--135, Collected Papers \cite{Sylvester:Works}, vol. I, pp. 54--57.

\bibitem{Sylvester:Works}
\bysame, \emph{Collected {M}athematical {P}apers}, Cambridge University Press,
  Cambridge, 1904--1912.

\bibitem{Turing:1936}
A.~M. Turing, On computable numbers, with an application to the
  {E}ntscheidungsproblem, \emph{Proc. Lond. Math. Soc., II. Ser.} \textbf{42}
  (1936) 230--265, Collected Works \cite{Turing:Works}, vol. IV, pp. 18--56.

\bibitem{Turing:Works}
\bysame, \emph{Collected {W}orks}, North-Holland Publishing Co., Amsterdam,
  1992.

\bibitem{Ullherr:1846}
J.~C. Ullherr, Zwei {B}eweise f{\"u}r die {E}xistenz der {W}urzeln der
  h{\"o}hern algebraischen {G}leichungen, \emph{J. Reine Angew. Math.}
  \textbf{31} (1846) 231--234.

\bibitem{vanDerWaerden:1985}
B.~L. van~der Waerden, \emph{A history of algebra}, Springer-Verlag, Berlin,
  1985.

\bibitem{Weber:1898}
H.~Weber, \emph{Lehrbuch der {A}lgebra}, second ed., vol.~1, F. Vieweg \& Sohn,
  Braunschweig, 1898, Reprint: Chelsea Pub Co, New York, 3rd edition, January
  2000.

\bibitem{Weyl:1921}
H.~Weyl, {\"U}ber die neue {G}rundlagenkrise der {M}athematik. ({V}ortr\"age
  gehalten im mathematischen {K}olloquium {Z}\"urich), \emph{Math. Z.}
  \textbf{10} (1921) 39--79, Ges. Abh. \cite{Weyl:Werke}, Band II, pp.
  143--180.

\bibitem{Weyl:1924}
\bysame, Randbemerkungen zu {H}auptproblemen der {M}athematik, {II}.
  {F}undamentalsatz der {A}lgebra und {G}rundlagen der {M}athematik,
  \emph{Math. Z.} \textbf{20} (1924) 131--150, Ges. Abh. \cite{Weyl:Werke},
  Band II, pp. 433--453.

\bibitem{Weyl:Werke}
\bysame, \emph{Gesammelte {A}bhandlungen}, Springer-Verlag, Berlin, 1968.

\bibitem{Wiedijk:Top100}
F.~Wiedijk, Formalizing 100 theorems, \link{www.cs.ru.nl/~freek/100}, accessed
  25/03/2012.

\bibitem{Wilf:1978}
H.~S. Wilf, A global bisection algorithm for computing the zeros of polynomials
  in the complex plane, \emph{J. Assoc. Comput. Mach.} \textbf{25} (1978)
  415--420.

\bibitem{Wilkinson:1959}
J.~H. Wilkinson, The evaluation of the zeros of ill-conditioned polynomials,
  \emph{Numer. Math.} \textbf{1} (1959) 150--180.

\end{thebibliography}

\end{document}